\newtheorem{theorem}{Theorem}
\newtheorem{proposition}[theorem]{Proposition}
\newtheorem{corollary}[theorem]{Corollary}
\newtheorem{lemma}[theorem]{Lemma}
\begin{document}
\title[Conformal invariance in 2-d critical bond percolation]{Conformal invariance of the exploration path in 2-d critical bond percolation in the square lattice}
\author{Jonathan Tsai, \ \ S. C. P. Yam, \ \ Wang Zhou \\}

%\thanks{}

\address{Department of Mathematics, the University of Hong Kong}
\email{jtsai@maths.hku.hk}

\address{Department of Statistics,
the Chinese University of Hong Kong,
Shatin, New Territories, Hong Kong}
\email{scpyam@sta.cuhk.edu.hk}

\address{Department of Statistics and Applied Probability, National University of
 Singapore, Singapore 117546}
\email{stazw@nus.edu.sg}

\subjclass{82B27, 60K35, 82B43, 60D05, 30C35} \keywords{conformal invariance, percolation, SLE, square lattice, Schwarz-Christoffel transformation, Young's integration, infinite divisibility}

\begin{abstract}
In this paper we present the proof of the convergence of the critical bond percolation exploration process on the square lattice to the trace of SLE$_{6}$. This is an important conjecture in mathematical physics and probability. The case of critical site percolation on the hexagonal lattice was established in the seminal work of Smirnov in \cite{smirnov01a,smirnov01b} via proving Cardy's formula.  Our proof uses a series of transformations and conditioning to construct a pair of paths: the $+\partial$CBP and the $-\partial$CBP. The convergence in the site percolation case on the hexagonal lattice allows us to obtain certain estimates on the scaling limit of the $+\partial$CBP and the $-\partial$CBP. By considering a path which is the concatenation of $+\partial$CBPs and $-\partial$CBPs in an alternating manner, we can prove the convergence in the case of bond percolation on the square lattice.\end{abstract}

\maketitle

\section{Introduction}

Percolation theory, going back as far as Broadbent and
Hammersley \cite{BH57},  describes the flow of fluid in a porous medium  with stochastically blocked channels.
In terms of mathematics, it consists in removing each edge (or each vertex) in a lattice with a given probability $p$. In these days, it
has become part of the mainstream in probability and statistical physics. One can refer to Grimmett's book~\cite{Grim} for more background.
Traditionally, the study of percolation was concerned with the critical probability that is with respect to the question of whether or not there exists an infinite
open cluster -- bond percolation on the square lattice
and site percolation on the hexagonal lattice are critical for $p = 1/2$. This tradition is due to many reasons. One originates from physics: at the critical probability, a phase transition occurs. Phase
transitions are among the most striking phenomena in physics. A small change in an environmental parameter,
such as the temperature or the external magnetic field, can induce huge changes in the macroscopic properties of a system.
Another one is from mathematics: the celebrated `conformal invariance' conjecture of Aizenman and
Langlands, Pouliotthe and Saint-Aubin~\cite{LPS94} states that the probabilities of some macroscopic events have conformally invariant limits at criticality which
turn out to be very helpful in understanding discrete systems.
This conjecture was expressed in another form by Schramm~\cite{sch00}. In his seminal paper, Schramm introduced the percolation exploration
path which separates macroscopic open clusters from closed ones and conjectured that this path converges to his conformally invariant
Schramm-Loewner evolution (SLE) curve as the mesh size of the lattice goes to zero.

For critical site percolation on the hexagonal lattice, Smirnov \cite{smirnov01a,smirnov01b} proved the conformal invariance of the scaling limit of
crossing probabilities given by Cardy's formula. Later on, a detailed proof of the convergence of the critical site percolation
exploration path to SLE$_6$ was provided by Camia and Newman \cite{CN07}.
This allows one to use the SLE machinery \cite{LSW01a,LSW01b} to
obtain new interesting properties of critical site percolation, such as the values of some critical
exponents which portray the limiting behavior of the probabilities of certain
exceptional events (arm exponents) \cite{LSW02, SW01}. For a review one can refer to \cite{werner07}.

Usually a slight move in one part may affect the whole situation. %The castle will fall down quickly if one of its walls is broke.
But there is still no proof of convergence of the critical percolation
exploration path on general lattices, especially the square lattice, to SLE$_6$. The reason is that the proofs in the site percolation on the hexagonal lattice case depend heavily on the particular properties of the hexagonal lattice.

However much progress has been made in recent years, thanks to SLE,
in understanding the geometric and topological properties of (the scaling limit of)
large discrete systems.
Besides the percolation exploration path on the triangular lattice, many random self-avoiding lattice paths from the statistical
physics literature are proved to have  SLE as scaling limits, such as
loop erased random walks and uniform spanning tree Peano paths
\cite{LSW04}, the harmonic explorer's path~\cite{SS05}, the level lines of the discrete Gaussian free field~\cite{SS09},
the interfaces of the FK Ising model~\cite{smirnov10}.

In this paper we shall prove the convergence of the exploration path of the critical bond percolation on the square lattice (which is an interface between open and closed edges after certain boundary conditions have been applied -- see Figure \ref{figg1} and \ref{figg2}) to the trace of SLE$_{6}$ and as a consequence the conformal invariance of the scaling limit is established. Let $D$ be a domain in $\mathbb{C}$ and $a,b\in\partial D$. First we consider the following metric on curves from $a$ to $b$ in $D$:
\begin{equation}\rho( \nu_{1},\nu_{2})=\inf_{\sigma}\sup_{t\in[0,1]} |\nu_{1}(t)-\nu_{2}(\sigma(t))|, \label{metric} \end{equation}
\pagebreak
\begin{figure}[hp]
 \begin{center}
\scalebox{1}{\includegraphics{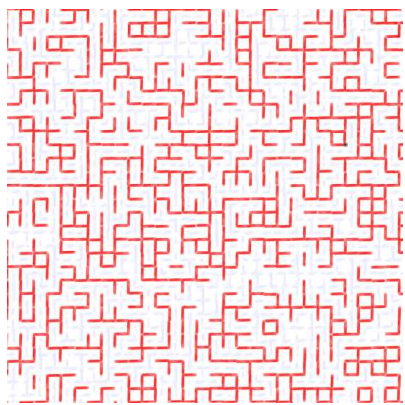}}
 \end{center}
\caption{Bond percolation on the square lattice. The open edges are marked in red.} \label{figg1}
\end{figure}
\begin{figure}[hp]
 \begin{center}
\scalebox{1}{\includegraphics{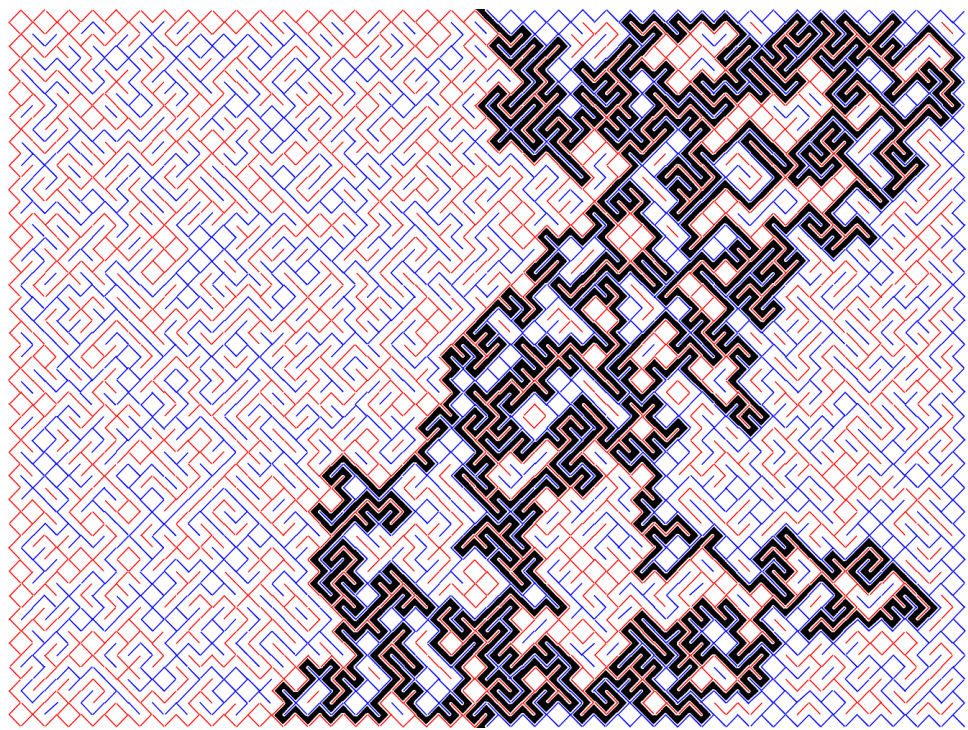}}
 \end{center}
\caption{The bond percolation exploration process lies in the ``corridor'' between the red edges and blue edges marked in black.} \label{figg2}
\end{figure}

\noindent
where $\nu_{1}[0,1]$, $\nu_{2}[0,1]$ are any two curves from $a$ to $b$ in $D$ and the infimum is taken over all reparamaterizations $t\mapsto\sigma(t)$. Here $\sigma:[0,1]\rightarrow [0,1]$ is a continuous surjective non-decreasing function.

\begin{theorem}\label{main}
The critical bond percolation exploration path on the square lattice converges in distribution in the metric given by (\ref{metric}) to the trace of SLE$_{6}$ as the mesh size of the lattice tends to zero.
\end{theorem}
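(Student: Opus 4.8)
The plan is to reduce the square-lattice bond percolation problem to Smirnov's theorem on the hexagonal (triangular) lattice by a careful geometric construction. First I would set up the discrete objects precisely: given a Jordan domain $D$ with marked boundary prime ends $a,b$, take the bond percolation configuration on $\delta\mathbb{Z}^2\cap D$ with the boundary conditions of Figures \ref{figg1} and \ref{figg2}, and define the exploration path $\gamma_\delta$ as the interface running in the ``corridor'' between the forced-open (red) boundary arc and the forced-closed (blue) boundary arc. The key structural observation is that the medial graph of $\mathbb{Z}^2$ is again a square lattice, and a bond configuration induces a colouring of the faces; the exploration interface is thus a path on this medial lattice. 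The difficulty, and the reason this is not immediate, is that the square lattice's self-duality is \emph{not} the same as the matching property that makes the hexagonal/triangular case work — on the triangular lattice the interface is automatically a simple path with a clean Markov/locality property, whereas here one must extract an analogous object.

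The heart of the argument, following the abstract, is to build from $\gamma_\delta$ a pair of auxiliary paths, the $+\partial$CBP and the $-\partial$CBP, obtained by a sequence of transformations and conditionings. Concretely I would: (i) decompose the exploration into excursions determined by which side of the corridor is currently being ``hugged'', (ii) show that each excursion, after an appropriate conformal change of coordinates (this is where the Schwarz–Christoffel transformation enters — it straightens the polygonal corridor boundary), is distributed like a piece of a \emph{site} percolation exploration on the hexagonal lattice, to which Smirnov's convergence and Cardy's formula apply, and (iii) record the resulting scaling-limit estimates: tightness of each excursion, convergence of its law to the corresponding SLE$_6$-type object, and quantitative control (via the percolation arm exponents from \cite{LSW02,SW01}) on the excursion sizes so that only finitely many matter at macroscopic scale. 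The infinite-divisibility mentioned in the keywords presumably enters here: the excursion structure should exhibit the scaling limit as an infinitely divisible ``sum'' of independent pieces, which pins down its law. Young's integration would be the technical device for passing to the limit in the concatenation, since the limiting path has finite $p$-variation for $p$ slightly above $4/3$ (the SLE$_6$ regularity), and one wants the concatenation operation to be continuous in a variation topology rather than merely in the uniform metric $\rho$.

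Having the convergence of each $+\partial$CBP and $-\partial$CBP to explicit SLE$_6$-flavoured limits, the final step is to reassemble: the true exploration path $\gamma_\delta$ is the alternating concatenation of $+\partial$CBPs and $-\partial$CBPs, so I would argue that (a) this family is tight in the metric $\rho$ of \eqref{metric} — using a standard equicontinuity-of-crossings criterion, with the needed annulus-crossing estimates supplied by the hexagonal-lattice input — and (b) every subsequential limit, by the excursion decomposition and the target-independence/locality of SLE$_6$, satisfies the conformal Markov property with the right $\kappa$, hence by Schramm's characterization equals the trace of SLE$_6$. Combining tightness with uniqueness of the subsequential limit gives convergence in distribution, which is Theorem \ref{main}.

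I expect the main obstacle to be step (ii) above: rigorously identifying the law of a single corridor-excursion (a $+\partial$CBP or $-\partial$CBP) with that of a hexagonal-lattice site percolation interface, \emph{uniformly in the domain shape}. The matching between the two lattices is only approximate at the discrete level, the conformal (Schwarz–Christoffel) maps distort the mesh non-uniformly near the corners of the corridor, and one must show these distortions are negligible in the scaling limit while still controlling the conditioning that defines the $\pm\partial$CBPs. Handling the boundary/corner behaviour — proving no macroscopic mass of the exploration concentrates there, so that the Cardy's-formula input can be transported — is where the bulk of the technical work, and the Young-integration machinery, will be needed.
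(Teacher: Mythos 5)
There is a genuine gap at the heart of your plan, namely step (ii): the claim that each corridor excursion of the square-lattice exploration path is, after a conformal change of coordinates, distributed like a piece of the hexagonal-lattice site percolation interface, so that Smirnov's theorem can be applied to it directly. This is false, and the paper's construction runs in the opposite direction. The paper starts from the hexagonal exploration path, deforms the hexagonal lattice into a (shifted) brick-wall lattice to obtain the $\pm$BP, and then \emph{conditions} that path twice (first not to go straight at free vertices, giving the $\pm$CBP, then not to go straight at any vertex, giving the $\pm\partial$CBP). The square-lattice interface is then realised as an alternating concatenation of $+\partial$CBPs and $-\partial$CBPs. Because of the conditioning, the law of a $\pm\partial$CBP is \emph{not} that of a hexagonal interface, and no direct appeal to Smirnov/Cardy or to Schramm's characterization is available for the pieces; the paper only ever proves that the Loewner driving function of the conditioned paths converges subsequentially to an $\epsilon$-semimartingale, and the identification of the limit as $\sqrt{6}B_{t}$ happens only at the very end, via the locality property together with an infinite-divisibility argument applied to the \emph{driving process} (its increments become i.i.d. in the limit by the domain Markov property, so L\'evy--Khintchine forces a Brownian martingale part and a drift, which symmetry kills). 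Your proposal has no mechanism for controlling the effect of the conditioning on the law, which is where essentially all of the technical work of the paper (Sections 6--9) lives.

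Two further misattributions are worth flagging because they would derail an attempt to execute your plan. The Schwarz--Christoffel formula is not used to ``straighten the corridor''; it is used to derive an exact expression for the discrete Loewner driving function of a lattice path, $\xi_{t}=\tfrac12\bigl[a_{1}(t)+b_{1}(t)+\sum_{k}L_{k}(a_{k}(t)-b_{k}(t))\bigr]$ plus boundary terms, in terms of the turning sequence $(L_{k})$ and the preimages of the vertices; all subsequent estimates (winding bounds, telescoping, the strengthened Kolmogorov--Centsov theorem) are made on this formula. Likewise, Young integration enters not to concatenate paths of finite $p$-variation but because the limiting driving function is an $\epsilon$-semimartingale, i.e.\ a local martingale plus a finite $(1+\epsilon)$-variation part, and the change-of-domain computation (It\^{o}'s formula applied to $\Phi_{t}^{\mathbf{D},\delta}$) requires integrating smooth functions against that finite-variation part. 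Finally, your closing step (conformal Markov property plus Schramm's characterization) is a different route from the paper's (driving-term convergence to $\sqrt{6}B_{t}$ upgraded to curve convergence via Sheffield--Sun or arm estimates), but that difference is secondary to the gap above.
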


The idea of the proof is as follows: By replacing the hexagonal sites in the hexagonal lattice with rectangular sites and then shifting each row left and right in an alternating manner, we can change the hexagonal lattice into a rectangular lattice. The site percolation exploration path on the hexagonal lattice then induces a pair of paths on the rectangular lattice: the $+$BP and $-$BP. The $-$BP is the reflection of the $+$BP across the $y$-axis. By construction of the lattice modification, the $+$BP and $-$BP lie in a $2\delta$ neighbourhood of the site percolation exploration path and hence, in particular, both converge to SLE$_{6}$ in the scaling limit. We call a vertex of the $+$BP path (respectively the $-$BP path) where the path has 3 choices for the next vertex \emph{free}. The free vertices are precisely the vertices where the next step is not blocked by vertices that the path has previously visited (or the boundary).

We then condition the $+$BP (respectively the  $-$BP) not to go in the same direction for two consecutive edges at the  free vertices only and call the conditioned path the $+$CBP (respectively the $-$CBP). We then condition further such that the $+$CBP (respectively the $-$CBP) does not go in the same direction for two consecutive edges at all vertices and call the conditioned path the $+\partial$CBP (respectively the $-\partial$CBP).

  It turns out that the bond percolation exploration path on the square lattice squashed to a path on the rectangular lattice can be constructed by alternating pastings of $+\partial$CBP and $-\partial$CBP paths; moreover, the $+\partial$CBP (respectively the $-\partial$CBP) can be coupled with a sequence of $+$CBP paths (respectively $-$CBP paths) such that their Loewner driving functions are close.  This means that it is sufficient to study the $+$CBP.

Indeed, we shall show that the driving function of the $+$CBP converges subsequentially to an \emph{$\epsilon$-semimartingale}: the sum of a local martingale and a finite $(1+\epsilon)$-variation process for all $\epsilon>0$ sufficiently small.  The idea of this part of the proof is as follows: For simplicity, let us consider the upper half-plane $\mathbb{H}=\{z\in\mathbb{C}:\mathrm{Im}[z]>0\}$ in the complex plane and the $+$CBP path from $0$ to $\infty$ on the lattice of mesh-size $\delta$. For clarity purposes, we hide the dependence on $\delta$ in the following notation. Let $\gamma:[0,\infty)\rightarrow \overline{\mathbb{H}}$ be the path parameterized by the half-plane capacity and let $g_{t}$ be the associated conformal mappings $g_{t}:\mathbb{H}\setminus\gamma(0,t]\rightarrow \mathbb{H}$ that are hydrodynamically normalized. Then using a result in \cite{tsai09}, we can write the Loewner driving function of the path $\gamma$ as
\begin{equation} \xi_{t}=\frac{1}{2}\big[a_{1}(t)+b_{1}(t)+\sum_{k=2}^{N(t)}L_{k}(a_{k}(t)-b_{k}(t))\big],\label{wweq6}\end{equation}
where for each $k$, $a_{k}(t)>b_{k}(t)$ are the preimages of the $k$th vertex of the path $\gamma$ under the conformal mapping $g_{t}^{-1}$; $L_{k}$ is $-1$ if the curve turns left at the $k$th step and +1 if the curve turns right at the $k$th step; and $N(t)$ is the number of the vertices on the path $\gamma[0,t]$.  Let $t_{0},t_{1},t_{2},t_{3},\ldots$ denote the times at which the curve $\gamma$ is at each vertex of the path. We also choose $0=m_{0}<m_{1}<m_{2}<\ldots$ random steps adapted to the process.

Then if we let $M_{n}\triangleq \xi_{t_{m_{n}}}$, we have
\begin{equation}
M_{n}-M_{n-1}
 =R_{n-1}(t_{m_{n}})-R_{n-1}(t_{m_{n-1}})+ \frac{1}{2}\sum_{k=m_{n-1}+1}^{m_{n} } L_{k} \big( a_{k}(t_{m_{n}})-b_{k}(t_{m_{n}}) \big),
\label{qqeq2}\end{equation}
where
\begin{equation}R_{n-1}(t)=\frac{1}{2}\big[ a_{1}(t)+b_{1}(t)+\sum_{k=2}^{m_{n-1}} L_{k}(a_{k}(t)-b_{k}(t)) \big] .\label{qqeq7}\end{equation}
Since $a_{m_n}(t_{m_{n}})=b_{m_n}(t_{m_{n}})$ we can write
\[ a_{k}(t_{m_{n}})-b_{k}(t_{m_{n}})=\sum_{j=k}^{m_{n}-1} \Delta_{j,n} , \]
where
\[\Delta_{j,n}= \left[ (a_{j}(t_{m_{n}})-a_{j+1}(t_{m_{n}}))-(b_{j}(t_{m_{n}})-b_{j+1}(t_{m_{n}})\right] . \]
Then, we can telescope the sum in (\ref{qqeq2}) and take conditional expectations to get
\begin{eqnarray}
\mathbb{E}\left[M_{n}-M_{n-1} | \mathcal{F}_{m_{n-1}}\right]
 &=&\mathbb{E}\left[R_{n-1}(t_{m_{n}})-R_{n-1}(t_{m_{n-1}})|\mathcal{F}_{m_{n-1}}\right] \nonumber \\
 &&\quad \quad + \frac{1}{2}\sum_{j=m_{n-1}+1}^{m_{n}-1 }\mathbb{E}\big[  \Delta_{j,n}\sum_{k=m_{n-1}+1}^{j} L_{k}|\mathcal{F}_{m_{n-1}}\big].
\label{qqeq3} \end{eqnarray}
Using the convergence of the $+$BP path to SLE$_{6}$ and by a particular choice of $\{m_{n}\}$, we deduce that we can decompose for sufficiently small mesh-size $\delta$,
\[\mathbb{E}\big[  \Delta_{j,n}\sum_{k=m_{n-1}+1}^{j} L_{k}|\mathcal{F}_{m_{n-1}}\big]\approx\mathbb{E}\big[  \Delta_{j,n}\big]\mathbb{E}\big[\sum_{k=m_{n-1}+1}^{j} L_{k}|\mathcal{F}_{m_{n-1}}\big].\]
From the definition of $(L_{k})$, using a symmetry argument, one should be able to show that
\[\mathbb{E}\big[\sum_{k=m_{n-1}+1}^{j} L_{k}|\mathcal{F}_{m_{n-1}}\big]\approx 0.\]
(at least sufficiently far from the boundary). This would imply that
\[\mathbb{E}\left[M_{n}-M_{n-1} | \mathcal{F}_{m_{n-1}}\right]
\approx \mathbb{E}\left[R_{n-1}(t_{m_{n}})-R_{n-1}(t_{m_{n-1}})|\mathcal{F}_{m_{n-1}}\right].\]
Hence
\[M_{n}-\sum_{k=1}^{n} [R_{k-1}(t_{m_{k}})-R_{k-1}(t_{m_{k-1}})]\] is almost a martingale. By telescoping the sum in (\ref{qqeq7}), we can show that
\[\big|\sum_{k=1}^{n} [R_{k-1}(t_{m_{k}})-R_{k-1}(t_{m_{k-1}})]\big|\leq \mathcal{W}^{\delta}|A(t_{m_{k}})-A(t_{m_{k-1}})|+|B(t_{m_{k}})-B(t_{m_{k-1}})|\]
where $A$ and $B$ are finite variation processes and
\[\mathcal{W}^{\delta}=\max_{j=2,\ldots m_{n-1}}\big|\sum_{k=2}^{j}L_{k}\big|\]

Since the moments of $\mathcal{W}^{\delta}$ are all bounded, we can use a stronger version of the Kolmogorov-Centsov continuity theorem (contained in the Appendix) to show that
\[\sum_{k=1}^{n} R_{k-1}(t_{m_{k}})-R_{k-1}(t_{m_{k-1}})\]
is a finite $(1+\epsilon)$-variation process for all $\epsilon>0$ sufficiently small as $\delta\searrow 0$. Hence we should be able to embed $M_{n}$ into a
continuous time $\epsilon$-semimartingale $M_{t}$ so that $\xi_{t}$ should converge to $M_{t}$ as the mesh size $\delta\searrow 0$. Then the locality property of the scaling limit and an infinite divisibility argument can be used to show that we must have $M_{t}=\sqrt{6}B_{t}$ where $B_{t}$ is standard 1-dimensional Brownian motion.

Hence we have the driving term convergence of the bond percolation exploration process to SLE$_{6}$. We can then get the convergence of the path to the trace of SLE$_{6}$ either by considering the 4 and 5-arm percolation estimates (as in \cite{CN07}) or using the recent result of Sheffield and Sun \cite{SS10} and repeating a similar argument.

This paper is organized as follows:
\begin{enumerate}
\item[Section 2:] We present the notation to be used in this paper.
\item[Section 3:] We introduce the bond percolation exploration path on the square lattice.
\item[Section 4:] We discuss the lattice modification and the restriction procedure that will allow us to define the $\pm$CBP and $\pm\partial$CBP from the site percolation exploration path on the hexagonal lattice.
\item[Section 5:] We derive the formula for the driving function on lattices (i.e. the formula (\ref{wweq6}) above).
\item[Section 6:] We use the convergence of the site percolation exploration path to SLE$_{6}$ in order to obtain certain useful estimates.
\item[Section 7:] We obtain certain estimates for the driving function of the $+$CBP.
\item[Section 8:] We apply the estimates obtained in Section 5 and Section 6 to obtain the subsequential driving term convergence to an $\epsilon$-semimartingale of the $+$CBP. By coupling the $+$CBP with the $+\partial$CBP, we obtain the subsequential driving term convergence of the $+\partial$CBP to an $\epsilon$-semimartingale as well.
\item[Section 9:] Using the convergence obtained in Section 7, we deduce convergence of the driving term of the bond percolation exploration path on the square lattice to an  $\epsilon$-semimartingale. The locality property then implies, via an infinite divisibility argument, that the semimartingale must in fact be $\sqrt{6}B_{t}$.
\item[Section 10:] We discuss how we can obtain the curve convergence from the driving term convergence obtained in Section 9; hence proving Theorem \ref{main}.
\item[Appendix:] We prove a stronger version of the Kolmogorov-Centsov continuity theorem.
\end{enumerate}

\section*{Acknowledgments}
Jonathan Tsai and Phillip Yam acknowledge the financial support from The Hong Kong RGC GRF 502408. Phillip Yam also expresses his sincere gratitude to the hospitality of Hausdorff Center for Mathematics of the University of Bonn and Mathematisches Forschungsinstitut Oberwolfach (MFO) during the preparation of the present work.  Wang Zhou was partially supported by a grant R-155-000-116-112 at the National University of Singapore.
% The authors would like to express our sincere gratitude to Vincent Beffara for his many helpful %comments on this paper.

\section{Notation}\label{notation}
We consider ordered triples of the form $\mathbf{D}=(D,a,b)$ where $D\subsetneq \mathbb{C}$ is a simply-connected domain and $a,b,\in\partial D$ with $a\neq b$ such that $a,b$ correspond to unique prime-ends of $D$. We say such a triple is admissible. Let $\mathcal{D}$ be the set of all such triples. By the Riemann mapping theorem, for any $\mathbf{D}\in\mathcal{D}$ we can find a conformal map $\phi_{\mathbf{D}}$ of $\mathbb{H}$ onto $D$ with $\phi_{\mathbf{D}}(0)=a$ and $\phi_{\mathbf{D}}(\infty)=b$.

For a given lattice $\mathbb{L}$, we define $\mathcal{D}^{\mathbb{L}}$ such that for any $(D,a,b)\in\mathcal{D}^{\mathbb{L}}$, the boundary of $D$ is the union of vertices and edges of the lattice, and $a,b\in\partial D$ are vertices of the lattice such that there is a path on the lattice from $a$ to $b$ contained in $D$. %If $(D,a,b)\in\mathcal{D}^{\mathbb{L}}$, we say that $(D,a,b)$ is \emph{on %the lattice $\mathbb{L}$}.
  We say that a path $\Gamma$, from $a$ to $b$ in $D$ is a \emph{non-crossing path} if  it is the limit of a sequence of simple paths from $a$ to $b$ in $D$.

Consider $\mathbf{D}=(D,a,b)\in\mathcal{D}^{\mathbb{L}}$. Let $\nu$ be a simple path from $a$ to $b$ on the lattice $\mathbb{L}$. Then $\phi_{\mathbf{D}}^{-1}(\nu)$ is a path in $\mathbb{H}$ from $0$ to $\infty$. We define $\gamma:[0,\infty)\mapsto \mathbb{H}$ be the curve $\phi_{\mathbf{D}}^{-1}(\nu)$ such that $\gamma$ is parameterized by half-plane capacity (see \cite{lawlerbook}). Let $Z_{0}, Z_{1},\ldots, Z_{n}$ be the images under $\phi_{\mathbf{D}}^{-1}$ of the vertices of the path $\nu$. Then $Z_{k}$ is a point on the curve $\gamma(t)$. We denote the time corresponding to $Z_{k}$ by $t_{k}$ i.e. $\gamma(t_{k})=Z_{k}$.
\pagebreak
\begin{figure}[hp]
 \begin{center}
\scalebox{0.5}{\includegraphics{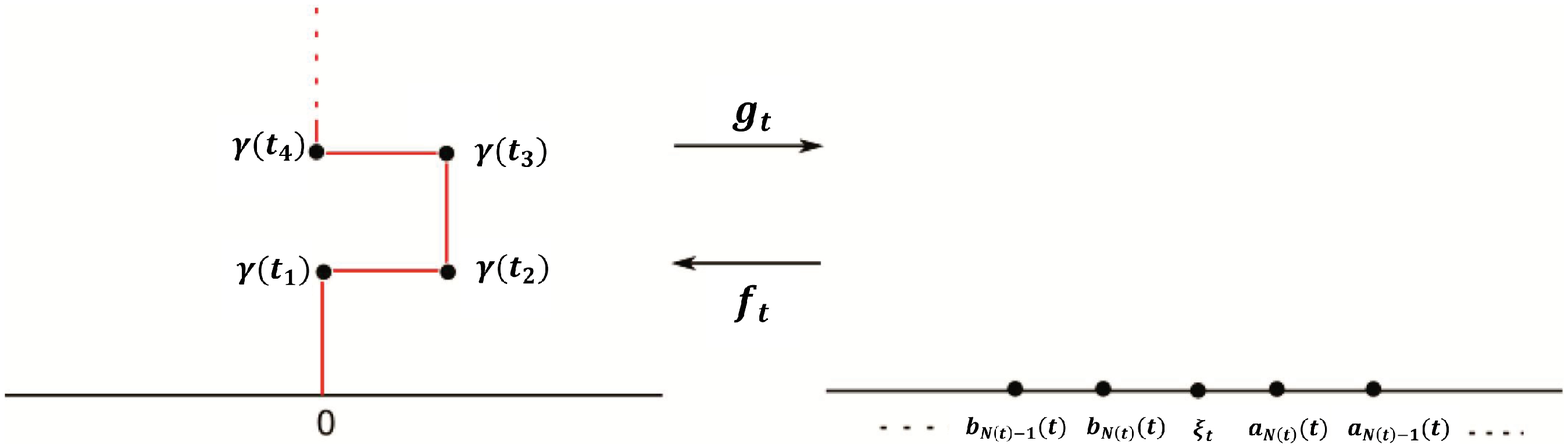}}
 \end{center}
\caption{} \label{nota1}
\end{figure}

Now suppose that $f_{t}$ is the conformal map of $\mathbb{H}$ onto $H_{t}=\mathbb{H}\setminus\gamma[0,t]$ satisfying the hydrodynamic normalization:
\begin{equation}f_{t}(z)=z-\frac{2t}{z}+O\big(\frac{1}{z^{2}}\big) \text{ as } z\rightarrow\infty.\label{zeq6}\end{equation}
The function $f_{t}$ satisfies the chordal Loewner differential equation \cite{lawlerbook}
\[\dot{f}_{t}(z)=-f_{t}'(z)\frac{2}{z-\xi(t)},\]
where $\xi(t)=f_{t}^{-1}(\gamma(t))$ is the chordal driving function. The inverse function $g_{t}=f_{t}^{-1}$ satisfies
\[\dot{g}_{t}(z)=\frac{2}{g_{t}(z)-\xi(t)}.\]

We define $N(t)$ to be the largest $k$ such that $t_{k}\leq t$. Then for $1\leq k\leq N(t)$, we define
$a_{k}(t)$ and $b_{k}(t)$ to be the two preimages of $\phi_{\mathbf{D}}^{-1}(Z_{k})$ under $f_{t}$ such that $b_{k}(t)\leq a_{k}(t)$ (see Figure \ref{nota1}).
Then since $a_{k}(t),b_{k}(t)$ are the images of $\phi_{\mathbf{D}}^{-1}(Z_{k})$ under $g_{t}$, they satisfy
\begin{equation}\dot{a}_{k}(t)=\frac{2}{a_{k}(t)-\xi(t)} \text{ and } \dot{b}_{k}(t)=\frac{2}{b_{k}(t)-\xi(t)} \label{zeq2}\end{equation}
for $t\in(t_{k},\infty)$.

\section{The bond percolation exploration path on the square lattice}\label{sect2}
We consider critical bond percolation on the square lattice $L$: between every two adjacent vertices, we add an edge between the vertices with probability $1/2$. Let $E$ be the collection of such edges. Consider the dual lattice of the square lattice by considering the vertices positioned at the centre of each square on the square lattice. Between two adjacent vertices on the dual lattice $L^{*}$, we add an edge to the dual lattice if there is no edge in $E$ separating the two vertices. Let $E^{*}$ be the collection of such edges on the dual lattice. We now rotate the original lattice and its dual lattice by $\pi/4$ radians anticlockwise about the origin. Note that the lattice which is the union of $L$ and $L^{*}$  is also a square lattice (but of smaller size). We denote this lattice by $\mathbb{L}^{Sq}$ -- by scaling we can assume that the mesh-size (i.e. the side-length of each square on the lattice) is 1. Another way of constructing $E$ and $E^{*}$ is as follows: For each square in $\mathbb{L}^{Sq}$, we add a (diagonal) edge between a pair of the diagonal vertices or we add a (diagonal) edge between the alternate pair of diagonal vertices each with probability 1/2. Then $E$ is the collection of the diagonal edges in $\mathbb{L}^{Sq}$ that join vertices of $L$ and $E^{*}$ is the collection of diagonal edges in $\mathbb{L}^{Sq}$ that join vertices of $L^{*}$.

Now consider a simply-connected domain $D\subsetneq\mathbb{C}$ such that the boundary of $D$ is on the lattice $\mathbb{L}^{Sq}$ and consider $a,b\in\partial D\cap\mathbb{L}^{Sq}$. We apply the following boundary conditions: in the squares in $\mathbb{L}^{Sq}$ that are on the boundary to the left of $a$ up to $b$, we join the vertices of $L$;  in the squares in $\mathbb{L}^{Sq}$ that are on the boundary to the right of $a$ up to $b$, we join the vertices of $L^{*}$. For the interior squares, we join the edges using the above method of constructing $E$ and $E^{*}$.  See Figure \ref{fig1}.

Then there is a continuous path $\Gamma$ from $a$ to $b$ on the dual lattice of $\mathbb{L}^{Sq}$ that does not cross the edges of $E$ and $E^{*}$ such that the edges to the right of $\Gamma$ are in $E^{*}$ and the edges to the left of $\Gamma$ are in $E$ . We call the path $\Gamma$ the \emph{bond percolation exploration path from $a$ to $b$ on the square lattice} (abbreviated \emph{SqP}). Similarly, we can define the percolation exploration path on the square lattice of mesh size $\delta$, $\delta \mathbb{L}^{Sq}$, for some $\delta>0$. See Figure \ref{fig2}. The SqP is not a simple path since it can intersect itself at the corner of the squares; however, it is a non-crossing path.

Then at every vertex of the SqP, the path turns left or right each with probability $1/2$ except when turning in one of the directions will result in the path being blocked (i.e the path can no longer reach the end point $b$) -- in this case the path is forced to go in the alternate direction. We shall use this as the construction of the bond percolation exploration path. We say that a vertex $V$ of a SqP is $\emph{free}$ if there are two possible choices for the next vertex; otherwise, we say that $V$ is \emph{non-free}.

The SqP satisfies the \emph{locality property}. This means that for any domain $D$ such that $0\in\partial D$ and $D\cap\mathbb{H}\not=\emptyset$, we can couple an SqP in $(D,0,b)$ with an SqP in $(\mathbb{H},0,\infty)$ up to first exit of $D\cap\mathbb{H}$.
\pagebreak
\begin{figure}[hp]
 \begin{center}
\scalebox{0.5}{\includegraphics{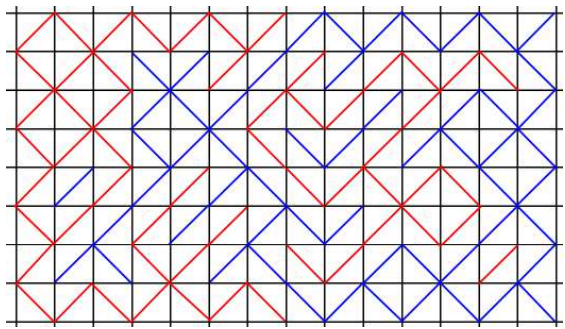}}
 \end{center}
\caption{Critical bond percolation on the square lattice in a rectangle with boundary conditions. The red edges form $E$ and the blue edges form $E^{*}$.} \label{fig1}
\end{figure}
\begin{figure}[hp]
 \begin{center}
\scalebox{0.5}{\includegraphics{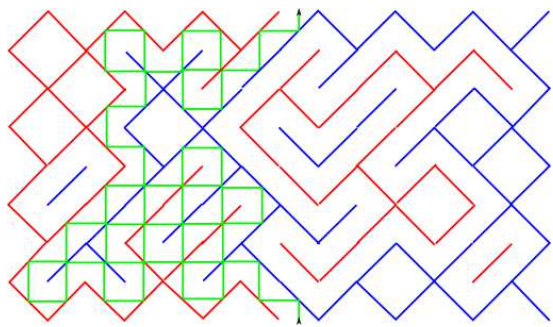}}
 \end{center}
\caption{The green path is the bond percolation exploration path.} \label{fig2}
\end{figure}
\section{Modification of the hexagonal lattice}\label{sect3}

We abbreviate the percolation exploration path on the hexagonal lattice as \emph{HexP}. Consider the following modification of the hexagonal lattice of mesh size $\delta>0$: for each hexagonal site on the lattice, we replace it with a rectangular site such that the rectangles tessellate the plane (see Figure \ref{fig3}). Each rectangle contains six vertices on its boundary: 4 at each corner and 2 on the top and bottom edges of the rectangle. We call this lattice the \emph{brick-wall lattice} of mesh size $\delta$.
\pagebreak
\begin{figure}[hp]
 \begin{center}
\scalebox{0.3}{\includegraphics{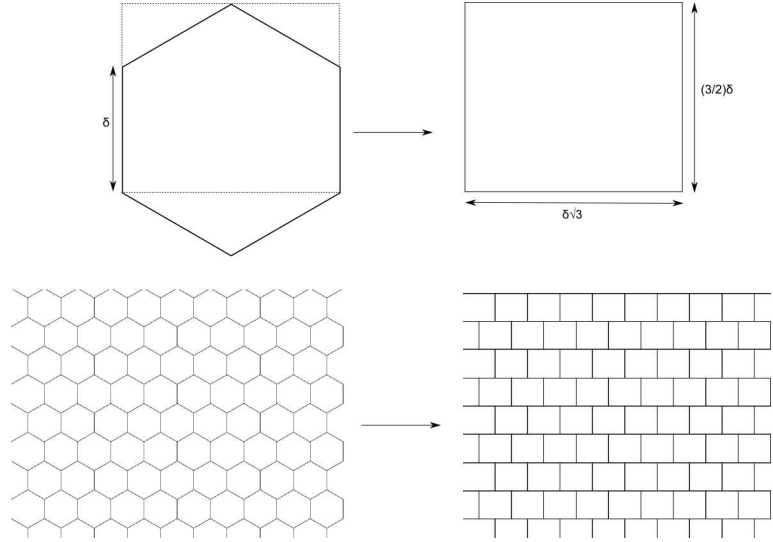}}
 \end{center}
\caption{The modification from the hexagonal lattice to the brick-wall lattice.} \label{fig3}
\end{figure}
\begin{figure}[hp]
 \begin{center}
\scalebox{0.5}{\includegraphics{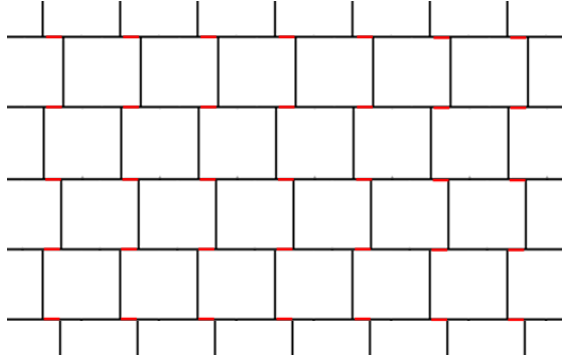}}
 \end{center}
\caption{The $\epsilon$-brick-wall lattice. The red edges have Euclidean length $\epsilon\delta$.} \label{fig4}
\end{figure}
 It is clear that the brick wall lattice is topologically equivalent to the hexagonal lattice. Let $w=\sqrt{3}$ denote the (horizontal) width of each rectangle of this lattice (i.e. the length of the base) when $\delta=1$.

We label the rows of the lattice by the integers such that the row containing the real-line is labeled as 0. For sufficiently small, fixed $\epsilon>0$, we shall now modify the brick wall lattice in $\mathbb{C}$ in the following way (see Figure \ref{fig4}):
\begin{enumerate}
\item For $k=2n$ for some $n\in\mathbb{Z}$, shift the $k$th row left by $(w/2-\epsilon)\delta/2$;
\item  For $k=2n+1$ for some $n\in\mathbb{Z}$, shift the $k$th row right by $(w/2-\epsilon)\delta/2$;
\end{enumerate}
 We call the resulting lattice (which is still topologically equivalent to the hexagonal lattice) the \emph{$\epsilon$-brick-wall lattice}.
For $\epsilon<0$ with $-\epsilon$ sufficiently small, we define the \emph{$\epsilon$-brick-wall lattice} to be the reflection of the \emph{$(-\epsilon)$-brick-wall lattice} across the $y$-axis. Note that as $\epsilon\searrow 0$ or $\epsilon\nearrow 0$ the $\epsilon$-brick-wall lattice tends to a rectangular lattice which we call the \emph{shifted brick wall lattice}.

Then we can find a function $\Phi_{\epsilon}$ which satisfies:
\begin{enumerate}
\item $\Phi_{\epsilon}$ maps the vertices of the hexagonal lattice 1--1 and onto the vertices of the $\epsilon$-brick-wall lattice.
\item For any path $\Gamma$ on the hexagonal lattice, $\Phi_{\epsilon}(\Gamma)$ is a path on the $\epsilon$-brick-wall lattice such that $\Phi_{\epsilon}(\Gamma)$ is contained in a $3\delta$-neighbourhood of $\Gamma$.
\end{enumerate}

We now suppose that $\nu$ is HexP in some domain $D$ from $a$ to $b$ on the lattice of mesh-size $\delta$. Let $D^{*}=\Phi_{\epsilon}(D)$ and $a^{*}=\Phi_{\epsilon}(a)$, $b^{*}=\Phi_{\epsilon}(b)$ denote the corresponding domain and fixed boundary points on the $\epsilon$-brick wall lattice. Consider the path $\nu_{\epsilon}=\Phi_{\epsilon}(\nu)$ on the $\epsilon$-brick wall lattice. We call $\nu_{\epsilon}$ the \emph{hexagonal lattice percolation exploration path on the $\epsilon$-brick wall lattice} . We abbreviate it as \emph{$\epsilon$-BP}.

\begin{figure}[hp]
 \begin{center}
\scalebox{0.7}{\includegraphics{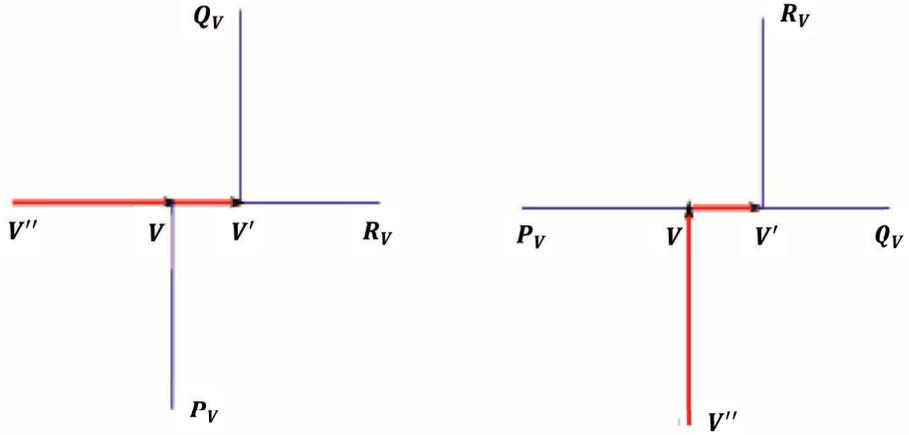}}
 \end{center}
\caption{The above situations (and their rotations and reflections) illustrate all the possible cases for $V, V', V'' , P_{V},Q_{V},R_{V}$.} \label{fig5}
\end{figure}

Now take any boundary vertices $a^{*}$, $b^{*}$ of a domain $D^{*}$ on the $\epsilon$-brick-wall lattice and take any simple path $\pi_{a^{*}\rightarrow V}$ on the lattice starting from $a^{*}$ and ending at an interior vertex $V$ of $D^{*}$ such that the final edge of the path is not of Euclidean length $\epsilon\delta$. Let $V'$ denote the vertex on the lattice connected to $V$ by an edge of Euclidean length $\epsilon\delta$; and let $V''$ be the second last vertex of the path.  Let $P_{V}$, not equal to either $V'$ or $V''$  be the remaining neighboring vertex of $V$. Also let $Q_{V}, R_{V}$  be the neighboring vertices of $V'$ not equal to $V$ (see Figure \ref{fig5}) such that the edge from $V'$ to $R_{V}$ is parallel to the edge from $V''$ to $V$. In other words, the edge from $Q_V$ to $V'$ is perpendicular to the edge from $V''$ to $V$. We say that the path $\pi_{a^{*}\rightarrow V}$ \emph{leaves an unblocked path to $b^{*}$} if it satisfies the following conditions.

\begin{enumerate}
\item We can continue the path $\pi_{a^{*}\rightarrow V}$ from $a^{*}$ to $V$ to a simple path from  $a^{*}$ to $b^{*}$ in $D^{*}$ such that the next vertex after $V$ is $P_{V}$;
\item We can continue the path $\pi_{a^{*}\rightarrow V}$ to a simple path from $a^{*}$ to $b^{*}$ in $D^{*}$ such that the next vertex after $V$ is $V'$ and the next vertex after $V'$ is $Q_{V}$;
\item We can continue the path $\pi_{a^{*}\rightarrow V}$ to a simple path from $a^{*}$ to $b^{*}$ in $D^{*}$ such that the next vertex after $V$ is $V'$ and the next vertex after $V'$ is $R_{V}$.
\end{enumerate}

Suppose that $\nu_{\epsilon}$ is an $\epsilon$-BP from $a^{*}$ to $b^{*}$ in $D^{*}$. Let $X_{0}^{\epsilon},X_{1}^{\epsilon},\ldots$ denote the vertices of $\nu_{\epsilon}$.  We say that $X_{j}^{\epsilon}$ is an \emph{unblocked} vertex of $\nu_{\epsilon}$ if the subpath of $\nu_{\epsilon}$ from $a^{*}$ to $X_{j}^{\epsilon}$ leaves an unblocked path to $b^{*}$.
 \begin{lemma}\label{lem1}
Conditioned on the event that $X_{j}^{\epsilon}$ is an unblocked vertex of an $\epsilon$-BP, we have
\[\mathbb{P}[X_{j+1}^{\epsilon}=P_{X_{j}^{\epsilon}}|X_{j}^{\epsilon} \text{ is unblocked}]=\frac{1}{2},\]
\begin{eqnarray*}
&&\mathbb{P}[X_{j+1}^{\epsilon}=(X_{j}^{\epsilon})', \ X_{j+2}^{\epsilon}
=Q_{X_{j}^{\epsilon}}|X_{j}^{\epsilon} \text{ is unblocked}]
\\&=&\mathbb{P}[X_{j+1}^{\epsilon}=(X_{j}^{\epsilon})', \  X_{j+2}^{\epsilon}=R_{X_{j}^{\epsilon}}|X_{j}^{\epsilon} \text{ is unblocked}]=\frac{1}{4}.
\end{eqnarray*}
\end{lemma}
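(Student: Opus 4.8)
The plan is to reduce the statement to the corresponding fact about the ordinary HexP on the hexagonal lattice via the bijection $\Phi_{\epsilon}$, and then to exploit the defining property of percolation exploration: at a genuinely free vertex (where neither choice is forced by a prior blockage), the two turning directions are equally likely because they are determined by an independent fair coin (the colour of a single, previously unexamined hexagon). First I would unwind the geometry of Figure \ref{fig5}: the vertex $V$, the short ($\epsilon\delta$-length) edge $VV'$, and the triple $V', Q_V, R_V$ are exactly the image under $\Phi_{\epsilon}$ of a single hexagonal vertex together with its three incident edges; the edge $VV'$ is an artifact of the brick-wall modification and carries no randomness, so the pair of steps ``$V\to V'\to Q_V$'' and ``$V\to V'\to R_V$'' on the $\epsilon$-brick-wall lattice correspond to the two possible turns at one hexagonal vertex, while the step ``$V\to P_V$'' corresponds to going straight through. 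Thus on the hexagonal side the three outcomes $\{P_{X_j^\epsilon}\}$, $\{(X_j^\epsilon)', Q\}$, $\{(X_j^\epsilon)', R\}$ are exactly the three local continuations available at the hexagon adjacent to the current exploration vertex.

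Next I would invoke the ``leaves an unblocked path to $b^*$'' hypothesis, which is precisely conditions (1)--(3) in the definition: all three continuations can be completed to simple paths reaching $b^*$, i.e. none of the three is forced or forbidden by the already-revealed configuration. Translating back through $\Phi_\epsilon$, this says the current hexagonal exploration vertex $X_j^\epsilon$ is a \emph{free} vertex of the HexP in the sense of Section \ref{sect3}: the path has a genuine choice. The key probabilistic input is then the standard fact for site percolation on the hexagonal lattice that the HexP's next move at a free vertex is governed by the colour of one new hexagon, which is an independent uniform bit: with probability $1/2$ the path turns one way, with probability $1/2$ the other. I would write $X_0^\epsilon,\dots,X_j^\epsilon$ (equivalently the revealed hexagons along the interface up to step $j$) as generating a $\sigma$-algebra, note that the decisive hexagon at step $j$ has not been examined, and conclude that conditionally on $\{X_j^\epsilon \text{ is unblocked}\}$ the turn is a fair coin flip independent of the past.

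It remains to distribute this into the three stated events. Going straight (to $P_{X_j^\epsilon}$) versus turning happens in the brick-wall picture over \emph{two} hexagonal steps because of the inserted short edge, but the actual branching is a single fair bit: hitting $P_{X_j^\epsilon}$ corresponds to one value of that bit, giving $\mathbb{P}[X_{j+1}^\epsilon = P_{X_j^\epsilon}\mid \cdot] = 1/2$. On the complementary event $\{X_{j+1}^\epsilon = (X_j^\epsilon)'\}$ the path has moved onto the short edge and now faces a second free vertex $V'$ with two completions, $Q_{X_j^\epsilon}$ and $R_{X_j^\epsilon}$ (again by conditions (2),(3)); by the same one-new-hexagon argument this second choice is another independent fair bit, so each of the two refined events has conditional probability $\tfrac12\cdot\tfrac12 = \tfrac14$. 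The main obstacle I anticipate is bookkeeping the correspondence between brick-wall steps and hexagonal steps rigorously enough to be sure that exactly one fresh hexagon is consulted at each of the two decision points, and that the ``unblocked'' hypothesis really does rule out \emph{every} forcing constraint (including constraints coming from the boundary arcs with the imposed colours) — but this is a finite case check over the configurations in Figure \ref{fig5} and their symmetries, which is routine given the locality of the exploration rule.
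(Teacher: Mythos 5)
Your core strategy --- transport the question back to the HexP via $\Phi_\epsilon$, use the unblocked hypothesis to argue that the relevant hexagon colours are fresh independent fair coins, and multiply --- is exactly the comparison the paper invokes in its one-line proof, and the computation in your final paragraph produces the right numbers for the right reason. The difficulty is that your opening geometric description contradicts that computation. You write that $V$, the short edge $VV'$, and the triple $V',Q_V,R_V$ together form ``the image under $\Phi_\epsilon$ of a single hexagonal vertex with its three incident edges,'' that $VV'$ is a randomness-free artifact, and that $V\to P_V$ corresponds to ``going straight through.'' But property (1) of $\Phi_\epsilon$ in Section \ref{sect3} says $\Phi_\epsilon$ is a bijection on vertices; hence $V$ and $V'$ are the images of two \emph{distinct} honeycomb vertices, and $VV'$ is the image of a genuine hexagonal edge that merely becomes length $\epsilon\delta$ under the row shift. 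Moreover, a honeycomb vertex has degree three, so there is no ``straight through'' option. If your first-paragraph picture were accurate, the exploration would face a single three-way choice, which cannot produce the $1/2,\,1/4,\,1/4$ split the lemma asserts.

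Your last paragraph tacitly replaces this picture with the correct one: at $\Phi_\epsilon^{-1}(V)$ the HexP turns toward $\Phi_\epsilon^{-1}(P_V)$ or toward $\Phi_\epsilon^{-1}(V')$ according to one fresh hexagon colour (probability $1/2$ each), and, on the event that it goes to $\Phi_\epsilon^{-1}(V')$, it then turns toward $\Phi_\epsilon^{-1}(Q_V)$ or $\Phi_\epsilon^{-1}(R_V)$ according to a \emph{second} fresh hexagon colour (probability $1/2$ each, independently of the first), the freshness in both cases being exactly what the three conditions in the definition of ``leaves an unblocked path to $b^*$'' guarantee. That is the argument that works and is what the paper's ``compare with the corresponding probabilities of the HexP'' is gesturing at. Rewrite the first paragraph to state this two-vertex, two-hexagon correspondence explicitly --- in particular, drop the claims that $VV'$ carries no randomness and that $V\to P_V$ is a ``straight'' step --- and the proof is correct.
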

%\pagebreak

\begin{proof}
We compare with the corresponding probabilities of the HexP.
\end{proof}

We now define the paths $\nu^{+}$ and $\nu^{-}$ by
\[\nu^{+}=\lim_{\epsilon\rightarrow 0+}\Phi_{\epsilon}(\nu) ,\]
and
\[\nu^{-}=\lim_{\epsilon\rightarrow 0-}\Phi_{\epsilon}(\nu) .\]
$\nu^{+}$ and $\nu^{-}$ are non-crossing paths on the shifted brick-wall lattice. We call $\nu^{+}$ the \emph{right percolation exploration process on the shifted brick-wall lattice} (abbreviated as +BP) and $\nu^{-}$ the \emph{left percolation exploration process on the shifted brick-wall lattice} (abbreviated as $-$BP). Then we can couple an $\epsilon$-BP with $\nu^{+}$ and $\nu^{-}$.
\begin{lemma}\label{lem2}
There is a coupling of a HexP, $\epsilon$-BP, $(-\epsilon)$-BP,  +BP, $-$BP such that for sufficiently small $\epsilon>0$, each of these paths is contained within a 2$\delta$ neighbourhood of any other.
\end{lemma}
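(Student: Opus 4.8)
\emph{Proof plan.} The plan is to make the coupling tautological and then reduce the statement to a handful of elementary Euclidean estimates. First I fix a single critical site percolation configuration on the hexagonal lattice of mesh-size $\delta$, together with the boundary conditions in $(D,a,b)$ that produce a HexP $\nu$, and I simply \emph{define} the $\epsilon$-BP to be $\Phi_{\epsilon}(\nu)$, the $(-\epsilon)$-BP to be $\Phi_{-\epsilon}(\nu)$, and $\nu^{+},\nu^{-}$ to be the limits $\lim_{\epsilon'\downarrow 0}\Phi_{\epsilon'}(\nu)$ and $\lim_{\epsilon'\uparrow 0}\Phi_{\epsilon'}(\nu)$. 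By the constructions of Section \ref{sect3} these are a HexP, an $\epsilon$-BP, a $(-\epsilon)$-BP, a $+$BP and a $-$BP respectively, and since each is a deterministic function of the one configuration they are automatically coupled. Thus all that remains is the metric bound, and by the triangle inequality it suffices to control, for each ordered pair $(\gamma_{1},\gamma_{2})$ among these five curves, the one-sided distance $\sup_{p\in\gamma_{1}}\mathrm{dist}(p,\gamma_{2})$.

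The estimate rests on three observations. (i) As $\epsilon\to 0$ the $\epsilon$-brick-wall lattice converges vertex-by-vertex to the shifted brick-wall lattice: the row shifts of Section \ref{sect3} change by only $\tfrac12\epsilon\delta$ and the red edges of length $\epsilon\delta$ collapse. Hence there is a constant $C$ with $|\Phi_{\epsilon}(v)-\Phi_{0}^{+}(v)|\le C\epsilon\delta$ and $|\Phi_{-\epsilon}(v)-\Phi_{0}^{-}(v)|\le C\epsilon\delta$ for every hexagonal-lattice vertex $v$, where $\Phi_{0}^{\pm}$ are the limiting (possibly two-to-one) maps onto the shifted brick-wall lattice; consequently the $\epsilon$-BP lies in a $C\epsilon\delta$-neighbourhood of $\nu^{+}$ and the $(-\epsilon)$-BP in a $C\epsilon\delta$-neighbourhood of $\nu^{-}$, so for $\epsilon$ small the lemma is reduced to showing that $\nu,\nu^{+},\nu^{-}$ are pairwise within $2\delta$-neighbourhoods of one another. (ii) Replacing a hexagonal site by the brick that occupies its place moves each of its six boundary vertices by a fixed fraction of $\delta$ — a direct computation with the brick dimensions of Section \ref{sect3} gives at most $\tfrac14\delta$ — and this is verified by inspecting the finitely many local pictures, namely those of Figure \ref{fig5} and their rotations and reflections; in particular $|\Phi_{0}^{+}(v)-\Phi_{0}^{-}(v)|\le\tfrac12\delta$. (iii) Every edge of the hexagonal lattice has length $\delta$ and every edge of the shifted brick-wall lattice has length at most a fixed multiple of $\delta$. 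Feeding (ii) and (iii) into the triangle inequality, a point in the interior of an edge of $\nu$ (resp.\ of $\nu^{\pm}$) is within half an edge-length of a vertex, which in turn is within $\tfrac14\delta$ of a vertex of the comparison curve; the only mildly worse case, $\nu^{+}$ against $\nu^{-}$, contributes the extra $\tfrac12\delta$ of (ii) but still comes out at roughly $\tfrac54\delta$. Adding back the $O(\epsilon\delta)$ corrections for the $\epsilon$-BP and $(-\epsilon)$-BP leaves every pairwise one-sided distance strictly below $2\delta$ once $\epsilon$ is small enough, which is the assertion.

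The lemma is, in the end, a bookkeeping exercise, so the main obstacle is modest. The one point requiring genuine care is the degeneration $\epsilon\to 0$: one must check that $\nu^{\pm}=\lim_{\epsilon\to0\pm}\Phi_{\epsilon}(\nu)$ really exist as non-crossing paths — the collapse of the length-$\epsilon\delta$ edges makes $\Phi_{0}^{\pm}$ two-to-one on the relevant vertex pairs — so that speaking of their $2\delta$-neighbourhoods is meaningful; this is exactly the content already implicit in the definition of the $\pm$BP, and it is where the hypothesis that $\epsilon$ (resp.\ $-\epsilon$) be sufficiently small is used. The only other thing to pin down is the explicit constant: one must confirm over the finite list of local configurations of Figure \ref{fig5} that the hexagon-to-brick vertex displacement, together with the half-edge-length slack that is unavoidable when passing from a statement about vertices to one about whole curves, stays strictly below $2\delta$ rather than merely below the crude $3\delta$ recorded for $\Phi_{\epsilon}$ in Section \ref{sect3}; with the brick dimensions already fixed there this holds with room to spare, the worst pairwise one-sided distance being of order $\tfrac54\delta$.
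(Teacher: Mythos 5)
Your proof is correct and takes essentially the same route as the paper, whose entire argument is that the coupling (all five paths being deterministic functions of a single HexP configuration) and the neighbourhood bound follow directly from the definitions of $\Phi_{\epsilon}$ and of $\nu^{\pm}$. The only substantive addition is that you carry out the Euclidean bookkeeping explicitly and reconcile the $2\delta$ claim with the cruder $3\delta$ bound stated in the definition of $\Phi_{\epsilon}$, which the paper leaves implicit.
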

\begin{proof}
This directly follows from the above definition.
\end{proof}
The aim is to modify the $+$BP and $-$BP to make them closer to the SqP. The SqP has only two possibilities for the next vertex at each vertex of the path and each edge of the path is perpendicular to the path. Hence, we need to condition the $+$BP and $-$BP not to go straight at each vertex. We do this in two steps. Firstly we look at the unblocked vertices of the $+$BP  and $-$BP, and we want to prevent it from going straight. At the unblocked vertices, we condition the $\epsilon$-BP such that the $+$BP does not go straight to create a new path -- the $+$CBP -- as follows.

Let $X_{0},X_{1},X_{2},\ldots$ denote the vertices of the HexP, $\nu$, and let $X_{0}^{+},X_{1}^{+}, X_{2}^{+},\ldots$ be the vertices of $+BP$, $\nu^{+}$.
We define a function $\phi^{+}:\mathbb{N}\rightarrow \mathbb{N}$ recursively by $\phi^{+}(0)=0$ and
\[\phi^{+}(n)=\inf\big\{j\geq \phi^{+}(n-1): \lim_{\epsilon\rightarrow 0+}\Phi_{\epsilon}(X_{j})=X_{n}^{+}    \big \}. \]
Then
\[ \big|\Phi_{\epsilon}(X_{\phi^{+}(n)})-X_{n}^{+} \big|\leq\epsilon\delta\]
and so
\[X_{n}^{+}=\lim_{\epsilon\rightarrow 0^{+}}X^{\epsilon}_{\phi^{+}(n)}.\]
We say that $X^{+}_{n}$ is an \emph{unblocked} vertex of the $+$BP if $X^{\epsilon}_{\phi^{+}(n)}$ is an unblocked vertex of the $\epsilon$-BP. This definition is independent of the choice of $\epsilon>0$ for sufficiently small $\epsilon$.
For each $n=0,1,2,\ldots$, we consider the vertex $X_{n}^{+}$ and take $V=X^{\epsilon}_{\phi^{+}(n)}$; we use the previous notation to define an event
\begin{eqnarray*}A_{n}^{+}&\triangleq&\{X_{n}^{+} \text{ is unblocked; if } X^{\epsilon}_{\phi^{+}(n)+1}=V', \text{ then } X^{\epsilon}_{\phi^{+}(n)+2}=Q_{V}\}\\&&\cup\{X_{n}^{+} \text{ is not unblocked}\}.\end{eqnarray*}
Similarly, we can define the events $A_{n}^{-}$.

We define the \emph{conditioned right percolation exploration path on the shifted brick wall lattice} (abbreviated as $+$CBP) to be the path whose transition probabilities at the $n$th step is the transition probability of the $+$BP at the $n$th step conditioned on $(A_{k}^{+})_{k=1}^{n}$. Similarly, we define the \emph{conditioned left percolation exploration path on the shifted brick wall lattice} (abbreviated as $-$CBP) to be the path whose law up to the $n$th step  is the law of a $-$BP up to the $n$th step conditioned on $(A_{k}^{-})_{k=1}^{n}$.

 We say that a vertex of the +CBP or $-$CBP, $\widetilde{X}_{j}$, is a \emph{free vertex} if the next vertex $\widetilde{X}_{j+1}$ has exactly two possible values (with positive probability) such that the edges $[\widetilde{X}_{j-1},\widetilde{X}_{j}]$ and $[\widetilde{X}_{j},\widetilde{X}_{j+1}]$ are perpendicular; otherwise, we say that it is a \emph{non-free} vertex.

\begin{lemma}\label{lem3}
Let $\widetilde{X}_{0},\widetilde{X}_{1},\ldots$ denote the vertices of a $+$CBP or a $-$CBP. Conditioned on the event that $\widetilde{X}_{j}$ is a free vertex, $\widetilde{X}_{j+1}$ has two possible values with probability $1/2$ and $\widetilde{X}_{j+1}-\widetilde{X}_{j}$ is independent of $\widetilde{X}_{0},\ldots,\widetilde{X}_{j}$.
\end{lemma}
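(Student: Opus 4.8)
The plan is to pull everything back to the $\epsilon$-BP through the correspondence $\phi^{+}$ and then to reduce the statement to the transition probabilities of the HexP, in the spirit of the proof of Lemma~\ref{lem1}. Since the $-$BP, and hence the $-$CBP, is the reflection across the $y$-axis of the $+$BP (resp.\ the $+$CBP), it suffices to treat the $+$CBP.

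First I would fix $j$, work on the event $\{\widetilde X_{j}\text{ is a free vertex}\}$, and write $\widetilde X_{j}=X_{n}^{+}$, $V=X^{\epsilon}_{\phi^{+}(n)}$. The first task is a classification of free vertices, obtained by running through the finitely many local configurations of $V,V',V'',P_{V},Q_{V},R_{V}$ allowed by Figure~\ref{fig5} (and their rotations and reflections). I expect to show that $\widetilde X_{j}$ is a free vertex precisely when conditions~(1) and~(2) in the definition of ``leaves an unblocked path to $b^{*}$'' hold at $V$ while condition~(3) fails; equivalently, the two perpendicular continuations $V\to P_{V}$ and $V\to V'\to Q_{V}$ are available but the straight continuation $V\to V'\to R_{V}$ is blocked. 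In every other configuration the next step is either deterministic, or one of the two available continuations is the straight one (so that the edges $[\widetilde X_{j-1},\widetilde X_{j}]$ and $[\widetilde X_{j},\widetilde X_{j+1}]$ fail to be perpendicular), and so $\widetilde X_{j}$ is non-free. The key consequence is that a free vertex $V$ is \emph{not} an unblocked vertex of the $\epsilon$-BP, whence the event $A_{n}^{+}$ collapses to $\{X_{n}^{+}\text{ is not unblocked}\}$ and holds automatically: at a free vertex the conditioning that defines the $+$CBP is inactive, and the conditional law of $\widetilde X_{j+1}$ given the past coincides with that of the $+$BP, i.e.\ of the $\epsilon$-BP after collapsing the short edge $[V,V']$.

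Then I would evaluate that law by comparison with the HexP, as in the proof of Lemma~\ref{lem1}. The exploration path arriving at $V$ makes a fair (probability $\tfrac{1}{2}$ each) choice between its two forward neighbours $P_{V}$ and $V'$, the governing variable being the still-unrevealed percolation variable the path inspects at $V$; and since condition~(3) fails while condition~(2) holds, once at $V'$ the path is forced to $Q_{V}$. Hence $\widetilde X_{j+1}$ is the collapsed image of $P_{V}$ with probability $\tfrac{1}{2}$ and of $Q_{V}$ with probability $\tfrac{1}{2}$, these being two distinct vertices, and in both cases $[\widetilde X_{j-1},\widetilde X_{j}]\perp[\widetilde X_{j},\widetilde X_{j+1}]$. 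Since this percolation variable is fresh --- independent of all those revealed by the path before it reaches $\widetilde X_{j}$ --- and, by the preceding paragraph, is unconstrained by $A_{1}^{+},\dots,A_{n}^{+}$, the choice of $\widetilde X_{j+1}$, and hence $\widetilde X_{j+1}-\widetilde X_{j}$, is independent of $\widetilde X_{0},\dots,\widetilde X_{j}$, which is the assertion.

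The hard part will be the combinatorial classification in the second paragraph: one has to go carefully through all the configurations permitted by Figure~\ref{fig5}, keep track of which incident edges are short (Euclidean length $\epsilon\delta$) and which survive the limit $\epsilon\to0^{+}$, confirm that a free vertex always corresponds to the single configuration isolated above, and --- most delicately --- verify in that configuration that the governing percolation variable is genuinely fresh and untouched by the events $A_{k}^{+}$ (the point being that a variable pinned by an earlier $A_{k}^{+}$ would force the corresponding step, so that the associated vertex could not be free). The coupling of Lemma~\ref{lem2} is used throughout so that ``blocked'' can be read off consistently for the HexP, the $\epsilon$-BP, and the $+$BP.
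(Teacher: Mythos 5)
The paper's own proof of Lemma \ref{lem3} is a one-sentence appeal to Lemmas \ref{lem1} and \ref{lem2} and the definitions, so you are supplying details the authors omit; the problem is that your argument rests on a classification of free vertices that is inverted. You assert that $\widetilde X_{j}$ is free precisely when conditions (1) and (2) hold while (3) fails, hence that a free vertex is never an unblocked vertex and the conditioning $A_{n}^{+}$ is inactive there. This is backwards. At an unblocked vertex the conditioning on $A_{n}^{+}$ is precisely what forbids the straight continuation $V\to V'\to R_{V}$, so under the $+$CBP law the next vertex has exactly two possible values with positive probability, namely $P_{V}$ and $Q_{V}$, both reached by an edge perpendicular to $[\widetilde X_{j-1},\widetilde X_{j}]$; thus unblocked vertices \emph{are} free vertices of the $+$CBP. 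This is also what the introduction says (the $+$CBP is the $+$BP conditioned not to go straight ``at the free vertices only'', where for the $+$BP ``free'' means having three choices, i.e.\ unblocked), and it is the generic case: a vertex far from $\partial D$ and from the past of the path is unblocked, hence free. Under your classification no bulk vertex would ever be free, and the lemma would be empty exactly where the rest of the paper uses it (e.g.\ the symmetry of $L_{j}$ at free vertices invoked in the proof of Lemma \ref{mel6}).

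The omission is not cosmetic, because the unblocked case is the one where the statement is delicate. Conditioning the probabilities $(\frac12,\frac14,\frac14)$ of Lemma \ref{lem1} on the event $A_{n}^{+}$ in the ordinary sense yields $\mathbb{P}[P_{V}]=\frac23$ and $\mathbb{P}[Q_{V}]=\frac13$, not $\frac12$ each; to recover the fair coin claimed by the lemma one has to read the definition of the $+$CBP as modifying only the second sub-step (forcing $V'\to Q_{V}$) while leaving untouched the fresh site that decides between $P_{V}$ and $V'$. Whichever reading is adopted, your proof must treat unblocked vertices explicitly: identify the percolation variable governing the surviving binary choice, verify it is fresh, and verify that $A_{1}^{+},\dots,A_{n}^{+}$ --- of which $A_{n}^{+}$ is \emph{not} vacuous here --- do not bias it. What you have actually proved, the $\frac12$--$\frac12$ fresh-coin computation at vertices where $R_{V}$ is geometrically blocked while $P_{V}$ and $Q_{V}$ are not, covers only a sub-family of free vertices and not the main one.
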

\begin{proof}
Follows directly from Lemmas \ref{lem1} and \ref{lem2}, and the definition of the $+$CBP and $-$CBP.
\end{proof}

So by construction, at the free vertices of the +CBP or $-$CBP,  the path does not go in the same direction for two consecutive edges. We now restrict the $+$CBP further by restricting to paths that do not go in the same direction for two consecutive edges at all vertices.
More precisely, we condition the $+$CBP or $-$CBP  not to go straight at each non-free vertex. This gives us the curve which is ``almost" the SqP except for the fact that the topology on the shifted brick-wall lattice induced by the topology on the $(\pm\epsilon)$-brick-wall lattice is not the same as the standard topology on the shifted brick-wall lattice. This is explained in further detail in Section \ref{section8}.
We call this restricted path the \emph{boundary conditioned right percolation exploration path on the shifted brick wall lattice} (abbreviated $+\partial$CBP). Similarly, we define the \emph{boundary conditioned left percolation exploration path on the shifted brick wall lattice} (abbreviated $-\partial$CBP). Then at every vertex of the $+\partial$CBP or $-\partial$CBP, the next edge is perpendicular to the previous edge. We say that a vertex $V$ of a $+\partial$CBP or $-\partial$CBP is $\emph{free}$ if there are exactly two possible values for the next vertex; otherwise, we say that $V$ is \emph{non-free}.

Finally, we remark that the $-$BP, $-$CBP,  and $-\partial$CBP are identically distributed to the reflection across the $y$-axis of the $+$BP, $+$CBP,  and $+\partial$CBP. In particular the Loewner driving functions of $-$BP, $-$CBP,  and $-\partial$CBP is $-1$ times the driving functions of +BP, +CBP,  and $+\partial$CBP respectively.

%\pagebreak
\begin{figure}[hp]
\scalebox{0.3}{\includegraphics{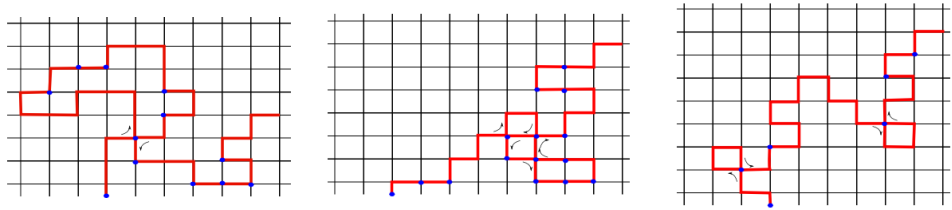}}
\caption{From left to right, a sample path of a $+$BP, $+$CBP and $+\partial$CBP. The blue vertices denote the blocked/non-free vertices and the arrows indicate the direction of the path.} \label{fig6}
\end{figure}

\section{The driving function of paths on lattices}
We suppose that $\mathbb{L}$ is a rectangular lattice (including the shifted brick-wall lattice and the square lattice of mesh size $\delta>0$). Suppose that $\mathbf{D}=(D,a,b)\in\mathcal{D}^{\mathbb{L}}$ for some $\delta>0$. Since the boundary of $D$ is on the lattice, we can apply the Schwarz-Christoffel formula \cite{Ne} to the function $\phi_{\mathbf{D}}$ to get that $\phi_{\mathbf{D}}$ satisfies
\begin{equation}\phi_{\mathbf{D}}'(z)^{2}=K\prod_{j=1}^{M}(z-r_{j,0})^{\rho_{j}}\label{zeq3}\end{equation}
for some $r_{j,0}\in \mathbb{R}$, $K\in\mathbb{C}$, $\rho_{j}\in\mathbb{R}$ and $M\in\mathbb{N}$ (see Figure \ref{nota2}). Note that when $(D,a,b)=(\mathbb{H},0,\infty)$, we have $M=0$.

 Let $\nu$ be a path on the $\mathbb{L}$. We assume for the while that $\nu$ is simple. At each point $Z_{k}$, the path changes the direction by $\pm \pi/2$ radians. Let $D_{n}=\phi_{\mathbf{D}}(H_{t_{n}})$. Since the boundary of $D_{n}$ is also on the lattice, we can apply the Schwarz-Christoffel formula to $\phi_{\mathbf{D}}\circ f_{t}$ to get
\begin{eqnarray}&&\phi_{\mathbf{D}}'(f_{t}(z))^{2}f_{t}'(z)^{2} \nonumber \\ &=&K\frac{(z-\xi_{t})^{2}}{(z-a_{1}(t))(z-b_{1}(t))}\Big(\prod_{k=2}^{N(t)}\big(\frac{z-b_{k}(t)}{z-a_{k}(t)}\big)^{L_{k}}\Big)
 \Big(\prod_{j=1}^{M}\big(z-r_{j}(t)\big)^{\rho_{j}}\Big)\label{zeq4},
 \end{eqnarray}
where $r_{j}(t)=f_{t}^{-1}(r_{j,0})$ and $L_{k}=-1,0, \text{ or } 1$ depending on whether the path at the $k$th step goes left, straight or right respectively. Note that $r_{j}(0)=r_{j,0}$.
We call $(L_{k})$ the \emph{turning sequence} of the path.

Combining (\ref{zeq3}) and (\ref{zeq4}), and eliminating the constant $K$, we get
\begin{eqnarray}
&&f_{t}'(z)^{2}\prod_{j=1}^{M}(f_{t}(z)-r_{j})^{\rho_{j}}\nonumber\\
&=&\frac{(z-\xi_{t})^{2}}{(z-a_{1}(t))(z-b_{1}(t))}\Big(\prod_{k=2}^{N(t)}\big(\frac{z-b_{k}(t)}{z-a_{k}(t)}\big)^{L_{k}}\Big)
 \big(\prod_{j=1}^{M}(z-r_{j}(t))^{\rho_{j}}\big)\label{zeq5}.\end{eqnarray}

%\pagebreak
\begin{figure}[hp]
\scalebox{0.55}{\includegraphics{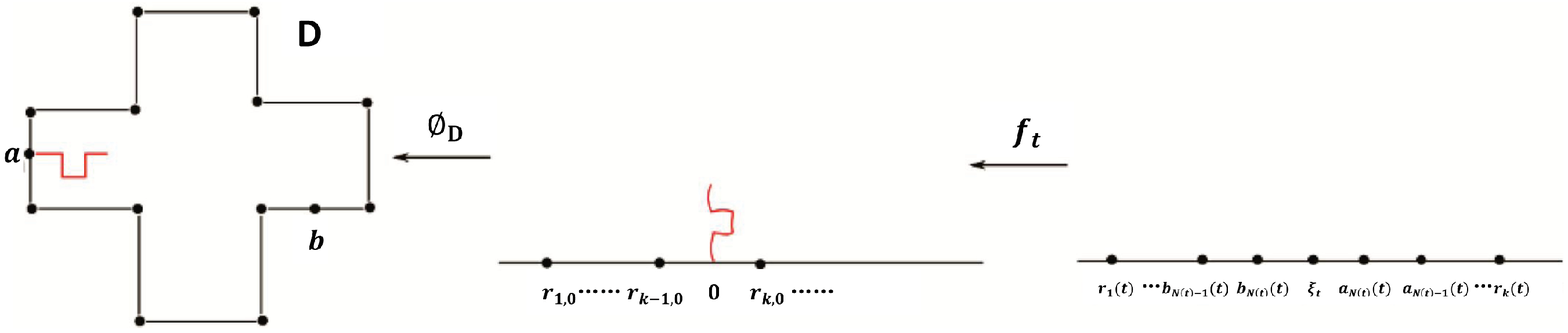}}
\caption{} \label{nota2}
\end{figure}
By taking the limit of rectilinear paths, we can extend (\ref{zeq5}) to non-crossing paths as well. In this case the points $a_{k}(t)$ and $b_{k}(t)$ for $k=1,2,\ldots$ may coincide with each other or with $\xi_{t}$ when the path makes loops.
\ \\

Now consider the expansion of both sides of (\ref{zeq5}) as $z\rightarrow\infty$. Using the fact that $f_{t}$ is hydrodynamically normalized, (see (\ref{zeq6})), we get
\begin{eqnarray*} \mathrm{LHS}&=&1+\frac{\sum_{j=1}^{M}\rho_{j}r_{j}}{z}+O\left(\frac{1}{z^2}\right)\text{
as } z\rightarrow \infty \\
\mathrm{RHS}&=&1+\frac{2\xi_{t}-a_{1}(t)-b_{1}(t)+\left(\sum_{k=2}^{N(t)}L_{j}(b_{k}(t)-a_{k}(t))\right)+\left(\sum_{j=1}^{M}\rho_{j}r_{j}(t)\right)}{z}\\
&&\quad \quad +O\big(\frac{1}{z^2}\big).
\end{eqnarray*}
Comparing the coefficients of the $1/z$ term, we deduce that
\begin{equation} \xi_{t}=\frac{1}{2}\left(a_{1}(t)+b_{1}(t)\right)+\frac{1}{2}\big(\sum_{j=1}^{M}\rho_{j}(r_{j}-r_{j}(t))\big)
+\frac{1}{2}\big(\sum_{k=2}^{N(t)}L_{k}(a_{k}(t)-b_{k}(t))\big).\label{weq6}\end{equation}
Then we have
\begin{equation}\xi_{t}=R_{t}(t)\label{zeq7}\end{equation}
where, for $s\leq t$, $R_{s}(t)$ is defined by
\begin{equation}R_{s}(t)=\frac{1}{2}\left(a_{1}(t)+b_{1}(t)\right)+\frac{1}{2}\big(\sum_{j=1}^{M}\rho_{j}(r_{j}-r_{j}(t))\big)
+\frac{1}{2}\big(\sum_{k=2}^{N(s)}L_{k}(a_{k}(t)-b_{k}(t))\big).\label{zeq11}\end{equation}

\section{Convergence of the +BP to SLE$_{6}$}
Let $\nu^{\delta} $ be the right percolation exploration path on the shifted brick-wall lattice with mesh-size $\delta$ (the +BP). Denote the vertices of $\nu^{\delta} $ by $Z^{\delta} _{0},Z^{\delta} _{1},Z^{\delta} _{2},\ldots$. Let $\gamma^{\delta} (t)$ be the curve $\nu^{\delta} $ parameterized  by the  half-plane capacity and let $\xi^{\delta} _{t}$ be the Loewner driving function of $\nu^{\delta} $. Then from Lemma \ref{lem2} and the results of Smirnov \cite{smirnov01a,smirnov01b}, Camia and Newman \cite{CN07}, we have the following theorem.

\begin{theorem}\label{th1}
For any $\mathbf{D}=(D,a,b)\in\mathcal{D}$ and $T>0$. The sequence of curves $(\gamma _{t}^{\delta})$ converges in distribution in the metric given in (\ref{metric}) to the trace of SLE$_{6}$ on $[0,T]$ as $\delta\searrow 0$.
\end{theorem}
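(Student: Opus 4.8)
The plan is to reduce the convergence of the $+$BP to the already-established convergence of the hexagonal-lattice site percolation exploration path (HexP) to SLE$_6$, which follows from Smirnov \cite{smirnov01a,smirnov01b} and Camia–Newman \cite{CN07}. The key bridge is Lemma \ref{lem2}: for every sufficiently small $\epsilon>0$ we have a coupling of a HexP $\nu$ with the $\epsilon$-BP, the $(-\epsilon)$-BP, the $+$BP $\nu^+$ and the $-$BP $\nu^-$ in which each of these paths lies in a $2\delta$-neighbourhood of every other. Since the $+$BP is defined as the $\epsilon\searrow 0$ limit $\nu^+ = \lim_{\epsilon\to 0^+}\Phi_\epsilon(\nu)$, and $\Phi_\epsilon$ moves every path into a $3\delta$-neighbourhood of itself, the coupling in fact tells us that $\nu^{\delta,+}$ stays within a $C\delta$-neighbourhood of the HexP $\nu^\delta$ on the same lattice mesh $\delta$.

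First I would fix $\mathbf{D}=(D,a,b)\in\mathcal D$ and a conformal map $\phi_{\mathbf D}:\mathbb H\to D$, and transfer everything to $\mathbb H$ from $0$ to $\infty$ via $\phi_{\mathbf D}^{-1}$; the distortion of the metric \eqref{metric} under the fixed conformal map $\phi_{\mathbf D}$ is uniformly controlled on compact subsets, so it suffices to prove the statement after this transfer. Next, approximate $D$ by lattice domains $D^\delta$ on $\delta\mathbb L^{Hex}$ (and correspondingly on the $\epsilon$-brick-wall lattice) converging to $D$ in the Carathéodory sense with $a^\delta\to a$, $b^\delta\to b$; standard stability of SLE$_6$ and of Loewner chains under such domain approximation means the HexP in $D^\delta$ still converges to the trace of SLE$_6$ in $D$ on $[0,T]$. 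Then I would invoke the Camia–Newman result to get, for each $\delta$, a coupling of the HexP $\nu^\delta$ with an SLE$_6$ trace $\gamma^{SLE}$ such that $\rho(\nu^\delta,\gamma^{SLE})\to 0$ in probability. Combining this with the $2\delta$-coupling of Lemma \ref{lem2}, the triangle inequality for $\rho$ gives
\[
\rho\big(\gamma^\delta,\gamma^{SLE}\big)\ \le\ \rho\big(\gamma^\delta,\nu^\delta\big)+\rho\big(\nu^\delta,\gamma^{SLE}\big)\ \le\ C\delta + o_{\mathbb P}(1)\ \longrightarrow\ 0,
\]
where $\gamma^\delta$ is the capacity parametrization of the $+$BP; note $\rho(\gamma^\delta,\nu^\delta)\le C\delta$ because the two paths are within Hausdorff distance $C\delta$ and are both non-crossing (so a common reparametrization tracking corresponding endpoints exists with sup-distance $O(\delta)$). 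This yields tightness and identifies every subsequential limit as SLE$_6$, hence convergence in distribution in the metric \eqref{metric} on $[0,T]$.

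The main obstacle I expect is the passage from Hausdorff-distance closeness (which the $2\delta$-coupling gives directly) to closeness in the curve metric $\rho$ with its reparametrization infimum: a priori two paths can be Hausdorff-close yet $\rho$-far if one of them oscillates back and forth. Here one must use that both the HexP and the $+$BP are non-crossing paths (limits of simple paths) that run monotonically "from $a$ to $b$" in a topological sense, together with an a priori equicontinuity estimate — the standard three-arm/six-arm percolation bound showing the HexP has no small macroscopic loops, uniformly in $\delta$ — to produce matched reparametrizations. A second, more bookkeeping point is that the $\epsilon\searrow 0$ limit defining $\nu^+$ must be controlled uniformly in $\delta$ so that the constant $C$ in $C\delta$ does not blow up; this is exactly what the construction of $\Phi_\epsilon$ and the $3\delta$-neighbourhood property were designed to guarantee, so it is a matter of carefully chaining those estimates rather than proving anything new. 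Finally, since half-plane capacity parametrization is a continuous functional of the curve (uniformly on $[0,T]$ for curves with controlled modulus of continuity), the convergence of traces upgrades to convergence of the capacity-parametrized curves $\gamma^\delta_t$ on $[0,T]$, completing the proof.
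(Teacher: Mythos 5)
Your proposal follows essentially the same route as the paper, which offers no written proof beyond the remark that the theorem follows from Lemma \ref{lem2} (the $2\delta$-coupling of the $+$BP with the HexP) together with the Smirnov and Camia--Newman convergence of the HexP to SLE$_6$; your triangle-inequality argument in the metric $\rho$ is exactly the intended deduction, and you correctly note that the vertex-by-vertex correspondence furnished by $\Phi_\epsilon$ is what upgrades Hausdorff closeness to $\rho$-closeness. Your write-up is in fact more careful than the paper's on the reparametrization and domain-approximation points.
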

\begin{corollary}\label{couple}For any fixed sufficiently small $\delta>0$, there exists a filtered probability space $(\Omega,\mathcal{F}_{t}^{\delta},\mu)$ on which $\gamma^{\delta}(t)$ and the trace of SLE$_{6}$, $\Gamma(t)$,  are defined and an increasing function $\varsigma_{\delta}(t)$  such that
\begin{enumerate}
\item $\Gamma(t)$ and $\gamma^{\delta}(\varsigma_{\delta}^{-1}(t))$ are adapted to $\mathcal{F}_{t}^{\delta}$.
\item The Loewner driving function of $\Gamma(t)$ is $\sqrt{6}$ times an $\mathcal{F}_{t}^{\delta}$-Brownian motion $B_{t}$.
\item Almost surely, $\gamma^{\delta}(t)$ lies in a C$\delta^{\alpha}$ neighbourhood of $\Gamma(t)$ for some universal constants $C,\alpha>0$.
\item For $t\in[0,T]$, we have
\[|\varsigma_{\delta}(t)-t|<C\delta^{\frac{\alpha}{2}}.\]
\end{enumerate}
\end{corollary}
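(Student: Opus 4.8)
The plan is to deduce the corollary from Theorem \ref{th1} by combining the Skorokhod representation theorem with quantitative estimates coming from the percolation literature, and then reparameterizing. First I would invoke Theorem \ref{th1}: the curves $\gamma^\delta$ converge in distribution, in the metric $\rho$ of \eqref{metric}, to the trace $\Gamma$ of SLE$_6$ on $[0,T]$. By the Skorokhod representation theorem, on a suitable probability space $(\Omega,\mu)$ we may realize, for each $\delta$ in a sequence tending to $0$, copies of $\gamma^\delta$ and $\Gamma$ such that $\rho(\gamma^\delta,\Gamma)\to 0$ almost surely; unwinding the definition of $\rho$ this gives a reparameterization $\sigma_\delta$ with $\sup_t|\gamma^\delta(\sigma_\delta(t))-\Gamma(t)|\to 0$. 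To get the quantitative rate $C\delta^\alpha$ in item (3), rather than mere convergence, I would instead appeal to the known polynomial rate of convergence of the percolation exploration path to SLE$_6$ (the Camia--Newman argument \cite{CN07} together with Smirnov's estimates \cite{smirnov01a,smirnov01b} — and, transported to the shifted brick-wall lattice via Lemma \ref{lem2}, which only costs a further $2\delta$): there is a coupling in which the Hausdorff/uniform distance between the two curves is at most $C\delta^\alpha$ with probability $1$ after passing to the right space, for universal $C,\alpha>0$. The filtration $\mathcal F_t^\delta$ is then taken to be the one generated jointly by $\Gamma$ and the reparameterized curve, augmented as usual so that the SLE$_6$ driving function $\sqrt 6 B_t$ is an $\mathcal F_t^\delta$-Brownian motion — this is item (2), and (1) holds by construction.

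The remaining point is the time-change, items (3) and (4) simultaneously. The issue is that $\gamma^\delta$ and $\Gamma$ are each parameterized by half-plane capacity in their own right, but the capacity parameterization is not stable under small perturbations of the curve, so $\gamma^\delta(t)$ need not be within $C\delta^\alpha$ of $\Gamma(t)$ for the \emph{same} $t$. I would therefore define $\varsigma_\delta$ to be the increasing function matching capacities: since $\gamma^\delta$ lies within $C\delta^\alpha$ of $\Gamma$ as a set (from the previous paragraph), standard estimates relating half-plane capacity to the geometry of the hull — e.g. $\mathrm{hcap}$ is monotone in the hull and comparable to (radius)$^2$, and a hull contained in a $C\delta^\alpha$-neighbourhood of $\Gamma[0,t]$ but containing $\Gamma[0,t']$ for $t'$ slightly less than $t$ pins down the capacity up to an error controlled by $\delta^{\alpha/2}$ on the compact interval $[0,T]$ — give $|\varsigma_\delta(t)-t|<C\delta^{\alpha/2}$, which is item (4). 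Combining this capacity comparison with the spatial closeness gives item (3) for the reparameterized curve $\gamma^\delta(\varsigma_\delta^{-1}(t))$, possibly after enlarging $C$ and shrinking $\alpha$.

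The main obstacle is extracting the \emph{polynomial rate} $\delta^\alpha$ rather than a qualitative statement: Theorem \ref{th1} as quoted gives only convergence in distribution, so item (3) really requires the effective version of the Camia--Newman/Smirnov convergence (quantitative Cardy's formula and RSW-type bounds on the hexagonal lattice, pushed through the homeomorphism $\Phi_\epsilon$ and Lemma \ref{lem2}). I would cite this in the form: for the site percolation exploration path there is a coupling with SLE$_6$ in which the curves are $C\delta^\alpha$-close with probability $\ge 1-C\delta^\alpha$, and then absorb the small exceptional probability into the constants by a routine truncation. The passage from the hexagonal lattice to the shifted brick-wall lattice is harmless since Lemma \ref{lem2} couples the $+$BP within a $2\delta$ neighbourhood of the HexP, and $2\delta\le C\delta^\alpha$ for $\alpha<1$. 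Everything else — Skorokhod, the capacity/geometry comparison, and the augmentation of the filtration — is standard Loewner-theory bookkeeping.
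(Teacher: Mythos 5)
Your treatment of items (3) and (4) follows the paper's route: quantitative closeness of the curves comes from the effective versions of the Camia--Newman/Smirnov convergence (the paper cites \cite{BCL} and \cite{MNW} for exactly this), transported through Lemma \ref{lem2}, and the capacity comparison $|\varsigma_{\delta}(t)-t|<C\delta^{\alpha/2}$ is obtained from the spatial closeness via a hull-geometry estimate (the paper uses Lemma 4.10 of \cite{LMR}, which is the same idea as your $\mathrm{hcap}$ argument). That part is fine.

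The genuine gap is in items (1) and (2). You take the Skorokhod(--Dudley) coupling and then declare that $\mathcal{F}_{t}^{\delta}$ is ``the filtration generated jointly by $\Gamma$ and the reparameterized curve, augmented as usual so that $\sqrt{6}B_{t}$ is an $\mathcal{F}_{t}^{\delta}$-Brownian motion.'' This does not work: the Skorokhod representation produces a coupling on an abstract space with no adaptedness structure whatsoever, so $\gamma^{\delta}$ restricted to $[0,t]$ may carry information about the future increments of $\Gamma$ (hence of $B$) beyond time $t$. Enlarging the filtration of $B$ by such anticipating information generically destroys the Brownian motion property, and no ``usual augmentation'' repairs this. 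The whole content of items (1) and (2) is that one can produce a coupling that is simultaneously adapted, and this requires building both objects from a common source of randomness rather than coupling them after the fact. The paper does this by taking the turning sequence $(\widehat{L}_{k})$ of the HexP, coupling its partial sums $Y_{n}$ to a Brownian motion $\mathbb{B}_{t}$ via Donsker's invariance principle on the filtration $\widehat{\mathcal{F}}_{t}$, observing that $(\widehat{L}_{k})$ determines the $+$BP $\nu^{*}$ (via Lemma \ref{lem2}), and then constructing the SLE$_{6}$ on the same filtration before reparameterizing by $\varsigma_{\delta}^{-1}$; joint adaptedness and the $\mathcal{F}_{t}^{\delta}$-Brownian motion property of $B_{t}$ then hold by construction. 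Your proposal needs some such common-randomness construction; as written, item (2) is unjustified. (A smaller issue: passing from ``$C\delta^{\alpha}$-close with probability $\geq 1-C\delta^{\alpha}$'' to the almost-sure statement in (3) is not a matter of absorbing the exceptional probability into constants; you must modify the coupling on the exceptional event, and you should say how this is done without breaking (1) and (2).)
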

\begin{proof}
For any sequence $\delta_{k}\searrow 0$ as $k\rightarrow \infty$, by Theorem \ref{th1}, the $+$BP on the lattice of mesh size $\delta_{k}$, $\nu^{\delta_{k}}$, converges to SLE$_{6}$ as $k\rightarrow\infty$. Hence, the Skorokhod-Dudley theorem \cite{RY} gives a coupling between $(\nu^{\delta_{k}})_{k \in \mathbb{N}}$ and SLE$_{6}$ such that $\nu^{\delta_{k}}$ converges almost surely to SLE$_{6}$ as $k\rightarrow\infty$. Then (3) simply follows from \cite{BCL} or \cite{MNW}.

To prove (1) and (2), we construct the above coupling in the following way: firstly, we consider a standard one dimensional Brownian motion $\mathbb{B}_{t}$ and associated natural filtration $\widehat{\mathcal{F}}_{t}$. Using the Donsker's invariance principle \cite{RY}, we can consider a simple random walk
\[Y_{n}=\sum_{k=1}^{n} \widehat{L}_{k}\]
(where $\widehat{L}_{k}$ are independent random variables taking the values $+1$ and $-1$ with probability 1/2 each) coupled to $\mathbb{B}_{t}$. Since the HexP (on the lattice of mesh size $\delta$) can be constructed from the sequence of random variables $(\widehat{L}_{k})$, we can define a +BP, $\nu^{*}$, on the filtration $\widehat{\mathcal{F}}_{t}$ using the coupling defined in Lemma \ref{lem2}. We can reparametrize by an increasing function  $ \sigma(t)$ such that
$Y_{n}$ is adapted to $\widehat{\mathcal{F}}_{\sigma(t_{n})}$. Note that $(Y_{n})$ completely determines $\nu^{*}$, and vice versa.

In accordance with the first paragraph above, we are clear that one can construct a SLE$_{6}$ on the filtration $\widehat{\mathcal{F}}_{\sigma(t)}$ with parametrization given by $\nu^{*}$. We now reparameterize by $t\mapsto \varsigma_{\delta}^{-1}(t)$ so that SLE$_{6}$ is parameterized by half-plane capacity; we also set $\mathcal{F}_{t}^{\delta}=\widehat{\mathcal{F}}_{\sigma(\varsigma_{\delta}^{-1}(t))}$. This completes the construction of the coupling and establishes (1) and (2).

Lemma 4.10 in \cite{LMR} states that if the curves are close, then so are their respective half-plane capacities, and this Lemma also gives a relationship between their closeness. Hence (4) follows from this Lemma, and (3).
\end{proof}
\begin{corollary}\label{cor1}
For any $T>0$ and $s,u,t\in(0,T]$ such that $s<u<t$. Let
$\alpha^{\delta} _{s}(t)$ and $\beta^{\delta} _{s}(t)$ be the two real-valued preimages of $\gamma^{\delta} (s)$ under $f^{\delta}_{t}$ with $\alpha^{\delta} _{s}(t)>\beta^{\delta} _{s}(t)$.
In accordance with Theorem \ref{th1} and Corollary \ref{couple}, we can define
\[V_{s}(t)\triangleq\lim_{\delta\searrow 0} \big(\alpha^{\delta} _{s}(t)-\beta^{\delta} _{s}(t)\big)\]
such that $V_{s}(t)-V_{s}(u)$ is independent of $\mathcal{F}^{\delta}_{u}$ and $V_{s}(s)=0$.
Moreover, we have
\[| \left(\alpha^{\delta} _{s}(t)-\beta^{\delta} _{s}(t)\right)- V_{s}(t)|\leq C\delta^{\frac{\alpha}{2}} \text{ almost surely},\]
for some constant $C>0$ which is independent of $s,t$ and $\delta$  (but could depend on $T$).
\end{corollary}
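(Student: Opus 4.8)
The plan is to establish everything inside the coupling of Corollary \ref{couple}, transporting the properties in question from the SLE$_{6}$ trace $\Gamma$ to the discrete paths $\gamma^{\delta}$. On that probability space write $F_{t}:\mathbb{H}\to\mathbb{H}\setminus\Gamma[0,t]$ for the hydrodynamically normalized conformal maps of the SLE$_{6}$ trace, $G_{t}=F_{t}^{-1}$, and $W_{t}=\sqrt{6}B_{t}$ for its driving function. For $s<t$ let $A_{s}(t)\ge B_{s}(t)$ be the two images under $G_{t}$ of the boundary point $\Gamma(s)$ of $\mathbb{H}\setminus\Gamma[0,t]$ (allowing, as in Section \ref{notation}, $A_{s}(t)=B_{s}(t)$ in the degenerate case where the trace has swallowed $\Gamma(s)$ from one side). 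Just as in \eqref{zeq2}, the pair $(A_{s}(t),B_{s}(t))$ solves the Loewner ODE with driving term $W$, started from $A_{s}(s)=B_{s}(s)=W_{s}$; I will show that $V_{s}(t):=A_{s}(t)-B_{s}(t)$ is the asserted limit, and then $V_{s}(s)=0$ is immediate.

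The substantive step is the quantitative comparison. By parts (3)--(4) of Corollary \ref{couple}, after the time change $\varsigma_{\delta}$ the hull $\gamma^{\delta}[0,\varsigma_{\delta}^{-1}(t)]$ lies within $C\delta^{\alpha}$ of $\Gamma[0,t]$ in $\overline{\mathbb{H}}$, while $|\varsigma_{\delta}(t)-t|<C\delta^{\alpha/2}$ on $[0,T]$. I would feed this into a quantitative stability estimate for the chordal Loewner flow of the type of Lemma 4.10 of \cite{LMR} (see also \cite{BCL,MNW}): if two Loewner hulls are $\eta$-close in $\overline{\mathbb{H}}$, then the associated maps and their inverses, hence also the image under $g_{t}$ of any prescribed boundary point, are uniformly $O(\eta^{1/2})$-close on compacts; the square-root loss is the familiar boundary/tip effect, already visible for a straight slit, where the two real preimages of $\gamma(s)$ are $\pm 2\sqrt{t-s}$. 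Propagating both the curve closeness and the parametrization closeness through this estimate gives
\[\big|(\alpha^{\delta}_{s}(t)-\beta^{\delta}_{s}(t))-(A_{s}(t)-B_{s}(t))\big|\le C\delta^{\alpha/2}\qquad(0<s<t\le T),\]
with $C=C(T)$. In particular the limit defining $V_{s}(t)$ exists along the coupling, Theorem \ref{th1} shows its law does not depend on the coupling, so $V_{s}(t)$ is well defined, and the displayed inequality is precisely the last assertion of the corollary.

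For the independence of the increment, note that by the Loewner equation $V_{s}(t)-V_{s}(u)$ is a measurable functional of the future trace increment $\Gamma[u,t]$ together with the $\mathcal{F}^{\delta}_{u}$-measurable data, which (since the two preimages straddle the driving point) amounts to the quantities $A_{s}(u)-W_{u}\ge 0$ and $W_{u}-B_{s}(u)\ge 0$. The domain Markov property and conformal invariance of SLE$_{6}$ --- conditionally on $\mathcal{F}^{\delta}_{u}$ the image under $G_{u}$ of the future of $\Gamma$ is a fresh SLE$_{6}$ from $W_{u}$, independent of $\mathcal{F}^{\delta}_{u}$ --- then yield the assertion.

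I expect the main obstacle to be the uniform boundary-stability estimate of the second step: one must control the displacement of the two real preimages of $\Gamma(s)$ under a perturbation of the curve \emph{uniformly} over all $0<s<t\le T$, including the delicate regime $t\downarrow s$ (near the tip) and the regime where the SLE$_{6}$ curve pinches or nearly touches itself close to $\Gamma(s)$, where $\alpha^{\delta}_{s}-\beta^{\delta}_{s}$ is small and the relevant conformal derivatives degenerate; extracting a single clean power of $\delta$, uniform in $s$ and $t$, is the crux.
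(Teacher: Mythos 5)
Your argument follows the paper's own proof: both work inside the coupling of Corollary \ref{couple} and invoke the Lind--Marshall--Rohde stability estimate (the paper cites Lemma 4.8 of \cite{LMR}) to convert the $C\delta^{\alpha}$ curve-closeness of part (3) into a $C\delta^{\alpha/2}$ bound on the preimage gap, identifying $V_{s}(t)$ with the corresponding quantity for the SLE$_{6}$ trace and obtaining the independence of increments from the Markov property of SLE$_{6}$. The uniformity in $s,t$ that you flag as the crux is precisely the point the paper dispatches with a one-line appeal to ``uniform convergence,'' so your write-up is, if anything, more explicit about where the remaining work lies.
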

\begin{proof}
From Theorem \ref{th1} and Corollary \ref{couple}, $V_{s}(t)$ is well-defined, $V_{s}(t)- V_{s}(u)$ is independent of $\mathcal{F}^{\delta}_{u}$. Also, $V_{s}(s)=0$ since $\alpha^{\delta}_{s} (s)=\beta^{\delta}_{s} (s)$. Moreover, Corollary \ref{couple} implies that we can couple a SLE$_{6}$ trace, $\Gamma$, with $\nu^{\delta} $ such that $\nu^{\delta} $ is contained in a $C\delta^{\alpha}$-neighbourhood of $\Gamma$. Now, Lemma 4.8 in \cite{LMR} states that if the curves are close, then so are their respective preimages, and this Lemma also gives a relationship between their closeness.
In particular, using  (3) in Corollary \ref{couple}, this Lemma implies that
\[\left| \left(\alpha^{\delta}_{s} (t)-\beta^{\delta} _{s}(t)\right)- V_{s}(t)\right| \leq C\delta^{\frac{\alpha}{2}}\]
for some constant $C>0$. Uniform convergence implies that $C$ does not depend on $s,t$ or $\delta$.
\end{proof}

\section{Some useful estimates}
In this section, we fix a sufficiently small $\delta>0$, and for the sake of clarity, we here suppress the dependence of $\delta$ and $\mathbf{D}$ in all the notations in this section. %Where the value of the mesh-size $\delta$ or the domain $\mathbf{D}$ needs to be emphasized, we will add $\delta$ or $\mathbf{D}$ respectively as a superscript to the notation.
We let $\mathcal{F}_{t}$ denote the filtration given in Corollary \ref{couple}. Using the notation in Section \ref{sect3}, we let
\[\mathfrak{A}^{+}_{n}\triangleq\big(A_{k}^{+}\big)_{k=0}^{n},\]
\[\mathfrak{A}^{+}_{\infty}\triangleq\big(A_{k}^{+}\big)_{k=0}^{\infty}.\]
Also, for a random variable $X$ and event $E$, we denote by $X|E$ the restriction of the random variable $X$ to the event $E$ whose distribution is the conditional distribution of $X$ given $E$ i.e.
\[F_{X|E}(x)\triangleq\mathbb{P}[X\leq x| E ].\]
We now need to use some concepts introduced in \cite{LLN}. Let $A$ be a compact $\mathbb{H}$-hull i.e. $A$ is compact and $\mathbb{H}\setminus A$ is simply-connected. We define
$\mathrm{hsiz}[A]$ to be the area of the union of all the discs tangent to $\mathbb{R}$ with centre at a point in $A$. Then there is the following relationship between $\mathrm{hsiz}$ and half-plane capacity (see Theorem 1 in \cite{LLN})
\begin{equation}\frac{1}{66}\mathrm{hsiz}[A]<\mathrm{hcap}[A]<\frac{7}{2\pi}\mathrm{hsiz}[A]\label{hsiz}\end{equation}

Now for some  $\eta>\delta$ (which we shall specify later), we define a sequence of stopping times as follows: $m_{0}=0$ and
\[m_{n}\triangleq\inf\left\{j\geq m_{n-1}: \mathrm{hsiz}[ f_{t_{m_{n-1}}}(\gamma[t_{m_{n-1}},t_{j}])]>\frac{\eta}{2}\right\}.\]
Then for $\eta$ sufficiently large compared with $\delta$, we have
\begin{equation}t_{m_{n}}-t_{m_{n-1}}<\frac{7}{2\pi}\eta\label{xsx1}.\end{equation}
The main idea here is the following: when the curve is winding a lot, the $\mathrm{hsiz}$ does not change since the winded parts would be contained in the union of discs in the definition of $\mathrm{hsiz}$. Hence the curve must `unwind'' at time $t_{m_{n}}$ for each $n$. This fact will be needed in the following lemma.

First we define, for $k>n$,
\[W_{k,n}\triangleq\sum_{j=n+1}^{k}L_{j}.\]
Note that $W_{k,n}$ is equal to a constant multiplied by the winding of the curve from the $(n+1)$-th step to the $k$th step. We need the following lemma regarding $W_{k,n}$.

\begin{lemma}\label{turn1}
For any $l>0$,
\begin{equation}\mathbb{P}[|W_{k,m_{n}}|\geq l|\mathcal{F}_{\varsigma_{\delta}(t_{m_{n}})},\mathfrak{A}_{k}^{+}]\leq C_{1}e^{-C_{2}l}.\label{brb1}\end{equation}
for constants $C_{1},C_{2}$ independent of $k$, $n$ and $l$.
In particular, $\mathbb{E}[\big|W_{k,m_{n}}\big|^{\beta}|\mathcal{F}_{\varsigma_{\delta}(s)},$ $\mathfrak{A}_{k}^{+}]$ is well-defined for any $s<t_{m_{n}}$ and $\beta>0$.
\end{lemma}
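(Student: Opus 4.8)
The plan is to control the winding $W_{k,m_n}$ via the "unwinding at the stopping times" mechanism described after \eqref{xsx1}, combined with the Markovian/conditional-independence structure of the $+$CBP at free vertices (Lemma \ref{lem3}) and the conditioned transition probabilities built into $\mathfrak{A}^+$. The key observation is that the winding of the curve between $t_{m_n}$ and $t_k$ cannot be large without the curve accumulating a correspondingly large $\mathrm{hsiz}$ relative to $f_{t_{m_n}}$; but by the very definition of the stopping times $m_n$, the $\mathrm{hsiz}$ increments are at most $\eta/2$ between consecutive $m$'s, and within one such block the curve has already "unwound" by time $t_{m_{n+1}}$. Thus a large value of $|W_{k,m_n}|$ forces the curve to perform many net turns of the same sign inside a single capacity-bounded block, and each such turn — at a free vertex — is an unbiased $\pm 1$ choice independent of the past by Lemma \ref{lem3}.

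First I would reduce the bound to a statement about an excursion of a random walk. Write $W_{k,m_n}=\sum_{j=m_n+1}^{k}L_j$ and split the $L_j$ according to whether the $j$th vertex is free or non-free. At non-free vertices $L_j$ is determined by the geometry (the curve is forced), so the contribution of non-free steps to $W_{k,m_n}$ is, up to a bounded additive constant per "cap/turn", controlled by the number of times the curve is forced to turn, which is in turn controlled by $\mathrm{hsiz}$ through \eqref{hsiz} and \eqref{xsx1}: a curve that is forced many times has necessarily created many fjords, hence large $\mathrm{hsiz}$, contradicting the $\eta/2$ cap. At free vertices, by Lemma \ref{lem3} the signs $L_j$ are i.i.d.\ symmetric $\pm 1$, independent of $\mathcal{F}_{\varsigma_\delta(t_{m_n})}$ and of the previously revealed vertices, and the conditioning event $\mathfrak{A}^+_k$ only refines the past (it concerns steps already taken), so it does not bias these increments. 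Hence conditionally the free-step partial sums form a symmetric simple random walk, and $|W_{k,m_n}|\ge l$ implies either that this walk reaches level $\ge l/2$ or that there are $\ge l/2$ non-free steps in the block.

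Next I would estimate each of these two events. For the random-walk part: by the reflection principle, $\mathbb{P}[\max_{j}|S_j|\ge l/2]$ for a symmetric SRW run for a number of steps bounded in terms of $\eta$ decays like $C_1 e^{-C_2 l}$ — here one uses that the number of free steps in a block $[m_n,m_{n+1}]$ has exponential tails, which itself follows from \eqref{xsx1} (the capacity increment is bounded, so the number of lattice steps is bounded with exponential tails since each step advances capacity by a mesh-dependent amount), giving a genuine exponential bound after summing over the (geometrically distributed) block length. For the forced-step part: a net accumulation of $l/2$ forced turns of the same sign produces a spiral that traps a region whose $\mathrm{hsiz}$ grows at least linearly in the number of forced same-sign turns, again with exponential tails for the deviation, contradicting the $\eta/2$ cap except on an event of probability $\le C_1 e^{-C_2 l}$. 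Combining the two estimates gives \eqref{brb1}; the moment statement $\mathbb{E}[|W_{k,m_n}|^\beta\mid\cdots]<\infty$ then follows immediately by integrating the exponential tail, and the extension to conditioning on $\mathcal{F}_{\varsigma_\delta(s)}$ for $s<t_{m_n}$ is automatic since $\mathcal{F}_{\varsigma_\delta(s)}\subset\mathcal{F}_{\varsigma_\delta(t_{m_n})}$ and the bound is uniform.

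The main obstacle I anticipate is making rigorous the claim that "large net winding inside a block forces large $\mathrm{hsiz}$". The subtlety is that winding is measured in the image of $f_{t_{m_n}}$ (i.e.\ after uniformizing away the past), and one must argue that a spiral in $H_{t_{m_n}}$ near the tip, after applying $f_{t_{m_n}}^{-1}$, still occupies a region of $\mathrm{hsiz}$ comparable to its winding number times a mesh-order quantity — and crucially that this cannot be cancelled by unwinding, because between $m_n$ and $m_{n+1}$ there is only room (in $\mathrm{hsiz}$) for $\eta/2$ worth of net new fjord. One will need to invoke the geometric fact (implicit in the discussion after \eqref{xsx1}) that the curve has "unwound'' by $t_{m_{n+1}}$, so the $L_j$'s from forced turns essentially pair up into caps each of bounded net contribution, leaving only the free-step SRW as the source of genuine fluctuation. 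Handling this carefully — rather than the routine reflection-principle estimate — is where the real work lies.
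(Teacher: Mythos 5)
Your approach takes a genuinely different route from the paper's, and unfortunately it contains two gaps, both centered on the very subtlety you yourself flagged at the end.

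First, the step-count claim. You want the reflection-principle bound to produce an exponential tail, and for that you invoke that ``the number of lattice steps [in a block] is bounded with exponential tails since each step advances capacity by a mesh-dependent amount.'' This is false. A lattice step does \emph{not} advance half-plane capacity (or $\mathrm{hsiz}$) by a uniformly positive amount: if the tip is deep inside a fjord or near the inner part of a spiral, the capacity increment of an additional step can be arbitrarily small. Indeed, this is precisely the phenomenon the paper highlights just after \eqref{xsx1}: ``when the curve is winding a lot, the $\mathrm{hsiz}$ does not change since the winded parts would be contained in the union of discs.'' So the number of steps between consecutive $m_n$'s is \emph{not} controlled by the $\eta/2$ cap, and your SRW estimate for the free-vertex contribution has no foundation.

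Second, the non-free-vertex claim. You assert that a net accumulation of $l/2$ forced same-sign turns produces a spiral ``whose $\mathrm{hsiz}$ grows at least linearly in the number of forced same-sign turns,'' contradicting the cap. This has the geometry backwards: a spiral on the lattice necessarily contracts (the path cannot cross itself), and the inner windings of such a spiral lie inside the discs already generated by its outer windings, so the $\mathrm{hsiz}$ \emph{stabilizes} rather than grows linearly. Again this is exactly the paper's stated observation, and it means the $\eta/2$ cap cannot by itself rule out large winding; a probabilistic input is required.

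The paper's actual argument avoids both issues by treating all turns uniformly and going directly to the percolation structure: a winding of size $l$ forces the path through at least order $l$ \emph{disjoint} rectangles of macroscopic (mesh-independent) dimensions, and in each such rectangle, by the independence of the percolation configurations in disjoint regions, there is a uniformly positive probability that the configuration blocks the spiral from continuing. Multiplying these gives $\mathbb{P}[|W_{k,m_n}|\geq l \mid \cdot]\leq C_1 e^{-C_2 l}$ at once, with no need to separate free from forced turns or to count steps. Your decomposition into free and non-free contributions is an appealing idea, and your identification of the winding-versus-$\mathrm{hsiz}$ tension is exactly the right thing to worry about, but the resolution you propose for each piece does not hold; you would need to replace both with something like the paper's disjoint-rectangle independence argument, at which point the free/non-free split becomes unnecessary.
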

\begin{proof}
By the choice of $m_{n}$ and the definition of $\mathrm{hsiz}$, we must have $|W_{m_{n}-1,0}|<K$ for some constant $K>0$ depending only on the domain $D$. 

Since $W_{k,n}$ is the winding, this means that as $W_{k,n}$ gets large, the curve will spiral. This implies that the +CBP path passes through at least $W_{k,n}$ rectangles whose width is at least $\delta$ and whose length is uniformly bounded below. Hence, by independence in each disjoint rectangle, we have
\[\mathbb{P}[|W_{k,n}|\geq l|\mathcal{F}_{\varsigma_{\delta}(t_{n})},\mathfrak{A}_{k}^{+}]\leq C_{1}e^{-C_{2}l}\]
for some $C_{1},C_{2}>0$ not depending on $k$, $n$ or $l$.
\end{proof}
By Corollary \ref{cor1}, for any $n$ such that $t_{m_{n}}\leq T$ and $k$ such that $t_{k}\in[t_{m_{n-1}},t_{m_{n}}]$ we can find $V_{t_{k}}(t_{m_{n}})$ such that $V_{t_{k}}(t_{m_{n}})$ is independent of $\mathcal{F}_{\varsigma_{\delta}(t_{k})}$ and if we let
\begin{equation}U_{k}(t_{m_{n}})\triangleq (a_{k}(t_{m_{n}})-b_{k}(t_{m_{n}}))-V_{t_{k}}(t_{m_{n}}),\label{req1}\end{equation}
then $|U_{k}(t_{m_{n}})|\leq C\delta^{\alpha/2}$ almost surely for $C$ not depending on $n$, $k$ and $\delta$.

Using (\ref{weq6}), (\ref{zeq7}) and (\ref{zeq11}), we can write
\begin{eqnarray*}
&&\xi _{t_{m_{n}}}-\xi _{t_{m_{n-1}}}\\
&=&R_{t_{m_{n-1}}}(t_{m_{n}})-R_{t_{m_{n-1}}}(t_{m_{n-1}})+\frac{1}{2}\big[\sum_{k=m_{n-1}+1}^{m_{n}}L_{k}(a_{k}(t_{m_{n}})-b_{k}(t_{m_{n}}))\big].
\\&=&R_{t_{m_{n-1}}}(t_{m_{n}})-R_{t_{m_{n-1}}}(t_{m_{n-1}})+\frac{1}{2}\big[\sum_{k=m_{n-1}+1}^{m_{n}}L_{k}(V_{t_{k}}(t_{m_{n}})+U_{k}(t_{m_{n}}))\big].
\\&=&R_{t_{m_{n-1}}}(t_{m_{n}})-R_{t_{m_{n-1}}}(t_{m_{n-1}})+H_{n},
\end{eqnarray*}
where
\begin{equation}H_{n}\triangleq\frac{1}{2}\big[\sum_{k=m_{n-1}+1}^{m_{n}}L_{k}(a_{k}(t_{m_{n}})-b_{k}(t_{m_{n}}))\big]=\frac{1}{2}\big[\sum_{k=m_{n-1}+1}^{m_{n}}L_{k}(V_{t_{k}}(t_{m_{n}})+U_{k}(t_{m_{n}}))\big]\label{beq1}.\end{equation}

\begin{lemma}\label{mel3}
For any sufficiently small $\epsilon>0$,
\begin{equation}
\Big{|}\mathbb{E}\big[\sum_{k=m_{n-1}+1}^{m_{n}}L_{k}U_{k}(t_{m_{n}})|\mathcal{F}_{\varsigma_{\delta}(t_{m_{n-1}})},\mathfrak{A}_{m_{n}}^+\big]
\Big{|}\leq C\delta^{\frac{\alpha}{2}-\epsilon}\label{req3}\end{equation}
for some constant $C>0$ not depending on $n$ or $\delta$.
\end{lemma}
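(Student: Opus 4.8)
The plan is to bound the conditional expectation of $\sum_{k=m_{n-1}+1}^{m_n} L_k U_k(t_{m_n})$ by exploiting the two facts already at our disposal: first, the pointwise bound $|U_k(t_{m_n})|\le C\delta^{\alpha/2}$ a.s.\ from Corollary~\ref{cor1} (equivalently from \eqref{req1}); and second, the fact that the number of terms in the sum, $m_n - m_{n-1}$, cannot be too large. The naive triangle-inequality bound gives $|\sum L_k U_k| \le (m_n-m_{n-1})\cdot C\delta^{\alpha/2}$, so everything hinges on controlling $m_n - m_{n-1}$. By \eqref{xsx1} we have $t_{m_n} - t_{m_{n-1}} < \frac{7}{2\pi}\eta$, i.e.\ the half-plane capacity increment over the block is $O(\eta)$; since each vertex-step of the path on the mesh-$\delta$ lattice increments the half-plane capacity by at least a constant multiple of $\delta^2$ (a standard estimate: a step of Euclidean size $\delta$ near the tip adds hcap of order $\delta^2$), this forces $m_n - m_{n-1} \le C\eta/\delta^2$.

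First I would make that counting estimate precise, getting $m_n - m_{n-1} \le C\eta\delta^{-2}$, and then choose $\eta = \eta(\delta)$ appropriately — say $\eta = \delta^{2-2\epsilon'}$ for a small $\epsilon' > 0$ to be matched against the desired exponent — so that $m_n - m_{n-1} \le C\delta^{-2\epsilon'}$. Combined with the pointwise bound this already yields
\[
\Big|\sum_{k=m_{n-1}+1}^{m_n} L_k U_k(t_{m_n})\Big| \le C\delta^{\alpha/2 - 2\epsilon'} \quad \text{a.s.},
\]
and taking conditional expectation preserves the bound. Relabelling $\epsilon = 2\epsilon'$ (and noting any smaller $\epsilon$ works, since the constant $C$ can absorb the change) gives exactly \eqref{req3}. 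One should double-check that the choice of $\eta$ here is consistent with the ``$\eta$ sufficiently large compared with $\delta$'' hypothesis under which \eqref{xsx1} was derived; if $\delta^{2-2\epsilon'}$ is not large enough relative to $\delta$, one instead keeps $\eta$ as a free parameter, carries $m_n - m_{n-1}\le C\eta\delta^{-2}$ through, and optimizes at the end — the point being that $\eta$ will ultimately be chosen as a small power of $\delta$ so that the blocks contain polynomially-in-$\delta$ many steps but the capacity increments stay small.

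The main obstacle I anticipate is the lower bound on the per-step half-plane capacity increment, i.e.\ justifying $t_k - t_{k-1} \ge c\delta^2$ uniformly. This is true but requires care: when the curve winds tightly, individual steps may contribute very little to $\mathrm{hcap}$, and indeed that is precisely the phenomenon exploited in the definition of the stopping times $m_n$ via $\mathrm{hsiz}$. The clean way around this is to work with $\mathrm{hsiz}$ rather than $\mathrm{hcap}$ directly and use \eqref{hsiz}: over the block $[t_{m_{n-1}}, t_{m_n}]$ the quantity $\mathrm{hsiz}[f_{t_{m_{n-1}}}(\gamma[t_{m_{n-1}}, t_j])]$ grows from $0$ to at most $\eta/2$ plus one step's worth, and each new vertex of the path (being at mesh-$\delta$ scale, after conformally mapping back by $\phi_{\mathbf D}$ which has bounded derivative away from the marked points on the relevant compact set) adds at least a constant multiple of $\delta^2$ to $\mathrm{hsiz}$ whenever it is not already swallowed — but winding vertices are swallowed, so one must argue that at most a bounded number of consecutive vertices can be ``wasted'' before the $\mathrm{hsiz}$ must increase, using Lemma~\ref{turn1} (the exponential tail on the winding $W_{k,n}$) to say the path cannot wind indefinitely without advancing. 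Assembling this gives $m_n - m_{n-1} \le C\eta\delta^{-2}$ up to logarithmic or lower-order corrections, which is absorbed into the $\delta^{-\epsilon}$ slack, and completes the proof.
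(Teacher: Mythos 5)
Your approach has a genuine gap, and it is located exactly where you anticipated trouble: the counting estimate $m_n-m_{n-1}\leq C\eta\delta^{-2}$. The quantity $\mathrm{hsiz}$ in the definition of $m_n$ is computed for the \emph{conformal image} of the new piece of curve under $g_{t_{m_{n-1}}}$, and this map can contract a $\delta$-step to something far smaller than $\delta$ — e.g.\ when the path runs into a fjord of width $\sim\delta$ and depth $\sim 1$ created by its own past, the image of the tip has diameter of order $e^{-c/\delta}$ by a standard extremal-length estimate, so a single step adds essentially nothing to $\mathrm{hsiz}$ and the block can contain far more than $\eta\delta^{-2}$ steps. Your proposed repair (only boundedly many consecutive vertices are ``wasted'', by the winding tail of Lemma~\ref{turn1}) does not address this, because the obstruction is conformal distortion, not winding. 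Moreover, even if the counting bound held, your final estimate is really $C\eta\delta^{\alpha/2-2}$, which yields $C\delta^{\alpha/2-\epsilon}$ only when $\eta\leq\delta^{2-\epsilon}$; but the rest of the argument (Theorem~\ref{mel2}) requires $\eta=\min\{\delta^{\alpha/2-\epsilon'},\delta^{1/2-\epsilon'}\}\gg\delta^{2}$, since the per-block error must be summed over $\sim T/\eta$ blocks. So the triangle-inequality route cannot produce the lemma for the $\eta$ actually used.

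The missing idea is summation by parts. Write $\Upsilon_k(t_{m_n})=U_k(t_{m_n})-U_{k+1}(t_{m_n})$; since $U_{m_n}(t_{m_n})=0$, exchanging the order of summation gives
\[
\sum_{k=m_{n-1}+1}^{m_n}L_kU_k(t_{m_n})=\sum_{k=m_{n-1}+1}^{m_n-1}\Upsilon_k(t_{m_n})\,W_{k,m_{n-1}}.
\]
The increments $\Upsilon_k$ telescope, so $\big|\sum_k\Upsilon_k(t_{m_n})\big|=|U_{m_{n-1}+1}(t_{m_n})|\leq C\delta^{\alpha/2}$ \emph{regardless of how many terms the block contains}; the only potentially large factor is the winding $W_{k,m_{n-1}}$, which by Lemma~\ref{turn1} has an exponential tail. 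Truncating the winding at $N\approx\delta^{-\epsilon}$ and using the tail for $|W|>N$ gives $C(\delta^{\alpha/2-\epsilon}+\delta^{\alpha/2}e^{-C_2\delta^{-\epsilon}})\leq C\delta^{\alpha/2-\epsilon}$. This is how the paper proceeds, and it is the step your proposal needs.
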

\begin{proof}
We write
\[\Upsilon_{k}(t_{m_{n}})\triangleq U_{k}(t_{m_{n}})-U_{k+1}(t_{m_{n}})\]
and hence
\[U_{j}(t_{m_{n}})=\sum_{k=j}^{m_{n}-1}\Upsilon_{k}(t_{m_{n}})\]
since $U_{m_{n}}(t_{m_{n}})=0$. By exchanging the order of summation, we get
\[\sum_{k=m_{n-1}+1}^{m_{n}}L_{k}U_{k}(t_{m_{n}})
=\sum_{k=m_{n-1}+1}^{m_{n}-1}\Upsilon_{k}(t_{m_{n}})W_{k,m_{n-1}}\]
Hence we get
 \begin{eqnarray*}
&&\mathbb{E}\big[\sum_{k=m_{n-1}+1}^{m_{n}}L_{k}U_{k}(t_{m_{n}})|\mathcal{F}_{\varsigma_{\delta}(t_{m_{n-1}})},\mathfrak{A}_{m_{n}}^{+}\big]\\
 &=& \mathbb{E}\big[\sum_{k=m_{n-1}+1}^{m_{n}-1}\Upsilon_{k}(t_{m_{n}})W_{k,m_{n-1}}|\mathcal{F}_{\varsigma_{\delta}(t_{m_{n-1}})},\mathfrak{A}^{+}_{m_{n}}\big]\\
 &=& \mathbb{E}\big[\sum_{k=m_{n-1}+1}^{m_{n}-1}\Upsilon_{k}(t_{m_{n}})\sum_{l=-\infty}^{\infty} l\mathbb{I}_{W_{k,m_{n-1}}=l}|\mathcal{F}_{\varsigma_{\delta}(t_{m_{n-1}})},\mathfrak{A}^{+}_{m_{n}}\big]
 \\&=&\mathbb{E}\Big[\sum_{k=m_{n-1}+1}^{m_{n}-1}\Upsilon_{k}(t_{m_{n}})\Big[\big(\sum_{l=-N}^{N} l\mathbb{I}_{W_{k,m_{n-1}}=l}\big)+\big(\sum_{|l|\geq N+1} l\mathbb{I}_{W_{k,m_{n-1}}=l}\big)\Big]
 |\mathcal{F}_{\varsigma_{\delta}(t_{m_{n-1}})},\mathfrak{A}^{+}_{m_{n}}\Big],
\end{eqnarray*}
for suitable choice of $N\in\mathbb{N}$. Thus
 \begin{eqnarray*}
&&\Big|\mathbb{E}\big[\sum_{k=m_{n-1}+1}^{m_{n}}L_{k}U_{k}(t_{m_{n}})|\mathcal{F}_{\varsigma_{\delta}(t_{m_{n-1}})},\mathfrak{A}_{m_{n}}^{+}\big]\Big|\\ &\leq&\mathbb{E}\Big[\big|\sum_{k=m_{n-1}+1}^{m_{n}-1}\Upsilon_{k}(t_{m_{n}})\big|\Big[\big|\sum_{l=-N}^{N} l\mathbb{I}_{W_{k,m_{n-1}}=l}\big|\\
&&\qquad +\big|\sum_{|l|\geq N+1} l\mathbb{I}_{W_{k,m_{n-1}}=l}\big|\Big]
 |\mathcal{F}_{\varsigma_{\delta}(t_{m_{n-1}})},\mathfrak{A}^{+}_{m_{n}}\Big].
\end{eqnarray*}
Also, by Corollary \ref{cor1}, we have
\[\big|\sum_{k=m_{n-1}+1}^{m_{n}-1}\Upsilon_{k}(t_{m_{n}})\big|=\big|U_{t_{m_{n-1}}}(t_{m_{n}})\big|\leq C_{3}\delta^{\frac{\alpha}{2}}\]
for some constant $C_{3}>0$ not depending on $n$ or $\delta$.
Hence, if we let $N$ be the smallest integer greater than $\delta^{-\epsilon}$ for some sufficiently small $\epsilon>0$, then using Lemma \ref{turn1}, we have
\[\big|\mathbb{E}\big[\sum_{k=m_{n-1}+1}^{m_{n}}L_{k}U_{k}(t_{m_{n}})|\mathcal{F}_{\varsigma_{\delta}(t_{m_{n-1}})},\mathfrak{A}^{+}_{m_{n}}\big]\big|\leq C_{4}( \delta^{\frac{\alpha}{2}-\epsilon}+ \delta^{\frac{\alpha}{2}}e^{-C_{2}\delta^{-\epsilon}})
\leq C \delta^{\frac{\alpha}{2}-\epsilon}\]
for some constants $C_{4}, C >0$ not depending on $n$ and $\delta$.
 \end{proof}
 \ \\
We now consider the other part of $H_{n}$,
\[\sum_{k=m_{n-1}+1}^{m_{n}}L_{k}V_{t_{k}}(t_{m_{n}}).\]
We can decompose $V_{t_{k}}(t)$ as
\begin{eqnarray*} V_{t_{k}}(t)&=&V_{\varsigma_{\delta}(t_{k})}(\varsigma_{\delta}(t))+(V_{t_{k}}(t)-V_{\varsigma_{\delta}(t_{k})}(\varsigma_{\delta}(t)))
\\&=&V^{(1)}_{t_{k}}(t)+ V^{(2)}_{t_{k}}(t),
\end{eqnarray*}
where
\begin{equation}V^{(1)}_{t_{k}}(t)\triangleq V_{\varsigma_{\delta}(t_{k})}(\varsigma_{\delta}(t))\label{yeq1}\end{equation}
and
\[V^{(2)}_{t_{k}}(t)\triangleq V_{t_k}(t)-V^{(1)}_{t_{k}}(t).\]
Noting that by (4) in Corollary \ref{couple}, we have
\[ |V^{(2)}_{t_{m_{n-1}}}(t_{m_{n}})|\leq C\delta^{\frac{\alpha}{2}}\]
for some constant $C>0$ not depending on $n$ or $\delta$ since $V_{t_{k}}(t)$ is a continuous increasing function and hence is Lipschitz with constant not depending on $\delta$; moreover, this Lipschitz constant is bounded since $V_{t_{k}}(t)$ is the difference of two coupled Bessel processes.
Using this estimate, and applying exactly the same method of proof as in Lemma \ref{mel3}, we can prove the following result.
\begin{lemma}\label{mel5}
For any sufficiently small $\epsilon>0$,
\begin{equation}
\Big{|}\mathbb{E}\big[\sum_{k=m_{n-1}+1}^{m_{n}}L_{k}V_{t_{k}}^{(2)}(t_{m_{n}})|\mathcal{F}_{\varsigma_{\delta}(t_{m_{n-1}})},\mathfrak{A}_{m_{n}}^+\big]
\Big{|}\leq C\delta^{\frac{\alpha}{2}-\epsilon}\label{req4}\end{equation}
for some constant $C>0$ not depending on $n$ or $\delta$.
\end{lemma}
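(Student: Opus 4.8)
The plan is to mimic the proof of Lemma \ref{mel3} almost verbatim, with $U_k(t_{m_n})$ replaced by $V^{(2)}_{t_k}(t_{m_n})$. First I would introduce the first-difference quantities
\[\Theta_k(t_{m_n})\triangleq V^{(2)}_{t_k}(t_{m_n})-V^{(2)}_{t_{k+1}}(t_{m_n}),\]
and note that since $V^{(2)}_{t_{m_n}}(t_{m_n})=V_{t_{m_n}}(t_{m_n})-V_{\varsigma_\delta(t_{m_n})}(\varsigma_\delta(t_{m_n}))=0-0=0$, we may telescope to write $V^{(2)}_{t_j}(t_{m_n})=\sum_{k=j}^{m_n-1}\Theta_k(t_{m_n})$. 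Exchanging the order of summation then gives, exactly as before,
\[\sum_{k=m_{n-1}+1}^{m_{n}}L_{k}V^{(2)}_{t_k}(t_{m_n})=\sum_{k=m_{n-1}+1}^{m_{n}-1}\Theta_k(t_{m_n})\,W_{k,m_{n-1}},\]
so that the left side of (\ref{req4}) is bounded by $\mathbb{E}\big[\,\big|\sum_k\Theta_k(t_{m_n})\big|\cdot\big(\big|\sum_{|l|\le N}l\,\mathbb{I}_{W_{k,m_{n-1}}=l}\big|+\big|\sum_{|l|\ge N+1}l\,\mathbb{I}_{W_{k,m_{n-1}}=l}\big|\big)\,\big|\,\mathcal{F}_{\varsigma_\delta(t_{m_{n-1}})},\mathfrak{A}^+_{m_n}\big]$ for a free parameter $N\in\mathbb{N}$.

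The two ingredients needed are then: (i) a deterministic bound $\big|\sum_{k=m_{n-1}+1}^{m_n-1}\Theta_k(t_{m_n})\big|=|V^{(2)}_{t_{m_{n-1}}}(t_{m_n})|\le C\delta^{\alpha/2}$, which is precisely the estimate stated just above the lemma (coming from (4) of Corollary \ref{couple} together with the Lipschitz property of $t\mapsto V_{t_k}(t)$, which holds with a constant independent of $\delta$ because $V_{t_k}$ is a difference of coupled Bessel processes); and (ii) the exponential tail bound (\ref{brb1}) of Lemma \ref{turn1} applied to $W_{k,m_{n-1}}$, which controls the second sum on the tail $\{|l|\ge N+1\}$ and shows the first (finite) sum is at most $N$. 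Choosing $N$ to be the smallest integer exceeding $\delta^{-\epsilon}$ then yields
\[\Big|\mathbb{E}\big[\textstyle\sum_{k=m_{n-1}+1}^{m_n}L_k V^{(2)}_{t_k}(t_{m_n})\,\big|\,\mathcal{F}_{\varsigma_\delta(t_{m_{n-1}})},\mathfrak{A}^+_{m_n}\big]\Big|\le C'\big(\delta^{\alpha/2-\epsilon}+\delta^{\alpha/2}e^{-C_2\delta^{-\epsilon}}\big)\le C\delta^{\alpha/2-\epsilon},\]
with $C$ independent of $n$ and $\delta$, exactly as in Lemma \ref{mel3}.

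The only genuinely new point — and the step I would be most careful about — is verifying that (i) and (ii) really transfer to $V^{(2)}$. For (i) one must check that $t\mapsto V_{t_k}(t)$ and $t\mapsto V_{\varsigma_\delta(t_k)}(\varsigma_\delta(t))$ are both Lipschitz with a common $\delta$-independent constant on $[0,T]$, so that the time-change discrepancy $|\varsigma_\delta(t)-t|<C\delta^{\alpha/2}$ from Corollary \ref{couple}(4) converts into the claimed bound on $V^{(2)}_{t_{m_{n-1}}}(t_{m_n})$; the paper already asserts this in the paragraph preceding the lemma, so I would simply invoke it. For (ii), Lemma \ref{turn1} is stated for conditioning on $\mathcal{F}_{\varsigma_\delta(t_{m_n})}$ and $\mathfrak{A}^+_k$, which is exactly the conditioning appearing here (with $n-1$ in place of $n$), so it applies directly. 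Everything else is the identical bookkeeping as in Lemma \ref{mel3}, so I would not reproduce it in detail but rather state, as the paper does, that ``applying exactly the same method of proof as in Lemma \ref{mel3}'' gives (\ref{req4}).
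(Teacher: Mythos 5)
Your proposal matches the paper's argument: the paper itself proves Lemma \ref{mel5} only by recording the bound $|V^{(2)}_{t_{m_{n-1}}}(t_{m_{n}})|\leq C\delta^{\alpha/2}$ (from Corollary \ref{couple}(4) and the Lipschitz property of $V$) and then stating that one applies ``exactly the same method of proof as in Lemma \ref{mel3}.'' Your telescoping via $\Theta_k$, the summation exchange producing $W_{k,m_{n-1}}$, and the choice $N\approx\delta^{-\epsilon}$ combined with Lemma \ref{turn1} are precisely that method, so the proposal is correct and essentially identical to the paper's.
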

We now consider
\[\sum_{k=m_{n-1}+1}^{m_{n}}L_{k}V^{(1)}_{t_{k}}(t_{m_{n}}).\]

Firstly, let $\mathcal{J}_{n}\triangleq\{m_{n-1}+1\leq k\leq m_{n}: \mathrm{dist}[Z_{k},\partial( D\setminus\nu[0,m_{n-1}])]\geq 4\delta\}$ where $\nu$ is the +BP. Then we define a sequence of stopping times by $S_{0}\triangleq m_{n-1}+1$, and for %j=1,2,\ldots,N^{*}$
\[S_{2j-1}\triangleq\inf \{m_{n}\geq k\geq S_{2j-2}: k\not\in \mathcal{J}_n\},\]
\[S_{2j}\triangleq\inf \{ m_{n} \geq k\geq S_{2j-1}:k\in \mathcal{J}_n \}.\]
Also let $N_{1}\triangleq\sup\{j:2j+1 \leq N^{*}\}$ and $N_{2}\triangleq\sup\{j:2j+2 \leq N^{*}\}$, where $N^{*}\triangleq\inf\{j:S_{j}=m_{n}\}$.

Then, by construction for $k=S_{2j},\ldots,S_{2j+1}-1$, $Z_{k}$ is not contained in a $4\delta$  neighbourhood of $\partial( D\setminus\nu[0,m_{n-1}])$  and for
$k=S_{2j+1},\ldots,S_{2j+2}-1$, $Z_{k}$ is contained in a $4\delta$  neighbourhood of $\partial( D\setminus\nu[0,m_{n-1}])$. Let $\ell_{k}\triangleq\inf\{l:S_{l}\geq k+1\}$.

Then by telescoping the sum, we can write
\begin{eqnarray*}&&\sum_{k=m_{n-1}+1}^{m_{n}}L_{k}V^{(1)}_{t_{k}}(t_{m_{n}})
\\&=& \sum_{k=m_{n-1}+1}^{m_{n}}L_{k}\Big(V^{(1)}_{t_{k}}(t_{S_{\ell_{k}}})+ \sum_{l=\ell_{k}+1}^{N^{*}}[V^{(1)}_{t_{k}}(t_{S_{l}})-V^{(1)}_{t_{k}}(t_{S_{l-1}})]\Big)
\\&=& \Big[\sum_{k=m_{n-1}+1}^{S_{1}-1} L_{k}V^{(1)}_{t_{k}}(t_{S_{1}})\Big]
\\&&+ \Big[\sum_{l=2}^{N^{*}} \sum_{k=S_{l-1}}^{S_{l}-1} L_{k}V^{(1)}_{t_{k}}(t_{S_{l}})  + \sum_{l=1}^{N^{*}} \sum_{k=m_{n-1}+1}^{S_{l}-1} L_{k}\big(V^{(1)}_{t_{k}}(t_{S_{l}})-V^{(1)}_{t_{k}}(t_{S_{l-1}})\big)\Big]
\\%&=&  \Big[\sum_{k=m_{n-1}+1}^{S_{1}-1} L_{k}V^{(1)}_{t_{k}}(t_{S_{1}})\Big]
%\\&&+ \Big[\sum_{l=2}^{N^{*}} \Big(\sum_{k=S_{l-1}}^{S_{l}-1} L_{k}V^{(1)}_{t_{k}}(t_{S_{l}})  + \sum_{k=m_{n-1}+1}^{S_{l-1}-1} L_{k}\big(V^{(1)}_{t_{k}}(t_{S_{l}})-V^{(1)}_{t_{k}}(t_{S_{l-1}})\big)\Big)\Big]
%\\
&=& \Big(\sum_{l=1}^{N^{*}} J_{l}\Big) +  \Big(\sum_{l=1}^{N^*}\sum_{k=m_{n-1}+1}^{S_{l}-1} L_{k}\big(V^{(1)}_{t_{k}}(t_{S_{l}})-V^{(1)}_{t_{k}}(t_{S_{l-1}})\big)\Big),
\end{eqnarray*}
where  for $l=1,2,3,\ldots,N^{*}$,
\[ J_{l}\triangleq\sum_{k=S_{l-1}}^{S_{l}-1} L_{k}V^{(1)}_{t_{k}}(t_{S_{l}}). \]
Let
\begin{equation}K_{n}\triangleq\sum_{l=1}^{N^*}\sum_{k=m_{n-1}+1}^{S_{l}-1} L_{k}\big(V^{(1)}_{t_{k}}(t_{S_{l}})-V^{(1)}_{t_{k}}(t_{S_{l-1}})\big).\label{beq2}\end{equation}
\begin{lemma}\label{mel6}
For any sufficiently small $\epsilon>0$,
\begin{equation}\Big|\mathbb{E}\Big[\sum_{l=1}^{N^{*}}J_{l}|\mathcal{F}_{\varsigma_{\delta}(t_{m_{n-1}})},\mathfrak{A}^{+}_{m_{n}}\Big]\Big|\leq C\delta^{\frac{1}{2}-\epsilon}\label{req17}\end{equation}
for some constant $C>0$ not depending on $n$ or $\delta$.
\end{lemma}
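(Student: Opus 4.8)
The plan is to estimate $\mathbb{E}[J_l \mid \mathcal{F}_{\varsigma_\delta(t_{m_{n-1}})}, \mathfrak{A}^+_{m_n}]$ for each $l$ separately and then sum. Recall $J_l = \sum_{k=S_{l-1}}^{S_l - 1} L_k V^{(1)}_{t_k}(t_{S_l})$ and that, by construction of the stopping times $S_j$, the index block $[S_{l-1}, S_l - 1]$ is either entirely ``deep'' (all $Z_k$ at distance $\geq 4\delta$ from $\partial(D\setminus\nu[0,m_{n-1}])$) or entirely ``near the boundary.'' I would treat these two cases differently. For the deep blocks (odd $l$, say), the key point is that the vertices $Z_k$ in the block are free vertices with overwhelming probability, so by Lemma \ref{lem3} the turning increments $L_k$ are $\pm 1$ with probability $1/2$ each and independent of the past; this lets me decouple $L_k$ from $V^{(1)}_{t_k}(t_{S_l})$ in conditional expectation, and then a symmetry/martingale argument on the partial sums $\sum L_k$ together with the smallness of the increments $V^{(1)}_{t_k}(t_{S_l}) - V^{(1)}_{t_{k+1}}(t_{S_l})$ (which are $O(\delta)$ by the Lipschitz bound from Corollary \ref{couple}(4), or $O(\delta^{\alpha/2})$ from Corollary \ref{cor1}) gives cancellation down to order $\delta^{1/2 - \epsilon}$ after telescoping in $k$, exactly as in the proof of Lemma \ref{mel3}. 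For the near-boundary blocks (even $l$), one cannot use the symmetry of $(L_k)$, but here the total half-plane-capacity budget is small: by the choice of $m_n$ via $\mathrm{hsiz}$ and the relationship (\ref{hsiz}) between $\mathrm{hsiz}$ and $\mathrm{hcap}$, together with the exponential tail on the winding $W_{k,m_{n-1}}$ from Lemma \ref{turn1}, the number of boundary-excursion steps is controlled, and each $|V^{(1)}_{t_k}(t_{S_l})| \leq C\delta^{\alpha/2}$, so these blocks contribute a total of at most $O(\delta^{\alpha/2 - \epsilon})$ as well.

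More concretely, for a deep block I would write $V^{(1)}_{t_k}(t_{S_l}) = \sum_{j=k}^{S_l - 1} \Upsilon^{(1)}_{j}(t_{S_l})$ where $\Upsilon^{(1)}_j(t_{S_l}) \triangleq V^{(1)}_{t_j}(t_{S_l}) - V^{(1)}_{t_{j+1}}(t_{S_l})$ (using $V^{(1)}_{t_{S_l}}(t_{S_l}) = 0$), exchange the order of summation to get $J_l = \sum_{j=S_{l-1}}^{S_l - 1} \Upsilon^{(1)}_j(t_{S_l}) W_{j, S_{l-1}-1}$, split the inner winding sum $W_{j,S_{l-1}-1}$ into a bounded range $|w| \leq N$ and a tail $|w| \geq N+1$ with $N \sim \delta^{-\epsilon}$, bound the tail using Lemma \ref{turn1}, and on the bounded range exploit that, conditionally on the free-vertex events, $W_{j,S_{l-1}-1}$ is a mean-zero random walk increment sum independent of the (boundary-capacity-measurable) quantities so its conditional expectation essentially vanishes — giving $|\mathbb{E}[J_l \mid \cdots]| \leq C(\delta^{1/2-\epsilon} \cdot (\text{block length in capacity}) + \delta^{\alpha/2} e^{-C_2 \delta^{-\epsilon}})$. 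Summing over the $N_1 + 1$ deep blocks, whose total capacity is bounded by $t_{m_n} - t_{m_{n-1}} < \frac{7}{2\pi}\eta$ by (\ref{xsx1}), keeps the aggregate at order $\delta^{1/2 - \epsilon}$ provided $\eta$ is chosen appropriately (and the exponential error is negligible).

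The handling of the non-free vertices is what forces the weaker exponent $\frac12 - \epsilon$ rather than $\frac{\alpha}{2} - \epsilon$ in the statement, and matching these two sources of error is a bookkeeping matter; I would present the deep-block bound first and absorb the boundary contribution into it. The main obstacle I anticipate is making rigorous the claim that, conditionally on $\mathcal{F}_{\varsigma_\delta(t_{m_{n-1}})}$ and on the restriction events $\mathfrak{A}^+_{m_n}$, the partial winding sums $W_{j, S_{l-1}-1}$ restricted to a deep block are genuinely (approximately) independent of the multiset $\{\Upsilon^{(1)}_j(t_{S_l})\}$ and have (approximately) zero conditional mean: the conditioning on $\mathfrak{A}^+_{m_n}$ and on the stopping-time structure $S_j$ couples the turning sequence to the geometry in a subtle way, and one must argue — via Lemma \ref{lem3} and the locality property of the underlying percolation exploration path — that this coupling only affects steps within distance $O(\delta)$ of the boundary, which are precisely the steps excluded from the deep blocks. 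Once that decoupling is justified, the rest is a repetition of the telescoping-and-tail-splitting estimate already carried out in Lemmas \ref{mel3} and \ref{mel5}.
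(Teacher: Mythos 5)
Your overall architecture matches the paper's: split the block $[m_{n-1}+1,m_n]$ into the excursions delimited by the $S_j$, handle the blocks far from $\partial(D\setminus\nu[0,m_{n-1}])$ by a winding-symmetry argument and the blocks within $4\delta$ of it by smallness of $V^{(1)}$, with the telescoping-and-tail-splitting machinery of Lemma \ref{mel3}. But there is a concrete gap in your treatment of the near-boundary blocks. You bound $|V^{(1)}_{t_k}(t_{S_l})|$ there by $C\delta^{\alpha/2}$ and conclude a contribution of order $\delta^{\alpha/2-\epsilon}$; this is the wrong estimate and the wrong exponent. The quantity $V^{(1)}_{t_k}$ is the limiting (SLE-side) preimage spread, and its smallness for vertices within $4\delta$ of the boundary is a purely conformal-geometric fact — the paper invokes Lemma 4.8 of \cite{LMR} (in the spirit of the $\mathrm{diam}(f_s^{-1}(\cdot))\lesssim(\mathrm{diam}\cdot\sup\mathrm{Im})^{1/2}$ bound) to get $|V^{(1)}_{t_k}(t_{S_l})|\leq C\delta^{1/2}$ — not the coupling-error bound $C\delta^{\alpha/2}$ of Corollary \ref{cor1}, which controls $U_k$, a different object. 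Since $\alpha$ is the (small) polynomial rate from \cite{BCL}/\cite{MNW}, $\delta^{\alpha/2-\epsilon}\gg\delta^{1/2-\epsilon}$, so your bound does not yield the statement; your closing remark that $\tfrac12-\epsilon$ is the ``weaker exponent'' has the inequality backwards. The $\delta^{1/2}$ in the lemma \emph{is} the near-boundary preimage estimate, and without it the proof does not close.

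On the deep blocks, you correctly identify the hard point — decoupling $W_{j,S_{l-1}-1}$ from the increments $\Upsilon^{(1)}_j$ under the conditioning on $\mathfrak{A}^+_{m_n}$ and the stopping-time structure — but you defer it, and this is where the paper does its real work. The paper does not settle for an approximate per-block bound summed over excursions: it decomposes each increment $\Psi_k(t_{j+1})-\Psi_k(t_j)$ according to whether $Z_j$ is free or non-free, shows that in either case the conditional expectation given $\mathcal{F}_{\varsigma_\delta(t_j)}$ and $A^+_j$ equals the conditional expectation given $A^+_j$ alone (so the $\Psi$-factors become deterministic given $\mathfrak{A}^+_{S_l}$ and factor out of the sum), and then uses locality plus left/right symmetry — again split over free and non-free vertices — to get $\mathbb{E}[W_{k,S_{l-1}-1}\,|\,\mathcal{F}_{\varsigma_\delta(t_{S_{l-1}})},\mathfrak{A}^+_{S_l}]=0$ exactly. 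Hence the deep blocks contribute identically zero, and no summation over the (a priori uncontrolled number of) excursions is needed for the main term. Your version, which keeps an $O(\delta^{1/2-\epsilon}\cdot\text{capacity})$ error per deep block, both presupposes a smallness of $V^{(1)}$ that is false away from the boundary and leaves the essential exchangeability/zero-mean claim unproved.
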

\begin{proof}
Note that for $k=S_{2j+1},\ldots, S_{2j+2}-1$,  $Z_{k}$ are contained in a $4\delta$ neighbourhood of $\partial( D\setminus\nu[0,m_{n-1}])$. Hence by Lemma 4.8 in \cite{LMR}, we have
\[|V_{t_{k}}^{(1)}(t_{S_{l}})| \leq C_{2}\delta^{\frac{1}{2}}\]
for some constant $C_{2}>0$ not depending on $n$.
Using this estimate, and applying exactly the same method of proof as in Lemma \ref{mel3}, we get
\begin{equation}
\big{|}\mathbb{E}\big[\sum_{j=0}^{N_{2}}J_{2j+1}|\mathcal{F}_{\varsigma_{\delta}(t_{m_{n-1}})},\mathfrak{A}_{m_{n}}^+\big]
\big{|}\leq C_{3}\delta^{\frac{1}{2}-\epsilon} \label{resul1}\end{equation}
for some constant $C_{3}>0$ not depending on $n$ and for any sufficiently small $\epsilon>0$.

\ \\
We now consider,
\[ \sum_{j=0}^{N_{1}}J_{2j+2}.\]
Suppose that $l=2j+2$ and define
\[\Psi_{k}(t)\triangleq V^{(1)}_{t_{k}}(t)-V^{(1)}_{t_{k+1}}(t)\]
and hence, since $V^{(1)}_{t_{S_{l}}}(t_{S_{l}})=0$,
\begin{equation}V^{(1)}_{t_{j}}(t_{S_{l}})=\sum_{k=j}^{S_{l}-1}\Psi_{k}(t_{S_{l}}).\label{mbm5}\end{equation}
From the Loewner differential equation, we can see that $\Psi_{k}(t)$ is decreasing.

By exchanging the order of summation in (\ref{mbm5}), we get
\begin{equation}\sum_{k=S_{l-1}}^{S_{l}-1}L_{k}V^{(1)}_{t_{k}}(t_{S_{l}})=\sum_{k=S_{l-1}}^{S_{l}-1}\Psi_{k}(t_{S_{l}})W_{k,S_{l-1}-1}.
\label{mbm1}\end{equation}
Hence taking conditional expectations in (\ref{mbm1}) and using the tower property, we get
\begin{eqnarray*}
&&\mathbb{E}\big[\sum_{k=S_{l-1}}^{S_{l}-1}L_{k}V^{(1)}_{t_{k}}(t_{S_{l}}) |\mathcal{F}_{\varsigma_{\delta}(t_{S_{l-1}})}, \mathfrak{A}^{+}_{S_{l}}  \big]
\\%&=&\mathbb{E}\big[\sum_{k=S_{l-1}+1}^{S_{l}-1}\Psi_{k}(t_{S_{l}})W_{k,S_{l-1}}  |\mathcal{F}_{\varsigma_{\delta}(t_{S_{l-1}})}, \mathfrak{A}^{+}_{S_{l}}  \big]
%\\
&=& \mathbb{E}\big[ \sum_{k=S_{l-1}}^{S_{l}-1}  W_{k,S_{l-1}-1}\big( \Psi_{k}(t_{k+1})+\sum_{j=k+1}^{S_{l}-1}[\Psi_{k}(t_{j+1})-\Psi_{k}(t_{j})]\big) |\mathcal{F}_{\varsigma_{\delta}(t_{S_{l-1}})}, \mathfrak{A}^{+}_{S_{l}}  \big]
\\&=& \mathbb{E}\big[\sum_{k=S_{l-1}}^{S_{l}-1}   W_{k,S_{l-1}-1}\big(\mathbb{E}\big[\Psi_{k}(t_{k+1})|\mathcal{F}_{\varsigma_{\delta}(t_{k})},A^{+}_{k}\big]
\\&&+\sum_{j=k+1}^{S_{l}-1}\mathbb{E}\big[\Psi_{k}(t_{j+1})-\Psi_{k}(t_{j}) |\mathcal{F}_{\varsigma_{\delta}(t_{j})},A^{+}_{j}\big]\big)  |\mathcal{F}_{\varsigma_{\delta}(t_{S_{l-1}})}, \mathfrak{A}^{+}_{S_{l}}  \big].
\end{eqnarray*}

We can decompose
\begin{eqnarray}&&\Psi_{k}(t_{j+1})-\Psi_{k}(t_{j})\nonumber \\&=& \big(\Psi_{k}(t_{j+1})-\Psi_{k}(t_{j})\big)\mathbb{I}_{\{Z_{j} \text{ is free}\}}+\big(\Psi_{k}(t_{j+1})-\Psi_{k}(t_{j})\big)\mathbb{I}_{\{Z_{j} \text{ is not free}\}}.\label{mbm3}\end{eqnarray}
Since $\Psi_{k}(t_{j+1})-\Psi_{k}(t_{j})$ and $A^{+}_{j+1}$ given that $Z_{j}$ is free are independent of $\mathcal{F}_{\varsigma_{\delta}(t_{j})}$ (by Lemma \ref{lem3} and Corollaries \ref{couple} and \ref{cor1}) we get
\begin{eqnarray*}&&\mathbb{E}\big[\big(\Psi_{k}(t_{j+1})-\Psi_{k}(t_{j})\big)\mathbb{I}_{\{Z_{j} \text{ is free}\}}|\mathcal{F}_{\varsigma_{\delta}(t_{j})},A^{+}_{j}\big]\\&=&\mathbb{I}_{\{Z_{j} \text{ is free}\}}\mathbb{E}\big[\Psi_{k}(t_{j+1})-\Psi_{k}(t_{j})|\mathcal{F}_{\varsigma_{\delta}(t_{j})},A^{+}_{j}\big]
\\&=&\mathbb{I}_{\{Z_{j} \text{ is free}\}}\mathbb{E}\big[\Psi_{k}(t_{j+1})-\Psi_{k}(t_{j})|A^{+}_{j}\big]
\end{eqnarray*}
Given that $Z_{j}$ is non-free, $A^{+}_{j}$ is \emph{not} independent of $\mathcal{F}_{\varsigma_{\delta}(t_{j})}$. However, we have
\begin{eqnarray*}&&\mathbb{E}\big[\big(\Psi_{k}(t_{j+1})-\Psi_{k}(t_{j})\big)\mathbb{I}_{\{Z_{j} \text{ is not free}\}}|\mathcal{F}_{\varsigma_{\delta}(t_{j})},A^{+}_{j}\big]\\
&=&\mathbb{I}_{\{Z_{j} \text{ is not free}\}}\mathbb{E}\big[\Psi_{k}(t_{j+1})-\Psi_{k}(t_{j})|\mathcal{F}_{\varsigma_{\delta}(t_{j})},A_{j}^{+}\big]
\\
&=&\mathbb{I}_{\{Z_{j} \text{ is not free}\}}\mathbb{E}\big[\Psi_{k}(t_{j+1})-\Psi_{k}(t_{j})|\mathcal{F}_{\varsigma_{\delta}(t_{j})}\big]
\\
&=&\mathbb{I}_{\{Z_{j} \text{ is not free}\}}\mathbb{E}\big[\Psi_{k}(t_{j+1})-\Psi_{k}(t_{j})\big]
\\&=&\mathbb{I}_{\{Z_{j} \text{ is not free}\}}\mathbb{E}\big[\Psi_{k}(t_{j+1})-\Psi_{k}(t_{j})|A_{j}^{+}\big].
\end{eqnarray*}
where the fourth equality follows from the fact that, by definition of $A^{+}_{j}$, when $Z_{j}$ is non-free, $A_{j}^{+}$ contains no additional information and hence $(\Psi_{k}(t_{j+1})-\Psi_{k}(t_{j}))|A^{+}_{j}$ is identically distributed to $\Psi_{k}(t_{j+1})-\Psi_{k}(t_{j})$; similarly the second equality follows from the fact that given $\mathcal{F}_{\varsigma_{\delta}(t_{j})}$, $(\Psi_{k}(t_{j+1})-\Psi_{k}(t_{j}))|A^{+}_{j}$ is identically distributed to $\Psi_{k}(t_{j+1})-\Psi_{k}(t_{j})$; and the third equality follows from Corollary \ref{cor1}.

Combining these relations with (\ref{mbm3}), we get
\[ \mathbb{E}\big[\Psi_{k}(t_{j+1})-\Psi_{k}(t_{j})|\mathcal{F}_{\varsigma_{\delta}(t_{j})},A^{+}_{j}\big]= \mathbb{E}\big[\Psi_{k}(t_{j+1})-\Psi_{k}(t_{j})|A^{+}_{j}\big]. \]
Similarly, we can show that
 $\mathbb{E}\big[\Psi_{k}(t_{k+1})|\mathcal{F}_{\varsigma_{\delta}(t_{k})},A^{+}_{k}\big]= \mathbb{E}\big[\Psi_{k}(t_{k+1})|A^{+}_{k}\big]. $
Hence
\begin{eqnarray*}&&\mathbb{E}\big[\sum_{k=S_{l-1}}^{S_{l}-1}L_{k}V^{(1)}_{t_{k}}(t_{S_{l}}) |\mathcal{F}_{\varsigma_{\delta}(t_{S_{l-1}})}, \mathfrak{A}^{+}_{S_{l}}  \big]
\\&=&\mathbb{E}\big[\sum_{k=S_{l-1}}^{S_{l}-1}   W_{k,S_{l-1}-1}\big(\mathbb{E}\big[\Psi_{k}(t_{k+1})|\mathcal{F}_{\varsigma_{\delta}(t_{k})},A^{+}_{k}\big]
\\&&+\sum_{j=k+1}^{S_{l}-1}\mathbb{E}\big[\Psi_{k}(t_{j+1})-\Psi_{k}(t_{j}) |\mathcal{F}_{\varsigma_{\delta}(t_{j})},A^{+}_{j}\big]\big)  |\mathcal{F}_{\varsigma_{\delta}(t_{S_{l-1}})}, \mathfrak{A}^{+}_{S_{l}}  \big]
\\&=& \mathbb{E}\big[\sum_{k=S_{l-1}}^{S_{l}-
1}   W_{k,S_{l-1}-1} \big(\mathbb{E}\big[\Psi_{k}(t_{k+1})|A^{+}_{k}\big]
\\&&+\sum_{j=k+1}^{S_{l}-1}\mathbb{E}\big[\Psi_{k}(t_{j+1})-\Psi_{k}(t_{j}) |A_{j}^{+}\big]\big) |\mathcal{F}_{\varsigma_{\delta}(t_{S_{l-1}})}, \mathfrak{A}^{+}_{S_{l}}  \big]
\\&=& \mathbb{E}\big[\sum_{k=S_{l-1}}^{S_{l}-
1}   W_{k,S_{l-1}-1} \big(\mathbb{E}\big[\Psi_{k}(t_{k+1})|\mathfrak{A}^{+}_{S_{l}} \big]
\\&&+\sum_{j=k+1}^{S_{l}-1}\mathbb{E}\big[\Psi_{k}(t_{j+1})-\Psi_{k}(t_{j}) |\mathfrak{A}^{+}_{S_{l}} \big]\big) |\mathcal{F}_{\varsigma_{\delta}(t_{S_{l-1}})}, \mathfrak{A}^{+}_{S_{l}}  \big]
\\&=& \mathbb{E}\big[\sum_{k=S_{l-1}}^{S_{l}-1}   W_{k,S_{l-1}-1} \mathbb{E}\big[\Psi_{k}(t_{S_{l}}) |\mathfrak{A}_{S_{l}}^{+}\big] |\mathcal{F}_{\varsigma_{\delta}(t_{S_{l-1}})}, \mathfrak{A}^{+}_{S_{l}}  \big]
\\&=&\sum_{k=S_{l-1}+1}^{S_{l}-1}   \mathbb{E}\big[\Psi_{k}(t_{S_{l}}) |\mathfrak{A}_{S_{l}}^{+}\big]\mathbb{E}\big[ W_{k,S_{l-1}-1} |\mathcal{F}_{\varsigma_{\delta}(t_{S_{l-1}})}, \mathfrak{A}^{+}_{S_{l}} \big]
\end{eqnarray*}
where the third equality follows from the definition of $\mathfrak{A}^{+}_{S_{l}}$.

Then note that for $l$ even, between $S_{l-1}$ and $S_l-1$, the path does not lie within a $4\delta$ neighbourhood of the boundary. The locality property of the $+$BP (which is inherited from the HexP) implies that the behaviour of $Z_{j}$ in a neighbourhood around $Z_{j}$ does not depend on the boundary. Hence, we must have by symmetry that
\[\mathbb{E}\big[ W_{k,S_{l-1}-1} |\mathcal{F}_{\varsigma_{\delta}(t_{S_{l-1}})} \big]=0.\]
We can write
\[ W_{k,S_{l-1}-1}=\sum_{j=S_{l-1}}^{k}\big(L_{j}\mathbb{I}_{\{Z_{j} \text{ is free}\}}+L_{j}\mathbb{I}_{\{Z_{j} \text{ is non-free}\}}\big)\]
By symmetry at the free vertices,
\[\mathbb{E}\big[ \sum_{j=S_{l-1}}^{k}L_{j}\mathbb{I}_{\{Z_{j} \text{ is free}\}} |\mathcal{F}_{\varsigma_{\delta}(t_{S_{l-1}})} \big]=0.\]
Hence we have
\[\mathbb{E}\big[\sum _{j=S_{l-1}}^{k}L_{j}\mathbb{I}_{\{Z_{j} \text{ is non-free}\}} |\mathcal{F}_{\varsigma_{\delta}(t_{S_{l-1}})} \big]=0.\]
Since at the non-free vertices, $A_{j}^{+}$ contains no relevant information, this implies that
\[\mathbb{E}\big[ \sum_{j=S_{l-1}}^{k}L_{j}\mathbb{I}_{\{Z_{j} \text{ is non-free}\}} |\mathcal{F}_{\varsigma_{\delta}(t_{S_{l-1}})},\mathfrak{A}^{+}_{S_{l}} \big]=\mathbb{E}\big[ \sum_{j=S_{l-1}}^{k}L_{j}\mathbb{I}_{\{Z_{j} \text{ is non-free}\}} |\mathcal{F}_{\varsigma_{\delta}(t_{S_{l-1}})} \big]=0.\]
Also, by Lemma \ref{lem3},
\[\mathbb{E}\big[ \sum_{j=S_{l-1}}^{k}L_{j}\mathbb{I}_{\{Z_{j} \text{ is free}\}} |\mathcal{F}_{\varsigma_{\delta}(t_{S_{l-1}})} ,\mathfrak{A}^{+}_{S_{l}}\big]=0.\]
Hence, again, we have
\[\mathbb{E}\big[ W_{k,S_{l-1}-1} |\mathcal{F}_{\varsigma_{\delta}(t_{S_{l-1}})}, \mathfrak{A}^{+}_{S_{l}}, \big]=0.\]
This implies that
\[\mathbb{E}\big[\sum_{k=S_{l-1}}^{S_{l}-1}L_{k}V^{(1)}_{t_{k}}(t_{S_{l}}) |\mathcal{F}_{\varsigma_{\delta}(t_{S_{l-1}})}, \mathfrak{A}^{+}_{S_{l}}  \big]=0
.\]
\end{proof}
\section{Driving term convergence of the $+$CBP and $+\partial$CBP}
We first define an \emph{$\epsilon$-semimartingale} to be the sum of a local martingale and a finite $(1+\epsilon)$-variation process for every $0<\epsilon<1$. Note that a continuous martingale with finite $(1+\epsilon)$-variation (for $0<\epsilon<1$) is necessarily constant (using the same proof for finite variation see e.g. \cite{RY}). This implies that the decomposition of $\epsilon$-martingales is unique.

 In this section, we will show that the driving term of the +CBP, $\xi^{\delta}_{t}|\mathfrak{A}_{\infty}^{+}$,  converges subsequentially to an $\epsilon$-semimartingale. From this we deduce the same for the $+\partial$CBP. We can write
\begin{equation}\xi^{\delta} _{t_{m_{n}}}=  \left(\mathcal{H}^{\delta} _{t_{m_{n}}}+\mathcal{R}^{\delta}_{t_{m_{n}}} \right),\label{ppeq2}
\end{equation}
where using (\ref{zeq11}), (\ref{beq1}) and (\ref{beq2}), we define
\[\mathcal{H}^{\delta}_{t_{m_n}} \triangleq\left[\sum_{j=1}^{n}(H_{j}-K_{j})\right]\]
and
\[\mathcal{R}^{\delta}_{t_{m_{n}}} \triangleq\sum_{k=1}^{n}\big(K_{k}+ R_{t_{m_{k-1}}}(t_{m_{k}})-R_{t_{m_{k-1}}}(t_{m_{k-1}})\big),\]
for $n=1,2,\ldots$ and for $t\in(t_{m_{n-1}},t_{m_{n}})$, $\mathcal{H}^{\delta}_{t}$ and $\mathcal{R}^{\delta}_{t}$ are obtained by linear interpolation.

\begin{proposition}\label{prop13}
Suppose that $T>0$, and $N$ satisfies $t_{m_{N-1}}<T\leq t_{m_{N}}$. Then for any sequence $(\delta_{k})$ with $\delta_{k}\searrow 0$ as $k\rightarrow \infty$,
 $\mathcal{R}^{\delta_{k}}_{t}|\mathfrak{A}^{+}_{\infty}$ has a subsequence which converges uniformly in distribution to a finite $(1+\epsilon)$-variation process (for every $0<\epsilon<1$) on $[0,T]$.
\end{proposition}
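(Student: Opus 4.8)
The plan is to handle the two halves of $\mathcal{R}^{\delta}_{t_{m_n}}$ --- the telescoped $R$-increments $\sum_{k=1}^{n}\big(R_{t_{m_{k-1}}}(t_{m_k})-R_{t_{m_{k-1}}}(t_{m_{k-1}})\big)$ and the sum $\sum_{k=1}^{n}K_{k}$ --- so as to obtain for $\mathcal{R}^{\delta}$ an increment bound of the shape
\[
\big|\mathcal{R}^{\delta}_{t}-\mathcal{R}^{\delta}_{s}\big|\le \mathcal{W}^{\delta}\,\big|A(t)-A(s)\big|+\big|B(t)-B(s)\big|,\qquad 0\le s\le t\le T,
\]
where $\mathcal{W}^{\delta}=\max_{2\le j\le m_{N}}\big|\sum_{k=2}^{j}L_{k}\big|$ (the maximal turning partial sum over the whole of $[0,T]$), and $A,B$ are continuous increasing processes whose total variation on $[0,T]$ is controlled in every $L^{q}$, uniformly in $\delta$ and under the conditioning $\mathfrak{A}^{+}_{\infty}$; off the grid $\{t_{m_n}\}$ everything is read off via the linear interpolation already used to define $\mathcal{R}^{\delta}$. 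Using the explicit formula (\ref{zeq11}), the part of $R_{s}(t)$ not depending on $s$, namely $\tfrac12(a_{1}(t)+b_{1}(t))+\tfrac12\sum_{j}\rho_{j}(r_{j}-r_{j}(t))$, telescopes over the blocks to the value of $\tfrac12(a_{1}(\cdot)+b_{1}(\cdot))+\tfrac12\sum_{j}\rho_{j}(r_{j}-r_{j}(\cdot))$ at time $t_{m_n}$, up to an additive constant; since $a_{1},b_{1},r_{j}$ are monotone solutions of the Loewner-type equations (\ref{zeq2}), this is a genuine finite-variation contribution and becomes (most of) the process $B$.

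For the remaining ``winding'' part $\tfrac12\sum_{k=2}^{N(s)}L_{k}(a_{k}(t)-b_{k}(t))$ of the $R$-increments, and for $\sum_{k}K_{k}$, I would repeat the Abel-summation device of Lemmas \ref{mel3} and \ref{mel6}: writing $a_{k}(t)-b_{k}(t)=\sum_{j\ge k}\Delta_{j}(t)$ at the grid times (valid because $a_{m_n}(t_{m_n})=b_{m_n}(t_{m_n})$), and telescoping $V^{(1)}_{t_{k}}(t)$ in the index of its first argument via $V^{(1)}_{t_{S}}(t_{S})=0$ exactly as in Lemma \ref{mel6}, then exchanging the order of summation, both quantities become sums of the form $\sum_{j}(\text{increment of a monotone process})\times W_{j,\cdot}$, where each $W_{j,\cdot}$ is a partial sum of the turning sequence $(L_{k})$ and the processes $\Delta_{j}$, $\Psi_{j}$ are monotone in $t$ by the Loewner equation. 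Bounding $|W_{j,\cdot}|\le\mathcal{W}^{\delta}$ and noting that, summed over the index $j$ and over the blocks, the telescoping increments of $\Delta_{j}$ and $\Psi_{j}$ add up to quantities of uniformly bounded total variation --- here using Corollary \ref{cor1} together with the continuity of $V_{s}(t)$, which as a difference of coupled Bessel processes has a $\delta$-independent local modulus --- yields the displayed increment bound, with $A$ packaging these telescoped $\Delta$- and $\Psi$-increments.

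It then remains to control $\mathcal{W}^{\delta}$ and to invoke the continuity theorem. By the $\mathrm{hsiz}$ argument preceding Lemma \ref{turn1}, the number of blocks up to time $T$ is bounded (each block carries at least an $\eta$-proportional amount of half-plane capacity, by (\ref{hsiz}) and (\ref{xsx1})), and within a single block the winding is bounded by a constant depending only on $\mathbf{D}$; combined with the exponential tail (\ref{brb1}) of Lemma \ref{turn1}, which holds under the conditioning $\mathfrak{A}^{+}$, this shows that $\mathcal{W}^{\delta}$ has all moments bounded uniformly in $\delta$ and under $\mathfrak{A}^{+}_{\infty}$. Feeding this into the increment bound and applying Cauchy--Schwarz,
\[
\mathbb{E}\big[|\mathcal{R}^{\delta}_{t}-\mathcal{R}^{\delta}_{s}|^{p}\,\big|\,\mathfrak{A}^{+}_{\infty}\big]\le C_{p}\,\mathbb{E}\big[|A(t)-A(s)|^{2p}\,\big|\,\mathfrak{A}^{+}_{\infty}\big]^{1/2}+C_{p}\,\mathbb{E}\big[|B(t)-B(s)|^{p}\,\big|\,\mathfrak{A}^{+}_{\infty}\big],
\]
and the H\"older-type moment bounds for the increments of $A$ and $B$, transferred from the $C\delta^{\alpha/2}$ coupling with the SLE$_{6}$ trace in Corollaries \ref{couple}--\ref{cor1}, put this estimate in exactly the form required by the strengthened Kolmogorov-Centsov theorem proved in the Appendix. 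That theorem then gives, for any $\delta_{k}\searrow 0$, tightness of $\mathcal{R}^{\delta_{k}}_{t}|\mathfrak{A}^{+}_{\infty}$ in $C[0,T]$, hence a subsequence converging in distribution in the uniform topology, and the increment estimate survives the passage to the limit, forcing the limit process to have finite $(1+\epsilon)$-variation for every $0<\epsilon<1$.

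The step I expect to be the main obstacle is the regularity input: one must prove $\mathbb{E}\big[|A(t)-A(s)|^{q}\mid\mathfrak{A}^{+}_{\infty}\big]\le C_{q}|t-s|^{1+\theta_{q}}$ (and similarly for $B$) with $\theta_{q}$ growing in $q$, uniformly in $\delta$. The crude bound $\dot{a}_{1}=2/(a_{1}-\xi)$ is useless here since $a_{1}-\xi$ and $\xi-b_{1}$ can approach $0$; the estimate has to be extracted from the SLE$_{6}$ coupling, using the known moduli of continuity of the SLE$_{6}$ flow near the boundary and the exponential moments of $\mathcal{W}^{\delta}$ to absorb the winding factor. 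The other point requiring care is the bookkeeping in the two Abel summations --- verifying that every term that survives after telescoping is genuinely of bounded variation --- but this is routine in light of the computations already performed for Lemmas \ref{mel3}, \ref{mel5} and \ref{mel6}.
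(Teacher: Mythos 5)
Your proposal follows essentially the same route as the paper: the same Abel-summation/telescoping decomposition of the increments of $\mathcal{R}^{\delta}$ into the form $(\mathcal{W}^{\delta}|\mathfrak{A}^{+}_{\infty})(X^{\delta}_{t}-X^{\delta}_{s})+(Y^{\delta}_{t}-Y^{\delta}_{s})$ (the paper's (\ref{ppeq1})), the same use of Lemma \ref{turn1} to control all moments of $\mathcal{W}^{\delta}$, and the same appeal to the strengthened Kolmogorov--Centsov theorem of the Appendix to extract finite $(1+\epsilon)$-variation of a subsequential limit. The one point where you diverge is precisely the step you flag as the main obstacle, the H\"{o}lder regularity of the finite-variation factors: the paper does \emph{not} extract this from the SLE$_{6}$ coupling, but obtains the almost-sure deterministic bound $|X^{\delta}_{t}-X^{\delta}_{s}|\le C_{2}\sqrt{t-s}$ (and likewise for $Y^{\delta}$) directly from Proposition 3.76 of \cite{lawlerbook}, which controls $\mathrm{diam}(f_{s}^{-1}(\gamma[s,t]))$ and hence the displacement of the boundary points $a_{k},b_{k},r_{j}$ under $g_{t}\circ g_{s}^{-1}$; this sidesteps entirely your (legitimate) worry about $a_{1}-\xi$ degenerating in the Loewner ODE, and no probabilistic input beyond Lemma \ref{turn1} and Corollary \ref{cor1} is needed. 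With that deterministic bound, the Appendix theorem's hypothesis is verified for the ratio $(M_{t}-M_{s})/(X_{t}-X_{s})$ using only moments of $\mathcal{W}^{\delta}$, rather than via your Cauchy--Schwarz splitting; both formulations work, but the ratio form is what Theorem \ref{KC} is literally stated for. The remaining pieces --- Arzel\`{a}--Ascoli for $X^{\delta},Y^{\delta}$, Helly selection and uniform integrability for tightness of $\mathcal{R}^{\delta}_{\cdot}|\mathfrak{A}^{+}_{\infty}$, and Fatou after a Skorokhod--Dudley coupling to pass the moment bound to the limit $\mathcal{R}^{*}$ --- are as you sketch.
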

\begin{proof}
We consider
\[\mathcal{R}^{\delta}_{t_{m_{k}}}-\mathcal{R}^{\delta}_{t_{m_{k-1}}}=K_{k}+ R_{t_{m_{k-1}}}(t_{m_{k}})-R_{t_{m_{k-1}}}(t_{m_{k-1}})\]
Note that from (\ref{zeq11})
\begin{eqnarray*}&&R_{t_{m_{k-1}}}(t_{m_{k}})-R_{t_{m_{k-1}}}(t_{m_{k-1}})=\frac{1}{2}(a_{1}(t_{m_{k}})-a_{1}(t_{m_{k-1}}) )\\&& \qquad
+\frac{1}{2}(b_{1}(t_{m_{k}})-b_{1}(t_{m_{k-1}}) )
+\frac{1}{2}\big(\sum_{j=1}^{M}\rho_{j}(r_{j}(t_{m_{k-1}})-r_{j}(t_{m_{k}}))\big)
\\&&\qquad \qquad +\frac{1}{2}\big(\sum_{j=2}^{m_{k-1}}L_{j}( (a_{j}(t_{m_{k}})-a_{j}(t_{m_{k-1}}) ) - (b_{j}(t_{m_{k}})-b_{j}(t_{m_{k-1}}) ))\big).
\end{eqnarray*}
Let
\begin{eqnarray*}\Phi_{j,k}&=& (a_{j}(t_{m_{k}})-a_{j}(t_{m_{k-1}}) ) - (b_{j}(t_{m_{k}})-b_{j}(t_{m_{k-1}}) )\\&&- (a_{j+1}(t_{m_{k}})-a_{j+1}(t_{m_{k-1}}) ) - (b_{j+1}(t_{m_{k}})-b_{j+1}(t_{m_{k-1}}) ).\end{eqnarray*}
Then from the Loewner differential equation, $\Phi_{j,k}>0$. Then,
\begin{eqnarray*} &&(a_{j}(t_{m_{k}})-a_{j}(t_{m_{k-1}}))  - (b_{j}(t_{m_{k}})-b_{j}(t_{m_{k-1}}) )\\
 &=& (a_{m_{k-1}+1}(t_{m_{k}})-b_{m_{k-1}+1}(t_{m_{k}}))+ \sum_{l=j}^{m_{k-1}}\Phi_{l,k}.\end{eqnarray*}
Hence
\begin{eqnarray*}
&&\Big|\sum_{j=2}^{N(s)}L_{j}\big[ (a_{j}(t_{m_{k}})-a_{j}(t_{m_{k-1}}) ) - (b_{j}(t_{m_{k}})-b_{j}(t_{m_{k-1}}) )\big]\Big|
\\&\leq& \Big|\sum_{j=2}^{m_{k-1}} L_{j}\sum_{l=j}^{m_{k-1}}\Phi_{l,k} \Big| +\Big|(a_{m_{k-1}+1}(t_{m_{k}})-b_{m_{k-1}+1}(t_{m_{k}}))\sum_{j=2}^{m_{k-1}} L_{j} \Big|
\\&=& \Big|\sum_{l=2}^{m_{k-1}} \Phi_{l,k}\sum_{j=2}^{l}L_{j}\Big|+\Big|(a_{m_{k-1}+1}(t_{m_{k}})-b_{m_{k-1}+1}(t_{m_{k}}))\sum_{j=2}^{m_{k-1}} L_{j} \Big|
\\&\leq& \mathcal{W}^{\delta}\big((a_{2}(t_{m_{k}})-b_{2}(t_{m_{k}}) )-(a_{2}(t_{m_{k-1}})-b_{2}(t_{m_{k-1}}) ) \\
&&\qquad \qquad +2(a_{m_{k-1}+1}(t_{m_{k}})-b_{m_{k-1}+1}(t_{m_{k}})) \big),
\end{eqnarray*}
where
\[\mathcal{W}^{\delta}=\max_{l=1,\ldots, m_{N}} \Big|\sum_{j=2}^{l}L_{j}\Big|.\]
Similarly, we can show that
\[
|K_{k}|\leq \mathcal{W}^{\delta}V^{(1)}_{t_{m_{k-1}}}(t_{m_{k}})
\]
Hence,
\begin{eqnarray*}&&|\mathcal{R}^{\delta}_{t_{m_{k}}}-\mathcal{R}^{\delta}_{t_{m_{k-1}}}|\\
&\leq& \mathcal{W}^{\delta}\Big[V^{(1)}_{t_{m_{k-1}}}(t_{m_{k}})+(a_{2}(t_{m_{k}})-b_{2}(t_{m_{k}}) )-(a_{2}(t_{m_{k-1}})-b_{2}(t_{m_{k-1}})  \\
&&+2(a_{m_{k-1}+1}(t_{m_{k}})-b_{m_{k-1}+1}(t_{m_{k}}))\Big]
+\frac{1}{2}(a_{1}(t_{m_{k}})-a_{1}(t_{m_{k-1}}) )\\&&
+\frac{1}{2}(b_{1}(t_{m_{k}})-b_{1}(t_{m_{k-1}}) )
+\frac{1}{2}\big(\sum_{j=1}^{M}\rho_{j}(r_{j}(t_{m_{k-1}})-r_{j}(t_{m_{k}}))\big).
\end{eqnarray*}
Note that
\[\sum_{k=1}^{N} V^{(1)}_{t_{m_{k-1}}}(t_{m_{k}})\]
and
\[\sum_{k=1}^{N} (a_{m_{k-1}+1}(t_{m_{k}})-b_{m_{k-1}+1}(t_{m_{k}}))\]
are uniformly bounded above, for all $\delta$, by a constant multiple of $T$.  Hence, we can write
\begin{equation}\big(\mathcal{R}^{\delta}_{t}|\mathfrak{A}^{+}_{\infty}\big)-\big(\mathcal{R}^{\delta}_{s}|\mathfrak{A}^{+}_{\infty}\big)=(\mathcal{W}^{\delta}|\mathfrak{A}^{+}_{\infty})(X^{\delta}_{t}-X^{\delta}_{s})+(Y^{\delta}_{t}-Y^{\delta}_{s})\label{ppeq1}\end{equation}
for continuous finite variation processes $X^{\delta}_{t}$ and $Y^{\delta}_{t}$ which are the sum of finitely many (uniformly for all $\delta$) monotonic increasing/decreasing functions. Note that by Proposition 3.76 in \cite{lawlerbook},
\[\mathrm{diam} (f_{s}^{-1}(\gamma[s,t]))\leq C_{1}\mathrm{diam}(\gamma[s,t])^{\frac{1}{2}}\Big(\sup_{z\in\gamma[s,t]} \mathrm{Im}[z]\Big)^{\frac{1}{2}}\]
for some universal constant $C_{1}>0$.
Hence,
$(a_{2}(t)-b_{2}(t) )-(a_{2}(s)-b_{2}(s)$, $(a_{1}(t)-a_{1}(s))$, $(b_{1}(t)-b_{1}(s) )$ and $(r_{j}(t)-r_{j}(s))$ are uniformly bounded above by a constant multiple of $\sqrt{t-s}$. Hence we also have
\begin{eqnarray}&&|X^{\delta}_{t}-X^{\delta}_{s}|\leq C_{2}\sqrt{t-s}\nonumber\\
&&|Y^{\delta}_{t}-Y^{\delta}_{s}|\leq C_{2}\sqrt{t-s}\label{kjeq1}\end{eqnarray}
for some constant $C_{2}>0$ independent of $\delta$.
 Then by the Arzel\'{a}-Ascoli theorem, for any sequence $(\delta_{k})$ with $\delta_{k}\searrow 0$, we can find subsequence  $(\delta_{n_{k}})$ such that $X^{\delta_{n_{k}}}\rightarrow X$ and $Y^{\delta_{n_{k}}}\rightarrow Y$ as $k\rightarrow \infty$ where $X$ and $Y$ are also continuous and of finite variation (since they are also the sum of finitely many monotonic increasing/decreasing functions).

By the Helly selection principle, the law of $\mathcal{R}^{\delta}_{\cdot}|\mathfrak{A}^{+}_{\infty}$ has weak convergent subsequence for the sequence $(\delta_{n_{k}})$. Tightness of $\mathcal{R}^{\delta}_{\cdot}|\mathfrak{A}^{+}_{\infty}$ is guaranteed by its uniform integrability which is a consequence of  Lemma \ref{turn1} and  the fact that any moment of $|X^{\delta_{n_{k}}}_{t}-X^{\delta_{n_{k}}}_{s}|$ and $|Y^{\delta_{n_{k}}}_{t}-Y^{\delta_{n_{k}}}_{s}|$ is bounded. Using the Kolmogorov extension theorem, we call the corresponding weak limit $\mathcal{R}^{*}_{\cdot}$.

Then for any $n$,
\[\mathbb{E}\Big[\left|\frac{\left(\mathcal{R}^{\delta_{n_{k}}}_{t}-\mathcal{R}^{\delta_{n_{k}}}_{s}\right)|\mathfrak{A}^{+}_{\infty}-(Y^{\delta_{n_{k}}}_{t}-Y^{\delta_{n_{k}}}_{s})}{X^{\delta_{n_{k}}}_{t}-X^{\delta_{n_{k}}}_{s}}\right|^{n}\Big]\leq\mathbb{E}[ (\mathcal{W}^{\delta_{n_{k}} }|\mathfrak{A}^{+}_{\infty})^{n} ].\]
Since
\[\mathbb{E}[(\mathcal{W^{\delta}}|\mathfrak{A}^{+}_{\infty})^{n}]\]
does not depend on $\delta$ by Lemma \ref{turn1}, using the Skorokhod-Dudley Theorem and Fatou's Lemma, we have
\[\mathbb{E}\Big[\left|\frac{\mathcal{R}^{*}_{t}-\mathcal{R}^{*}_{s}-(Y_{t}-Y_{s})}{X_{t}-X_{s}}\right|^{n}\Big] <\infty.\]
From the above, we also have
\[|X_{t}-X_{s}|<C_{2}\sqrt{t-s}\]
for any $0\leq s<t\leq T$.
Then using the version of the Kolmogorov-Centsov Continuity Theorem in the Appendix (Theorem \ref{KC}), this implies that for any $0<\epsilon<1$,
\begin{equation}|(\mathcal{R}^{*}_{t}-\mathcal{R}_{s}^{*})-(Y_{t}-Y_{s})|\leq B_{\epsilon}(\omega_{t}-\omega_{s})^{1-\epsilon}
\label{loeq1}\end{equation}
for some almost surely finite random variable $B_{\epsilon}$ with $\mathbb{E}[B_{\epsilon}^{q}]<\infty$ for every $q>1$ and finite variation process $\omega_{t}$. Hence
  $R^{*}(t)-Y_{t}$ is of finite $(1+\epsilon)$-variation almost surely. Since $Y_{t}$ is of finite variation, this implies that $\mathcal{R}^{*}_{t}$ is of finite $(1+\epsilon)$-variation almost surely.
\end{proof}

Combining Lemmas \ref{mel3}, \ref{mel5} and \ref{mel6} with Proposition \ref{prop13} allows us to obtain convergence of $\xi^{\delta}_{t}|\mathfrak{A}_{\infty}^{+}$ to an $\epsilon$-semimartingale.
\begin{theorem}\label{mel2}
Suppose that $T>0$, and $N$ satisfies $t_{m_{N-1}}<T\leq t_{m_{N}}$.
 Then for any sequence $(\delta_{k})$ with $\delta_{k}\searrow 0$ as $k\rightarrow \infty$,
 $\xi^{\delta_{k}}_{t}|\mathfrak{A}^{+}_{\infty}$ has a subsequence which converges uniformly in distribution to an $\epsilon$-semimartingale on $[0,T]$.
\end{theorem}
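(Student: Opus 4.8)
The plan is to exploit the splitting $\xi^{\delta}_{t_{m_{n}}}=\mathcal{H}^{\delta}_{t_{m_{n}}}+\mathcal{R}^{\delta}_{t_{m_{n}}}$ in (\ref{ppeq2}) together with Proposition \ref{prop13}. Proposition \ref{prop13} already gives, along any sequence $\delta_{k}\searrow0$, a subsequence along which $\mathcal{R}^{\delta_{k}}_{t}|\mathfrak{A}^{+}_{\infty}$ converges uniformly in distribution to a finite $(1+\epsilon)$-variation process; so it remains to show that, after passing to a further subsequence, $\mathcal{H}^{\delta_{k}}_{t}|\mathfrak{A}^{+}_{\infty}$ converges uniformly in distribution to a continuous (local) martingale. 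The sum of the two limits is then an $\epsilon$-semimartingale by definition, and uniform convergence of both pieces gives uniform convergence of $\xi^{\delta_{k}}_{t}|\mathfrak{A}^{+}_{\infty}$.

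First I would show that $\mathcal{H}^{\delta}$ is an \emph{approximate martingale}. Its increments are $H_{n}-K_{n}$; expanding $H_{n}$ via (\ref{beq1}), splitting $V_{t_{k}}=V^{(1)}_{t_{k}}+V^{(2)}_{t_{k}}$, and using the telescoping identity $\sum_{k=m_{n-1}+1}^{m_{n}}L_{k}V^{(1)}_{t_{k}}(t_{m_{n}})=\sum_{l=1}^{N^{*}}J_{l}+K_{n}$ of Section 7, one writes $H_{n}-K_{n}$ as $\tfrac12\sum L_{k}U_{k}(t_{m_{n}})+\tfrac12\sum L_{k}V^{(2)}_{t_{k}}(t_{m_{n}})+\tfrac12\sum_{l}J_{l}$ plus a remainder involving $K_{n}$ that is of finite variation (dominated by $\mathcal{W}^{\delta}V^{(1)}_{t_{m_{n-1}}}(t_{m_{n}})$, exactly as in the proof of Proposition \ref{prop13}) and may therefore be moved into $\mathcal{R}^{\delta}$. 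Lemmas \ref{mel3}, \ref{mel5} and \ref{mel6} then yield
\[\Bigl|\mathbb{E}\bigl[H_{n}-K_{n}\,\bigm|\,\mathcal{F}_{\varsigma_{\delta}(t_{m_{n-1}})},\mathfrak{A}^{+}_{m_{n}}\bigr]\Bigr|\le C\,\delta^{\frac{\alpha}{2}-\epsilon}\]
for every sufficiently small $\epsilon>0$, with $\alpha$ the exponent from Corollary \ref{couple}. Setting $A^{\delta}_{n}\triangleq\sum_{j=1}^{n}\mathbb{E}[H_{j}-K_{j}\mid\mathcal{F}_{\varsigma_{\delta}(t_{m_{j-1}})},\mathfrak{A}^{+}_{m_{j}}]$, the process $M^{\delta}_{n}\triangleq\mathcal{H}^{\delta}_{t_{m_{n}}}-A^{\delta}_{n}$ is a genuine martingale for the natural filtration of the $+$CBP. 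Since (\ref{xsx1}) (and the lower bound implicit in (\ref{hsiz})) force $t_{m_{j}}-t_{m_{j-1}}\asymp\eta$, there are $O(T/\eta)$ indices with $t_{m_{j}}\le T$, so $\sup_{t\le T}|A^{\delta}_{t}|\le C(T/\eta)\delta^{\frac{\alpha}{2}-\epsilon}$; choosing the so-far-unspecified parameter as $\eta=\delta^{\theta}$ with $0<\theta<\alpha/2$ and $\epsilon$ small makes this vanish as $\delta\searrow0$, so $\mathcal{H}^{\delta}$ and $M^{\delta}$ have the same subsequential limits.

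Next I would establish tightness of $\mathcal{H}^{\delta}_{t}|\mathfrak{A}^{+}_{\infty}$ in $C[0,T]$, reusing the estimates from Proposition \ref{prop13}: each increment $H_{n}-K_{n}$ is dominated by $\mathcal{W}^{\delta}$ times a sum of increments of the preimage processes $a_{k},b_{k},r_{j}$ over $[t_{m_{n-1}},t_{m_{n}}]$, each $O(\sqrt{t_{m_{n}}-t_{m_{n-1}}})$ by Proposition 3.76 of \cite{lawlerbook}, while Lemma \ref{turn1} gives a uniform-in-$\delta$ exponential tail — hence all moments — for $\mathcal{W}^{\delta}|\mathfrak{A}^{+}_{\infty}$. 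Combining these gives, for every integer $p\ge1$, a bound $\mathbb{E}[|\mathcal{H}^{\delta}_{t}-\mathcal{H}^{\delta}_{s}|^{2p}\mid\mathfrak{A}^{+}_{\infty}]\le C_{p}|t-s|^{p}$ uniform in $\delta$, together with uniform integrability of the increments. The version of the Kolmogorov--Centsov theorem in the Appendix (Theorem \ref{KC}) then yields tightness, and by Prokhorov's theorem we may refine the subsequence of Proposition \ref{prop13} so that $(\mathcal{H}^{\delta_{k}}_{\cdot}|\mathfrak{A}^{+}_{\infty},\,\mathcal{R}^{\delta_{k}}_{\cdot}|\mathfrak{A}^{+}_{\infty})$ converges jointly and uniformly in distribution to some $(\mathcal{H}^{*},\mathcal{R}^{*})$ with continuous paths. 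Realising the convergence almost surely by Skorokhod--Dudley, the uniform integrability of the martingale increments of $M^{\delta}$ and $\sup_{t\le T}|A^{\delta}_{t}|\to0$ allow the martingale identity to pass to the limit, so $\mathcal{H}^{*}$ is a continuous (local) martingale on $[0,T]$; since $\mathcal{R}^{*}$ has finite $(1+\epsilon)$-variation, $\xi^{\delta_{k}}_{t}|\mathfrak{A}^{+}_{\infty}\to\mathcal{H}^{*}_{t}+\mathcal{R}^{*}_{t}$ uniformly in distribution, and this limit is an $\epsilon$-semimartingale.

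The hard part will be this last passage to the limit: showing that the discrete ``almost martingales'' $M^{\delta}$ converge to a genuine continuous local martingale, not merely to a continuous process. This is where one pays for the fact that the winding factor $\mathcal{W}^{\delta}$ is controlled only through its (uniformly bounded) moments rather than pathwise, so the uniform integrability supplied by Lemma \ref{turn1} is indispensable, and where the compensator estimate $\delta^{\alpha/2-\epsilon}$ must be correctly balanced against the $O(T/\eta)$ step count through the choice of $\eta$. By comparison, the tightness estimate, although technical, is essentially a rerun of the computations already performed for $\mathcal{R}^{\delta}$ in Proposition \ref{prop13}.
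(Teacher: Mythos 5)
Your decomposition, the use of Lemmas \ref{mel3}, \ref{mel5} and \ref{mel6} for the compensator bound, and the $L^{p}$ control via the moments of $\mathcal{W}^{\delta}$ from Lemma \ref{turn1} are exactly the ingredients the paper uses, and the scaling of $\eta$ against the $O(T/\eta)$ step count matches the paper's choice $\eta=\min\{\delta^{\alpha/2-\epsilon'},\delta^{1/2-\epsilon'}\}$. Where you diverge is in the final passage to the limit: the paper Skorokhod-embeds the discrete martingale $\mathcal{M}^{\delta}_{t_{m_{n}}}$ into a fixed Brownian motion $\mathbb{B}$, defines $\mathcal{N}_{t}\triangleq\liminf_{\delta}\mathbb{B}_{\tau^{\delta}_{n^{\delta}(t)}}$ over rationals, diagonalizes, verifies the martingale property directly through finite-dimensional expectations, produces a continuous modification, and then proves uniform convergence through a careful partition argument with Doob's inequality and Markov's inequality. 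You instead establish tightness in $C[0,T]$ from the moment bounds, extract a weak subsequential limit via Prokhorov, realize it almost surely by Skorokhod--Dudley, and pass the martingale identity using uniform integrability. Your route is cleaner: weak convergence in $C[0,T]$ with the sup norm \emph{is} uniform convergence in distribution, so the paper's final partition estimate becomes unnecessary. Both approaches lean on exactly the same quantitative estimates, so the difference is purely in the limit-extraction technique; yours is the more standard textbook argument, the paper's is more constructive.

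Two small remarks. First, you correctly spot that, with $\mathcal{H}^{\delta}_{t_{m_{n}}}=\sum_{j}(H_{j}-K_{j})$ as the paper writes it, the telescoping leaves a residual $-\tfrac12 K_{n}$ that is not covered by Lemmas \ref{mel3}--\ref{mel6}; re-absorbing half of $K_{n}$ into $\mathcal{R}^{\delta}$ (equivalently, reading the split as $\mathcal{H}^{\delta}=\sum(H_{j}-\tfrac12 K_{j})$, $\mathcal{R}^{\delta}=\sum(\tfrac12 K_{k}+\cdots)$) is the right fix, and the proof of Proposition \ref{prop13} is insensitive to the constant in front of $K_{k}$. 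Second, the Appendix Theorem \ref{KC} is an a.s.\ $(1+\epsilon)$-variation statement for a single process, not a tightness criterion; what you actually need at that stage is the standard Kolmogorov moment tightness criterion for $C[0,T]$, which follows from the same bound $\mathbb{E}[|\mathcal{H}^{\delta}_{t}-\mathcal{H}^{\delta}_{s}|^{2p}|\mathfrak{A}^{+}_{\infty}]\le C_{p}|t-s|^{p}$ with $p\ge2$. This is a misattribution of the reference rather than a gap in the argument.
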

\begin{proof}
Using Proposition \ref{prop13}, we only need to show that $\mathcal{H}_{t}^{\delta}|\mathfrak{A}_{\infty}^{+}$ converges subsequentially to a martingale. We first take $0<\epsilon<\epsilon'<\min\{\frac{\alpha}{2},\frac{1}{2}\}$. We then fix  sufficiently small $\delta>0$ and set $\eta\triangleq\min\{\delta^{\frac{\alpha}{2}-\epsilon'},\delta^{\frac{1}{2}-\epsilon'}\}$. Since $N$ satisfies
\[t_{m_{N-1}}< T\leq t_{m_{N}},\]
by (\ref{xsx1}), we have
\begin{equation}N-1\leq \frac{2\pi}{7}\frac{T}{\eta}. \label{beq7}\end{equation}
Then by Lemmas \ref{mel3}, \ref{mel5} and \ref{mel6}, we have for all $n=1,\ldots N$,
\begin{equation}\eta^{-1}\left|\mathbb{E}\left[\mathcal{H}^{\delta}_{t_{m_{n}}} -\mathcal{H}^{\delta}_{t_{m_{n-1}}}  |\mathcal{F}_{\varsigma_{\delta}(t_{m_{n-1}})},\mathfrak{A}_{t_n}^{+}
 \right]\right|\leq C_{1}\delta^{\epsilon'-\epsilon}, \label{brb10}\end{equation}
for some $C_{1}$ not depending on $n$ or $\delta$.
Then define for each $n=1,2,\ldots$,
\begin{equation}\mathcal{M}^{\delta}_{t_{m_n}}\triangleq
\sum_{j=1}^{n}\Big[\big(\mathcal{H}^{\delta}_{t_{m_j}} -\mathcal{H}^{\delta}_{t_{m_{j-1}}} \big)|\mathfrak{A}_{\infty}^{+}
-\mathbb{E}\big[\mathcal{H}^{\delta}_{t_{m_j}} -\mathcal{H}^{\delta}_{t_{m_{j-1}}}  |\mathcal{F}_{\varsigma_{\delta}(t_{m_{j-1}})},\mathfrak{A}_{\infty}^{+}\big]\Big],\label{nbeq1}\end{equation}
and for $t\in(t_{m_{n-1}},t_{m_n})$, $\mathcal{M}^{\delta} _{t}$ is the linear interpolation between $\mathcal{M}^{\delta} _{t_{m_{n-1}}}$ and $\mathcal{M}^{\delta} _{t_{m_n}}$.
We have
\[\big|\big(\mathcal{H}^{\delta}_{t_{m_n}} |\mathfrak{A}_{\infty}^{+}\big)-\mathcal{M}^{\delta} _{t_{m_n}}\big|\leq N \sup_{j=1,\ldots,N}\big|\mathbb{E}\left[\mathcal{H}^{\delta}_{t_{m_j}} -\mathcal{H}^{\delta}_{t_{m_{j-1}}}
|\mathcal{F}_{\varsigma_{\delta}(t_{m_{j-1}})},\mathfrak{A}_{\infty}^{+}\right]\big|.\]
Hence by (\ref{beq7}) and (\ref{brb10}),
\begin{equation}
\sup_{j=1,\ldots,N}|\big(\mathcal{H}^{\delta}_{t_{m_j}} |\mathfrak{A}_{\infty}^{+}\big)-\mathcal{M}^{\delta}_{t_{m_j}}|\leq C_{1}\delta^{\epsilon'-\epsilon}\searrow 0 \text{ as }\delta\searrow 0, \label{qq2}
\end{equation}
and by construction, $\mathcal{M}^{\delta}_{t_{m_0}},\ldots,\mathcal{M}^{\delta}_{t_{m_N}}$ is a martingale with respect to the filtration $\{\mathcal{F}_{\varsigma_{\delta}(t_{m_{n}})}\}$.
 By the Skorokhod embedding theorem \cite{RY}, there exists a Brownian motion $\mathbb{B}_{t}$ and a sequence of
stopping times $\tau^{\delta}_{0}\leq\tau^{\delta}_{1}\leq\ldots\leq\tau^{\delta}_{N}$ such
that $\mathbb{B}_{\tau^{\delta}_{n}}=\mathcal{M}^{\delta}_{t_{m_n}}$.
We now define for each rational $t$,
\[ \mathcal{N}_{t}\triangleq \liminf _{\delta\searrow 0}\mathbb{B}_{\tau^{\delta}_{n^{\delta}(t)}}\]
where $n^{\delta}(t)$ satisfies $t\in [t_{m_{n^{\delta}(t)}},  t_{m_{n^{\delta}(t)+1}})$. This is well defined since \[t_{m_{n^{\delta}(t)+1}}-t_{m_{n^{\delta}(t)}}\searrow 0 \text{ as }\delta\searrow 0.\] Using a diagonalization argument, we can find subsequence $(\delta_{k})$ with $\delta_{k}\searrow 0$ as $k\rightarrow\infty$ such that for all rational $t$,
\begin{equation}\lim_{k\rightarrow\infty} \mathbb{B}_{\tau^{\delta_{k}}_{n^{\delta_{k}}(t)}}=\mathcal{N}_{t}.\label{ppeq5}\end{equation}
Now, again from (\ref{weq6}) and (\ref{zeq7}), we can write for any rational $t$ and $s$ with $s<t$,
\[
\xi^{\delta}_{t}-\xi^{\delta}_{s}= R_{s}(t)-R_{s}(s) + \sum_{k=N(s)+1}^{N(t)} L_{k}(a_{k}(t_{m_{n}})-b_{k}(t_{m_{n}})).
\]
Similar to the proof of Proposition \ref{prop13}, we can telescope the sum to show that
\begin{equation}(\xi^{\delta}_{t}-\xi^{\delta}_{s})|\mathfrak{A}_{\infty}^{+}=\big(\mathcal{W}^{\delta}|\mathfrak{A}_{\infty}^{+}\big)(\widetilde{X}^{\delta}_{t}-\widetilde{X}^{\delta}_{s})+\widetilde{Y}^{\delta}_{t}-\widetilde{Y}^{\delta}_{s}\label{ppeq6}\end{equation}
where
\begin{eqnarray}&&|\widetilde{X}^{\delta}_{t}-\widetilde{X}^{\delta}_{s}|\leq C_{2}\sqrt{t-s} \nonumber\\&&
|\widetilde{Y}^{\delta}_{t}-\widetilde{Y}^{\delta}_{s}|\leq C_{2}\sqrt{t-s}\label{kjeq2}\end{eqnarray}
for some constant $C_{2}$ not depending on $\delta$.
Lemma \ref{turn1} implies that
\begin{eqnarray}\mathbb{E}[|\xi^{\delta}_{t}-\xi^{\delta}_{s}|^{p}|\mathfrak{A}_{\infty}^{+}]\nonumber&<&C_{2}\big(\mathbb{E}\big[\big(\mathcal{W}^{\delta}\big)^{p}|\mathfrak{A}_{\infty^{+}}\big]+1\big)(t-s)^{\frac{p}{2}}
\\ &<& C_{1}(p)(t-s)^{\frac{p}{2}}\label{nbeq2}
\end{eqnarray}
for some $C_{1}(p)$ depending on $p$ but not $\delta$. From (\ref{ppeq2}),
\[\mathcal{H}^{\delta_{k}}_{t_{m_{n}}}|\mathfrak{A}_{\infty}^{+} =(\xi^{\delta_{k}}_{t_{m_{n}}}|\mathfrak{A}_{\infty}^{+})- (\mathcal{R}^{\delta_{k}}_{t_{m_{n}}}|\mathfrak{A}_{\infty}^{+})\]
Hence from (\ref{ppeq1}), (\ref{nbeq1}) and (\ref{ppeq6}),
\begin{equation}\mathbb{E}[|\mathcal{M}_{t}^{\delta}-\mathcal{M}^{\delta}_{s}|^{p}]\leq C_{2}(p)(t-s)^{\frac{p}{2}}\label{nbeq4}\end{equation}
for constant $C_{2}(p)$ depending only on $p$ but not $\delta$.

Similarly, from (\ref{ppeq2}),
\begin{equation}\xi^{\delta_{k}}_{t_{m_{n}}}|\mathfrak{A}_{\infty}^{+}=\big( (\mathcal{H}^{\delta_{k}}_{t_{m_{n}}}|\mathfrak{A}_{\infty}^{+} )-\mathcal{M}^{\delta_{k}}_{t_{m_{n}}} \big) + (\mathcal{R}^{\delta_{k}}_{t_{m_{n}}}|\mathfrak{A}_{\infty}^{+})+\mathcal{M}^{\delta_{k}}_{t_{m_{n}}}.\label{ppeq4}\end{equation}
 The first term in (\ref{ppeq4}) converges to zero using (\ref{qq2}); passing to a further subsequence, the second term $\mathcal{R}^{\delta_{k}}_{t}|\mathfrak{A}_{\infty}^{+}\rightarrow \mathcal {R}^{*}_{t}$ by Proposition \ref{prop13}; and the third term $\mathcal{M}^{\delta_{k}}_{t_{m_{n}}}=\mathbb{B}_{\tau_{n}^{\delta}}$ converges to $\mathcal{N}_{t}$ by (\ref{ppeq5}). Then (\ref{nbeq2}), Fatou's lemma and the Kolmogorov-Centsov continuity theorem imply that  for any $\widetilde{\epsilon}>0$,
\begin{eqnarray}&&|(\mathcal{N}_{t}-\mathcal{N}_{s})+(\mathcal{R}^{*}_{t}-\mathcal{R}_{s}^{*})|\leq B_{\widetilde{\epsilon}}(t-s)^{\frac{1}{2}-\widetilde{\epsilon}}
\nonumber\\ \Rightarrow && |\mathcal{N}_{t}-\mathcal{N}_{s}|\leq B'_{\widetilde{\epsilon}}( (t-s)^{\frac{1}{2}-\widetilde{\epsilon}} +(\omega_{t}-\omega_{s})^{1-\widetilde{\epsilon}}+(Y_{t}-Y_{s}))
\label{ppeq7}\end{eqnarray}for some almost surely finite random variable $B_{\widetilde{\epsilon}}$ and $B'_{\widetilde{\epsilon}}$ and the same process $\omega_{t}$ of finite variation as  in the proof of Proposition \ref{prop13}.

Then by (\ref{ppeq1}) and (\ref{ppeq6}),
\[\xi^{\delta}_{T}|\mathfrak{A}_{\infty}^{+}=\mathcal{W}^{\delta}\widetilde{X}^{\delta}_{T}+\widetilde{Y}_{T}^{\delta}\]
and
\[\mathcal{R}^{\delta}_{T}|\mathfrak{A}_{\infty}^{+}=\mathcal{W}^{\delta}X^{\delta}_{T}+Y_{T}^{\delta}.\]
In particular, $\xi^{\delta}_{T}|\mathfrak{A}_{\infty}^{+}$ and $\mathcal{R}^{\delta}_{T}|\mathfrak{A}_{\infty}^{+}$ are uniformly $L^{p}$-bounded by Lemma \ref{turn1}, (\ref{kjeq1}) and (\ref{kjeq2}).
 Hence using (\ref{qq2}), Lemma \ref{turn1} and Doob's $L^{p}$ inequality,
\begin{eqnarray*}\sup_{\delta>0}\mathbb{E}\big[\big(\sup_{t\in[0,T]} \mathcal{M}_{t}^{\delta}\big)^{p}\big]&\leq&C_{3}(p)\sup_{\delta>0}\mathbb{E}\big[( \mathcal{M}_{T}^{\delta})^{p}\big]
\\&\leq&C_{4}(p)\big(\sup_{\delta>0}\mathbb{E}\big[( \xi^{\delta}_{T}|\mathfrak{A}_{\infty}^{+})^{p}\big]+\sup_{\delta>0}\mathbb{E}\big[(\mathcal{R}^{\delta}_{T}|\mathfrak{A}_{\infty}^{+})^{p} \big]\big)
<\infty\end{eqnarray*}
for some constants $C_{3}(p)$ and $C_{4}(p)$ depending only on $p$.
This in particular implies that $\{\mathcal{M}^{\delta}_{\cdot}\}_{\delta>0}$ are uniformly integrable and hence $\mathcal{N}_{\cdot}$ is also uniformly integrable on [0,T]. We define a new filtration \[\mathcal{G}_{t}\triangleq\sigma\{\mathcal{N}_{u},u\leq t\}.\]
 Clearly, $\mathcal{N}_{t}$ is adapted to $\{\mathcal{G}_{t}\}$. Hence for any bounded continuous function $f:\mathbb{R}^{j}\rightarrow \mathbb{R}$ and $u_{1}<\ldots u_{j}\leq s<t$,
\begin{eqnarray*}\mathbb{E}[\mathcal{N}_{t}f(\mathcal{N}_{u_{1}},\ldots, \mathcal{N}_{u_{j}})]&=&\mathbb{E}\big[\lim_{k\rightarrow\infty} \mathbb{B}_{\tau^{\delta_{k}}_{n^{\delta_{k}}(t)}}f(\mathbb{B}_{\tau^{\delta_{k}}_{n^{\delta_{k}}(u_{1})}},\ldots, \mathbb{B}_{\tau^{\delta_{k}}_{n^{\delta_{k}}(u_{j})}})\big]
\\&=&\lim_{k\rightarrow\infty} \mathbb{E}\big[ \mathbb{B}_{\tau^{\delta_{k}}_{n^{\delta_{k}}(t)}}f\big(\mathbb{B}_{\tau^{\delta_{k}}_{n^{\delta_{k}}(u_{1})}},\ldots, \mathbb{B}_{\tau^{\delta_{k}}_{n^{\delta_{k}}(u_{j})}}\big)\big]
\\&=&\lim_{k\rightarrow\infty}  \mathbb{E}\big[ \mathcal{M}^{\delta_{k}}_{t_{m_{n^{\delta_{k}}(t)}}}f\big(\mathcal{M}^{\delta_{k}}_{t_{m_{n^{\delta_{k}}(u_{1})}}},\ldots, \mathcal{M}^{\delta_{k}}_{t_{m_{n^{\delta_{k}}(u_{j})}}}\big)\big]
\\&=&\lim_{k\rightarrow\infty}  \mathbb{E}\big[ \mathcal{M}^{\delta_{k}}_{t_{m_{n^{\delta_{k}}(s)}}}f\big(\mathcal{M}^{\delta_{k}}_{t_{m_{n^{\delta_{k}}(u_{1})}}},\ldots, \mathcal{M}^{\delta_{k}}_{t_{m_{n^{\delta_{k}}(u_{j})}}}\big)\big]
\\&=&\lim_{k\rightarrow\infty} \mathbb{E}\big[ \mathbb{B}_{\tau^{\delta_{k}}_{n^{\delta_{k}}(s)}}f\big(\mathbb{B}_{\tau^{\delta_{k}}_{n^{\delta_{k}}(u_{1})}},\ldots, \mathbb{B}_{\tau^{\delta_{k}}_{n^{\delta_{k}}(u_{j})}}\big)\big]
\\&=&\mathbb{E}[\mathcal{N}_{s}f(\mathcal{N}_{u_{1}},\ldots, \mathcal{N}_{u_{j}})],
\end{eqnarray*}
where we have swapped the limit with the expectation by uniform integrability and used the martingale property of $\mathcal{M}^{\delta}_{t_{m_{n}}}$. This is the martingale property for $\mathcal{N}_{t}$. Hence by (\ref{ppeq7}) and this property, there is a continuous modification of $\mathcal{N}_{t}$ which is also a continuous time martingale with respect to the naturally induced extension of the filtration $\mathcal{G}_{t}$. We will use the same symbol to denote this modification.

Then for any $\widehat{\epsilon}>0$,
\begin{eqnarray*}&&\limsup_{k\rightarrow\infty}\mathbb{P}\Big[ \sup_{t\in[0,T]}\big|(\mathcal{H}_{t}^{\delta_{k}}|\mathfrak{A}_{\infty}^{+})-\mathcal{N}_{t}\big|>\widehat{\epsilon}\Big]\\&\leq&
\limsup_{k\rightarrow\infty}\mathbb{P}\Big[ \sup_{j=1,\ldots,N}|\big(\mathcal{H}^{\delta_{k}}_{t_{m_j}} |\mathfrak{A}_{\infty}^{+}\big)-\mathcal{M}^{\delta_{k}}_{t_{m_j}}|+
\sup_{t\in[0,T]}|\mathcal{M}^{\delta_{k}}_{t}-\mathcal{N}_{t}|>\widehat{\epsilon}\Big]
\\&=&
\limsup_{k\rightarrow\infty}\mathbb{P}\Big[ \sup_{t\in[0,T]}|\mathcal{M}^{\delta_{k}}_{t}-\mathcal{N}_{t}|>\widehat{\epsilon}\Big]
\end{eqnarray*}
where the second line follows from (\ref{qq2}). Taking a partition $0=s_{0}<\ldots<s_{M}=T$ such that $|s_{j}-s_{j-1}|<\theta$ for some $\theta>0$.
\begin{eqnarray*}
&&
\limsup_{k\rightarrow\infty}\mathbb{P}\Big[ \sup_{t\in[0,T]}|\mathcal{M}^{\delta_{k}}_{t}-\mathcal{N}_{t}|>\widehat{\epsilon}\Big]
\\&=& \limsup_{k\rightarrow\infty}\mathbb{P}\Big[ \max_{j=0,\ldots,M-1}\sup_{t\in[s_{j},s_{j+1}]}|\mathcal{M}^{\delta_{k}}_{t}-\mathcal{N}_{t}|>\widehat{\epsilon}\Big]
\\&\leq&\limsup_{k\rightarrow\infty}\mathbb{P}\Big[ \max_{j=0,\ldots,M-1}\sup_{t\in[s_{j},s_{j+1}]}\big(|\mathcal{M}^{\delta_{k}}_{t}-\mathcal{M}^{\delta_{k}}_{s_{j}}|+
|\mathcal{M}^{\delta_{k}}_{s_{j}}-
\mathcal{N}_{s_{j}}|+|\mathcal{N}_{s_{j}}-\mathcal{N}_{t}|\big)>\widehat{\epsilon}\Big]
\\&\leq&\limsup_{k\rightarrow\infty}\mathbb{P}\Big[ \max_{j=0,\ldots,M-1}\sup_{t\in[s_{j},s_{j+1}]}\big(|\mathcal{M}^{\delta_{k}}_{t}-\mathcal{M}^{\delta_{k}}_{s_{j}}|+|\mathcal{N}_{s_{j}}-\mathcal{N}_{t}|\big)>\widehat{\epsilon}\Big]
\end{eqnarray*}
where the last equality is by (\ref{ppeq5}).  Then,
\begin{eqnarray*}
&&\limsup_{k\rightarrow\infty}\mathbb{P}\Big[ \max_{j=0,\ldots,M-1}\sup_{t\in[s_{j},s_{j+1}]}|\mathcal{M}^{\delta_{k}}_{t}-\mathcal{M}^{\delta_{k}}_{s_{j}}|+|\mathcal{N}_{s_{j}}-\mathcal{N}_{t}|>\widehat{\epsilon}\Big]
\\&\leq& \limsup_{k\rightarrow\infty}\Big( \mathbb{P}\Big[ \max_{j=0,\ldots,M-1}\sup_{t\in[s_{j},s_{j+1}]}|\mathcal{N}_{s_{j}}-\mathcal{N}_{t}|>\frac{\widehat{\epsilon}}{2}\Big]
\\ &&\qquad \qquad+ \sum_{j=0}^{M-1}\mathbb{P}\Big[\sup_{\{n:t_{m_{n-1}}\text{ or } t_{m_{n}}\in[s_{j},s_{j+1}]\}}|\mathcal{M}^{\delta_{k}}_{t_{m_{n}}}-\mathcal{M}^{\delta_{k}}_{s_{j}}|>\frac{\widehat{\epsilon}}{2}\Big]\Big)
\\&\leq& \limsup_{k\rightarrow\infty}\Big( \mathbb{P}\Big[ \max_{j=0,\ldots,M-1}\sup_{t\in[s_{j},s_{j+1}]}|\mathcal{N}_{s_{j}}-\mathcal{N}_{t}|>\frac{\widehat{\epsilon}}{2}\Big]\\&&\qquad \qquad +\sum_{j=0}^{M-1}\Big(\frac{\widehat{\epsilon}}{2}\Big)^{-p}\mathbb{E}\big[\sup_{\{n:t_{m_{n-1}}\text{ or } t_{m_{n}}\in[s_{j},s_{j+1}]\}}|\mathcal{M}^{\delta_{k}}_{t_{m_{n}}}-\mathcal{M}^{\delta_{k}}_{s_{j}}|^{p}\big]\Big)
\end{eqnarray*}
where we have used the linear interpolation of $\mathcal{M}_{t}^{\delta}$ and Markov's inequality. Let $n^{*}_{j}=\inf\{n:t_{m_{n}}\geq s_{j+1}\}$. Then $|t_{m_{n^{*}_{j}}}-s_{j}|\leq 2\theta$ and by Doob's $L^{p}$ inequality,
\begin{eqnarray*}\mathbb{E}\big[\sup_{\{n:t_{m_{n-1}}\text{ or } t_{m_{n}}\in[s_{j},s_{j+1}]\}}|\mathcal{M}^{\delta_{k}}_{t_{m_{n}}}-\mathcal{M}^{\delta_{k}}_{s_{j}}|^{p}\big]
&\leq& C_{5}(p)\mathbb{E}\big[\big|\mathcal{M}^{\delta_{k}}_{t_{m_{n^{*}_{j}}}}-\mathcal{M}^{\delta_{k}}_{s_{j}}\big|^{p}\big]
\\ &\leq& C_{6}(p)\theta^{\frac{p}{2}},
\end{eqnarray*}
where the last inequality uses  (\ref{nbeq4}) (for some constants $C_{5}(p)$ and $C_{6}(p)$ depending only on $p$). Hence,
\begin{eqnarray*}&&\limsup_{k\rightarrow\infty}\mathbb{P}\Big[ \sup_{t\in[0,T]}\big|(\mathcal{H}_{t}^{\delta_{k}}|\mathfrak{A}_{\infty}^{+})-\mathcal{N}_{t}\big|>\widehat{\epsilon}\Big]\\&\leq&
\limsup_{k\rightarrow\infty}\Big( \mathbb{P}\Big[ \max_{j=0,\ldots,M-1}\sup_{t\in[s_{j},s_{j+1}]}|\mathcal{N}_{s_{j}}-\mathcal{N}_{t}|>\frac{\widehat{\epsilon}}{2}\Big] +C_{7}(p)\frac{\theta^{\frac{p}{2}-1}}{\widehat{\epsilon}^{p}}\Big)
\end{eqnarray*}
Then choosing $p>2$ and using the modulus of continuity of $\mathcal{N}_{t}$ given by (\ref{ppeq7}) we get the result by picking $\theta$ arbitrarily small.
\end{proof}
We now establish the subsequential driving term convergence of the $+\partial$CBP.
\begin{theorem}\label{coro3}
Suppose that $T>0$ and $\mathbf{D}=(D,a,b)\in\mathcal{D}$. Let $\widetilde{\xi}^{\delta}_{t}$ denote the driving function of the $+\partial$CBP from $a$ to $b$ in $D$ on the lattice of mesh size $\delta$. Then for any sequence $(\delta_{k})$ with $\delta_{k}\searrow 0$ as $k\rightarrow \infty$, $\widetilde{\xi}^{\delta_{k}}_{t}$ has a subsequence which converges  uniformly in probability to an $\epsilon$-semimartingale on $[0,T]$.
\end{theorem}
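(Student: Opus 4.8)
The plan is to reduce Theorem \ref{coro3} to Theorem \ref{mel2} by realizing the $+\partial$CBP as a concatenation of $+$CBP segments. Recall from Section \ref{sect3} that the $+\partial$CBP is obtained from the $+$CBP by conditioning, at \emph{every} non-free vertex, that the path turns rather than going straight, while at every free vertex the two paths have the same transition mechanism by Lemma \ref{lem3}. Write $\widetilde{\nu}^{\delta}$ for the $+\partial$CBP from $a$ to $b$ in $D$, let $0=j_{0}<j_{1}<j_{2}<\cdots$ be the steps at which $\widetilde{\nu}^{\delta}$ visits a non-free vertex, and let $\tau_{i}$ be the half-plane capacity time at which $\widetilde{\nu}^{\delta}$ reaches its $j_{i}$-th non-free vertex. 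Then, conditionally on $\widetilde{\nu}^{\delta}[0,\tau_{i-1}]$ and on the forced step taken out of the $(i-1)$-th non-free vertex, the portion of $\widetilde{\nu}^{\delta}$ strictly between the $(i-1)$-th and $i$-th non-free vertices is distributed as a $+$CBP in the slit domain $D\setminus\widetilde{\nu}^{\delta}[0,\tau_{i-1}]$, run until it first reaches a non-free vertex. Thus, on a single probability space, $\widetilde{\nu}^{\delta}$ can be built by concatenating $+$CBP segments, each started at the previous endpoint in the current slit domain, with one forced perpendicular step interposed at each junction.

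First I would re-run the driving-function analysis of the preceding sections for $\widetilde{\xi}^{\delta}_{t}$, block by block. Fixing $\eta$ and the stopping times $m_{n}$ as before, formulas (\ref{weq6}), (\ref{zeq7}) and (\ref{zeq11}) again yield a decomposition $\widetilde{\xi}^{\delta}_{t_{m_{n}}}=\widetilde{\mathcal{H}}^{\delta}_{t_{m_{n}}}+\widetilde{\mathcal{R}}^{\delta}_{t_{m_{n}}}$ of the form (\ref{ppeq2}). The key point is that the \emph{interior} of each inter-junction segment contains only free vertices, at which, by Lemma \ref{lem3}, the turning variable $L_{k}$ is symmetric and independent of the past --- precisely the input used in the proofs of Lemmas \ref{mel3}, \ref{mel5} and \ref{mel6} and in the symmetry step of Lemma \ref{mel6}. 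Hence those lemmas, and with them the estimate (\ref{brb10}), carry over to the $+\partial$CBP once the contributions of the non-free vertices are controlled. At a non-free vertex $L_{k}$ is either still symmetric (if a left/right choice survives the conditioning) or deterministic given the past (if the turn is forced), so in either case it contributes no net drift beyond a quantity controlled by $|a_{j_{i}}(\tau_{i})-b_{j_{i}}(\tau_{i})|$. As in Proposition \ref{prop13}, the crucial facts are that $\sum_{i}|a_{j_{i}}(\tau_{i})-b_{j_{i}}(\tau_{i})|$ and $\sum_{i}V^{(1)}_{\tau_{i-1}}(\tau_{i})$, summed over all junctions up to capacity time $T$, are bounded uniformly in $\delta$ by a constant multiple of $T$ (disjoint capacity increments), and that the maximal winding sum $\widetilde{\mathcal{W}}^{\delta}\triangleq\max_{l}\bigl|\sum_{j=2}^{l}L_{j}\bigr|$ still has all moments bounded uniformly in $\delta$ by Lemma \ref{turn1}, whose proof uses only the brick-wall geometry and independence in disjoint rectangles and is therefore unaffected by the extra conditioning at non-free vertices.

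Granting these ingredients, the argument of Proposition \ref{prop13} produces $(\widetilde{\mathcal{R}}^{\delta}_{t}-\widetilde{\mathcal{R}}^{\delta}_{s})=\widetilde{\mathcal{W}}^{\delta}(\widehat{X}^{\delta}_{t}-\widehat{X}^{\delta}_{s})+(\widehat{Y}^{\delta}_{t}-\widehat{Y}^{\delta}_{s})$ with $\widehat{X}^{\delta},\widehat{Y}^{\delta}$ continuous, of finite variation, and satisfying the analogue of (\ref{kjeq1}) (using Proposition 3.76 of \cite{lawlerbook} for the capacity-versus-diameter bound across junctions); then Arzel\'{a}-Ascoli, the Helly selection principle and the continuity theorem of the Appendix (Theorem \ref{KC}) give a subsequence along which $\widetilde{\mathcal{R}}^{\delta_{k}}_{t}$ converges uniformly to a finite $(1+\epsilon)$-variation process. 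For the martingale part, (\ref{brb10}) lets one form $\widetilde{\mathcal{M}}^{\delta}_{t_{m_{n}}}$ as in (\ref{nbeq1}), a genuine martingale whose uniform distance from $\widetilde{\mathcal{H}}^{\delta}$ tends to $0$; the Skorokhod embedding, a diagonalization over rational times, the moment bound (\ref{nbeq4}) and the Kolmogorov-Centsov theorem then yield, along a further subsequence, a continuous limiting martingale, exactly as in the proof of Theorem \ref{mel2}. Adding the two limits shows that $\widetilde{\xi}^{\delta_{k}}_{t}$ has a subsequence converging uniformly --- and in probability, since the whole construction lives on one probability space --- to an $\epsilon$-semimartingale on $[0,T]$.

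The step I expect to be the main obstacle is the second paragraph: controlling the non-free vertices. These may occur in long runs, so that the inter-junction $+$CBP segments degenerate to single forced steps, and one must verify that such runs nevertheless produce no net drift in $\widetilde{\xi}^{\delta}$ beyond a finite-variation term; moreover one must check that all domain-dependent constants from the earlier sections (notably the constant $K$ in Lemma \ref{turn1}) can be chosen uniformly over the random slit domains $D\setminus\widetilde{\nu}^{\delta}[0,\tau_{i-1}]$, and that the Markov-type decomposition of the $+\partial$CBP into conditioned $+$CBP segments is valid. This is precisely where the explicit geometry of the shifted brick-wall lattice and the locality property of the HexP (inherited by the $\pm$CBP and $\pm\partial$CBP) are needed.
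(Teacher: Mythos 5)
Your proposal takes a genuinely different route from the paper, and there is a concrete gap in it. The paper does \emph{not} re-run the estimate chain (Lemmas \ref{mel3}--\ref{mel6}, Proposition \ref{prop13}, Theorem \ref{mel2}) for the $+\partial$CBP. Instead it constructs an explicit step-by-step coupling between the $+\partial$CBP and a concatenation of $+$CBP segments in successive slit domains: every time the two processes differ (a ``distinguishing'' vertex, Cases 1 and 2, Figures \ref{fig8}--\ref{fig9}), the paper modifies the $+$CBP piece by an explicit local detour so that the coupled paths remain within a $2\delta$-neighbourhood of one another. The Carath\'eodory kernel theorem then gives \eqref{yoeq2}, closeness of the driving functions, and the subsequential convergence for $\widetilde{\xi}^{\delta}_{t}$ is imported wholesale from Theorem \ref{mel2}. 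This sidesteps having to know that any of the earlier drift or symmetry estimates actually hold for the new conditioned process.

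Your approach, by contrast, has to re-derive Lemma \ref{mel6} for the $+\partial$CBP, and this is exactly where it breaks down. The zero-drift input $\mathbb{E}\big[W_{k,S_{l-1}-1}\,\big|\,\mathcal{F}_{\varsigma_\delta(t_{S_{l-1}})}\big]=0$ is proven in the paper via the locality and symmetry of the \emph{unconditioned} $+$BP, and then transferred to the $+$CBP only because the events $A^{+}_{j}$ carry no information at non-free vertices. For the $+\partial$CBP the additional conditioning is precisely a non-trivial constraint at the non-free vertices; the resulting $L_{k}$ there is a forced, deterministic turn, and forced turns can be systematically biased (for instance the path skirting a straight boundary segment is repeatedly forced to turn the same way). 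Your claim that ``in either case it contributes no net drift beyond a quantity controlled by $|a_{j_i}(\tau_i)-b_{j_i}(\tau_i)|$'' is exactly what would need to be proved and is not --- it amounts to a fresh proof of Lemma \ref{mel6} under a different law at the non-free steps, and neither the $+$BP locality identity nor the triviality of $A^{+}_{j}$ at non-free vertices is available in this setting. The same issue undercuts the claimed carry-over of \eqref{brb10}. In addition, the ``Markov decomposition'' of the $+\partial$CBP into $+$CBP excursions between non-free vertices, which the whole plan rests on, is stated but not verified; one must check both that the per-step conditioning defining the $+\partial$CBP leaves the free-vertex transitions genuinely unaffected and that restarting in the slit domain $D\setminus\widetilde{\nu}^{\delta}[0,\tau_{i-1}]$ reproduces exactly a $+$CBP there (the paper uses such a domain-Markov restart but only for the $+$CBP on the slit domain it explicitly constructs, and couples to it rather than identifying laws). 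You flag these as the ``main obstacle,'' and they are: the paper's coupling argument is specifically designed to avoid having to establish them.
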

\begin{proof}
We prove this theorem by repeatedly constructing a coupling of the +CBP with the $+\partial$CBP such that their respective driving functions are close. Fix $\delta>0$ sufficiently small and let $\nu_{0}$ be a +CBP from $a$ to $b$ in $D$, let $Z_{0},Z_{1},\ldots$ be the vertices of $\nu_{0}$. Similarly, let $\widetilde{\nu}_{0}$ be a $+\partial$CBP from $a$ to $b$ in $D$, and let $\widetilde{Z}_{0},\widetilde{Z}_{1},\ldots$ be the vertices of $\widetilde{\nu}_{0}$. Then we can couple $\nu_{0}$ and $\widetilde{\nu}_{0}$ until the first step $N$ where the number of possible choices for $Z_{N+1}$ is strictly greater than the number of possible values for $\widetilde{Z}_{N+1}$. We call the vertex $Z_{N}=\widetilde{Z}_{N}$ a \emph{distinguishing} vertex. For $j<k$, let $[Z_{j},Z_{k}]$ denote the subpath of $\nu_{0}$ between $Z_{j}$ and $Z_{k}$ --  in particular, $[Z_{j},Z_{j+1}]$ is the edge from $Z_{j}$ to $Z_{j+1}$.

Then there are two possibilities for $Z_{N}$:
\begin{enumerate}
\item[Case 1:] $Z_{N}$ is a free vertex of the +CBP.
\item[Case 2:] $Z_{N}$ is a non-free vertex of the +CBP.
\end{enumerate}

We consider Case 1. If $Z_{N}$ is a free vertex, then by definition of the +CBP, $[Z_{N},Z_{N+1}]$ is perpendicular to $[Z_{N-1},Z_{N}]$ and there are two possibilities for $[Z_{N},Z_{N+1}]$ which we denote $L$ and $R$. Since $Z_{N}$ is distinguishing, this means that either

\begin{itemize}
\item $[\widetilde{Z}_{N}, \widetilde{Z}_{N+1}]=L$ or;
\item $[\widetilde{Z}_{N}, \widetilde{Z}_{N+1}]=R$.
\end{itemize}
\pagebreak
\begin{figure}[hp]
 \begin{center}
\scalebox{0.5}{\includegraphics{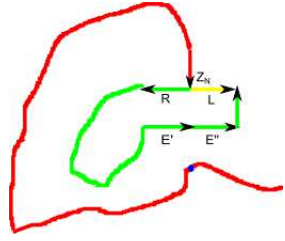}}
 \end{center}
\caption{Case 1. The green path denotes a possible path for the +CBP after $Z_{N}$ up to $Z_{M}$. We replace $[Z_{N},Z_{M}]$ with $R$. } \label{fig8}
\end{figure}

\begin{figure}[hp]
 \begin{center}
\scalebox{0.8}{\includegraphics{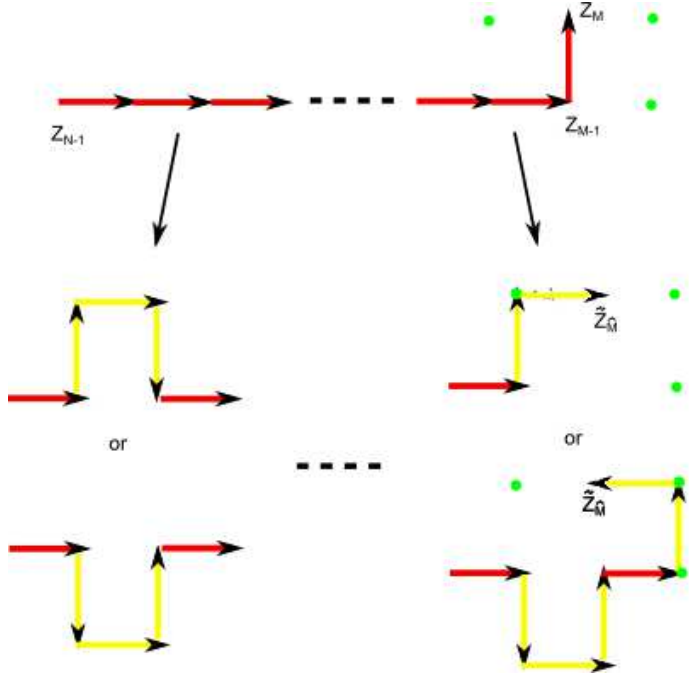}}
 \end{center}
\caption{We couple the paths $[Z_{N}, Z_{M_{0}}]$ with $[\widetilde{Z}_{N}, \widetilde{Z}_{\widehat{M}_{0}}]$ using the diagram. Note that the definition of the +CBP guarantees that only one of the choices at each part is possible and the connectivity of the boundary implies that we can always make such a coupling. } \label{fig9}
\end{figure}

Without loss of generality, we can assume that $[\widetilde{Z}_{N}, \widetilde{Z}_{N+1}]=L$. Let $\omega$ denote $Z_{N}-Z_{N-1}$. In this case, we must have $Z_{N}+\omega$ is not in $\partial D\cap\{Z_{n}\}_{n=0,\ldots,N}$ (otherwise, $Z_{N}$ would be non-free) and $Z_{N}+2\omega$ is in $\partial D\cap\{Z_{n}\}_{n=0,\ldots,N}$ (otherwise there would be two possible choices for $\widetilde{Z}_{N+1}$ so $Z_{N}$ would not be distinguishing). See Figure \ref{fig8}.

This means that any path from $Z_{N}$ to $b$ on the shifted brick-wall lattice whose first edge is $R$, must have two consecutive edges $E'=[Z_{M_{0}-3},Z_{M_{0}-2}]$, $E''=[Z_{M_{0}-2},Z_{M_{0}-1}]$ for some $M_{0}$ such that
\begin{enumerate}
\item $E'$ and $E''$ go in the same direction and;
\item $Z_{M_{0}-2}=Z_{N}+\omega$.
\end{enumerate}
We suppose that $Z_{M_{0}}=Z_{N+1}$. This situation is illustrated in Figure \ref{fig8}. Let $\widehat{M}_{0}=N+1$. Hence we can couple the paths $[\widetilde{Z}_{0}, \widetilde{Z}_{\widehat{M}_{0}}]$ and $[Z_{0}, Z_{M_{0}}]$ such that their respective harmonic measures with respect to a given point $\zeta \in D_{0}$ differ by at most $C(\zeta)\delta$. Here $D_{0}$ is the connected component of $D\setminus\{ z: \mathrm{dist}(z,[Z_{0},Z_{N}])\leq 3\delta\}$ with $b$ on its boundary.

We now consider Case 2 i.e. $Z_{N}$ is a non-free vertex. Let
\begin{eqnarray*}&&M_{0}\triangleq\min\big\{m \geq N+1 :  [Z_{m-2},Z_{m-1}] \text{ and } [Z_{m-1},Z_{m}] \text{ are perpendicular}\big\}.\end{eqnarray*}
We remark that $Z_{N+1},\ldots, Z_{M_{0}-1}$ are non-free vertices of the $+$CBP by definition, and $[Z_{N-1},Z_{M_{0}-1}]$ is a straight line. Note that $M_{0}\geq N+2$, since if $M_{0}=N+1$, then we can couple $[Z_{0},Z_{N+1}]$ with $[\widetilde{Z}_{0},\widetilde{Z}_{N+1}]$ so $Z_{N}$ (which is non-free) would not be distinguishing.
 Let \[\rho\triangleq\max\{k=0,1,2,\ldots:N+3k<M_{0}-1\}.\]
 For $\rho\geq 1$, we will show that we can couple
$[Z_{N-1},Z_{N-1+3\rho}]$ with a $+\partial$CBP path such that these paths are within a $2\delta$ neighbourhood of each other. We perform the coupling procedure inductively: Set $\mu_{0}=N-1$. Suppose that $k=1,\ldots,\rho-1$ and we can find a $\mu_{k-1}>\mu_{k-2}$ such that we can couple $[\widetilde{Z}_{N-1},\widetilde{Z}_{\mu_{k-1}}]$ and $[Z_{N-1},Z_{N-1+3(k-1)}]$ such that the paths are within a 2$\delta$ neighborhood of each other. Note that at the vertex $\widetilde{Z}_{\mu_{k-1}+1}$, the +$\partial$CBP either turns left with probability 1 or right with probability 1 (otherwise, if the $+\partial$CBP can either turn left or right with positive probability, this contradicts the fact that $Z_{N+3k}$ is non-free). If it turns left with probability 1, we couple $[Z_{0},Z_{N-1+3k}]$ with $[\widetilde{Z}_{0},\widetilde{Z}_{\mu_{k-1}}]\cup \mathcal{E}_{k}$ where $\mathcal{E}_{k}$ is the union of the edges passed through by turning left, right, right, then left at $\widetilde{Z}_{\mu_{k-1}}$ (see the the left hand side of the middle row of Figure \ref{fig9}). If the +CBP turns right with probability 1 at $\widetilde{Z}_{\mu_{k-1}+1}$, we couple $[Z_{0},Z_{N-1+3k}]$ with $[\widetilde{Z}_{0},\widetilde{Z}_{\mu_{k-1}}]\cup \mathcal{E}_{k}$ where $\mathcal{E}_{k}$ is now the union of the edges passed through by turning right, left, left, then right at $\widetilde{Z}_{\mu_{k-1}}$ (see the left hand side of the bottom row of Figure \ref{fig9}). We define $\mu_{k}$ such that $\widetilde{Z}_{\mu_{k}}$ is the last vertex of $\mathcal{E}_{k}$. Note that $[\widetilde{Z}_{0},\widetilde{Z}_{\mu_{k}}]$ and $[Z_{0},Z_{N-1+3k}]$ lie in a $2\delta$ neighbourhood of one another. This completes the inductive construction.

Finally, suppose that the +CBP turns left at $Z_{M_{0}-1}$ and suppose that $[Z_{N-1+3\rho},$ $Z_{M_{0}}]$ is not a possible sequence of edges for the $+\partial$CBP at $Z_{\mu_{\rho-1}-1}$ otherwise we automatically obtain a coupling and we stop.  Then we can couple $[Z_{0},Z_{M_{0}}]$ with $[\widetilde{Z}_{0},\widetilde{Z}_{\mu_{\rho-1}}]\cup \mathcal{E}_{\rho}$ where $\mathcal{E}_{\rho}$ is either
\begin{itemize}
\item the edge  passed through by turning left then right at $\widetilde{Z}_{\mu_{\rho-1}}$ (see the right hand side of the middle row of Figure \ref{fig9});
\item the union of the edges passed through by turning right, left, left, right, left, then left at $\widetilde{Z}_{\mu_{\rho-1}}$ (see the right hand side of the bottom row of Figure \ref{fig9}).
\end{itemize}
Similarly for the case where the +CBP turns right at $Z_{M_{0}-1}$ (exchanging ``left'' with ``right'').
 We define $\mu_{\rho}$ such that $\widetilde{Z}_{\mu_{\rho}}$ is the last vertex of $\mathcal{E}_{\rho}$.
We then define $\widehat{M}_{0}=\mu_{\rho}$. Hence we obtain a coupling of $[\widetilde{Z}_{0},\widetilde{Z}_{\widehat{M}_{0}}]$ and $[Z_{0},Z_{M_{0}}]$ such that the paths lie within a $2\delta$ neighbourhood of one another.

Let $\Gamma_{0}=[Z_{0},Z_{M_{0}}]$ and  $\widetilde{\Gamma}_{0}=[\widetilde{Z}_{0}, \widetilde{Z}_{\widehat{M}_{0}}]$. Let $\nu_{1}$ be a +CBP from $\widetilde{Z}_{\widehat{M}_{0}}$ to $b$ in $D\setminus\widetilde{\Gamma}_{0}$ and $\widetilde{\nu}_{1}$ be a $+\partial$CBP from $\widetilde{Z}_{\widehat{M}_{0}}$ to $b$ in $D\setminus\widetilde{\Gamma}_{0}$. We index the vertices of the +CBP and $+\partial $CBP by $M_{0}+1,M_{0}+2,\ldots$ and $\widetilde{M}_{0}+1,\widetilde{M}_{0}+2,\ldots$ respectively. We then apply the same argument to obtain $M_{1}>M_{0}$, $\widehat{M}_{1}>\widetilde{M}_{0}$, $\Gamma_{1}=[Z_{M_{0}},Z_{M_{1}}]$ and $\widetilde{\Gamma}_{1}=[ \widetilde{Z}_{\widehat{M}_{0}},\widetilde{Z}_{\widehat{M}_{1}}]$. Continuing inductively, we obtain two sequences of paths $\{\Gamma_{k}=[ \widetilde{Z}_{\widehat{M}_{k}},Z_{M_{k+1}}]\}$ and $\{\widetilde{\Gamma}_{k}=[ \widetilde{Z}_{\widehat{M}_{k}},\widetilde{Z}_{\widehat{M}_{k+1}}]\}$ such that  $\Gamma_{k}$ is a $+$CBP path starting from $Z_{M_{k-1}}$ in $D\setminus \bigcup_{j=0}^{k-1} \widetilde{\Gamma}_{j}$; $\widetilde{\Gamma}_{k}$ is a $+\partial$CBP path starting from $\widetilde{Z}_{\widehat{M}_{k-1}}$ in $D\setminus \bigcup_{j=0}^{k-1} \widetilde{\Gamma}_{j}$; and $\Gamma_{k}$ is coupled with $\widetilde{\Gamma}_{k}$ such that their respective driving functions are close.

For $k=0,1,2,\ldots$, let $\xi^{k,\delta}_{t}$ for $t\in[0,T^{\delta}_{k}]$ denotes the driving function of $\Gamma_{k}$ in $D\setminus \bigcup_{j=0}^{k-1} \widetilde{\Gamma}_{j}$ and
$\widetilde{\xi}^{k,\delta}_{t}$ for $t\in[0,\widetilde{T}^{\delta}_{k}]$ denotes the driving function of $\widetilde{\Gamma}_{k}$ in $D\setminus \bigcup_{j=0}^{k-1} \widetilde{\Gamma}_{j}$. Then, by the Carath\'{e}odory kernel theorem, for each $k=0,1,2,\ldots$
\begin{equation}\sup_{t\in[0,T_{k}\wedge \widetilde{T}_{k}]} \left|\widetilde{\xi}_{t}^{k,\delta}-\xi_{t}^{k,\delta} \right|\rightarrow 0 \text{ as }\delta\searrow 0. \label{yoeq2}\end{equation}

Now note that $\widetilde{\Gamma}=\widetilde{\Gamma_{0}}\cup\widetilde{ \Gamma}_{1} \cup \ldots$ is a $+\partial$CBP in $D$ from $a$ to $b$. Also, by Theorem \ref{mel2}, for any sequence of mesh-sizes $(\delta_{k})$,
we can find subsequence $(\delta_{n_{k}})$ such that the driving function of the non-crossing path $\Gamma=\Gamma_{0}\cup \Gamma_{1} \cup \ldots$ converges uniformly in distribution to an $\epsilon$-semimartingale $W_{t}$. Combining this fact with (\ref{yoeq2}), we get the result.
\end{proof}

\section{Driving term convergence of the bond percolation exploration process on the square lattice to SLE$_{6}$} \label{section8}
Since the decomposition of $\epsilon$-martingales is unique, we can consider an integral for $\epsilon$-semimartingales as the sum of a It\^{o} integral and a Young integral. In particular the calculus for $\epsilon$-semimartingales is identical to the calculus for usual semimartingales.
\pagebreak
 \begin{figure}[hp]
 \begin{center}
\scalebox{0.5}{\includegraphics{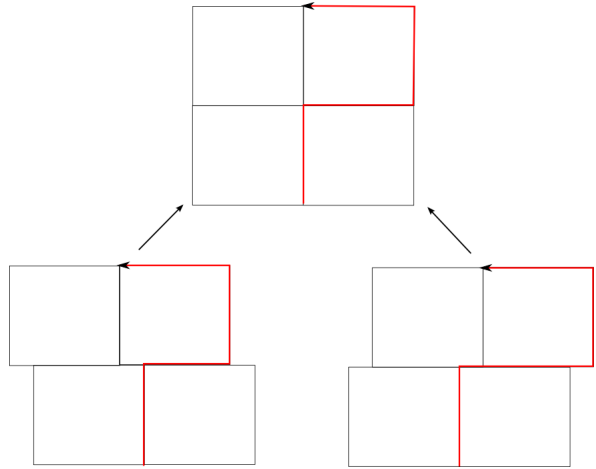}}
 \end{center}
\caption{A $+$BP/$-$BP on the shifted brick-wall lattice as the limit of a path on the $+\epsilon$-brick-wall lattice (on the right) and $-\epsilon$-brick-wall lattice (on the left). Note that the $+$BP cannot turn right for the next step but the $-$BP can turn right.} \label{fig13}
\end{figure}

Now define $Q:\mathbb{C}\rightarrow \mathbb{C}$ by $Q(x+iy)=\frac{\sqrt{3}}{3}x+i\frac{2}{3}y$. Then $Q$ maps the rectangles in the shifted brick-wall lattice of mesh-size $\delta$ to squares in the square lattice of mesh-size $\delta$.

Now consider a $+\partial$CBP, $\widetilde{\nu}$, on the shifted brick-wall lattice of mesh-size $\delta$ and a SqP, $\nu$, on the square lattice of mesh-size $\delta$. $\widetilde{\nu}$ is almost the same path as $Q^{-1}(\nu)$ except that at certain vertices of the lattice, $\widetilde{\nu}$ cannot create a loop at corner of two sites. In other words, some vertices of the path are non-free vertices of $\widetilde{\nu}$ but the corresponding vertices are free vertices of $\nu$ (in the sense defined in Sections \ref{sect2} and \ref{sect3}). This is due to topological restrictions of the $\epsilon$-brick-wall lattice for $\epsilon>0$ from which we constructed the +CBP and $+\partial$CBP. A more formal way to think of this is that, for $\epsilon>0$, the topology on the $\epsilon$-brick-wall lattice induces a fine topology on the shifted brick-wall lattice.

 However the vertices at which a $-\partial$CBP can create a loop are exactly the vertices of the lattice at which a $+\partial$CBP cannot create a loop. Moreover, from the remark at the end of Section \ref{sect3}, the driving function of the $-\partial$CBP also converges subsequentially to an $\epsilon$-semimartingale as we take the scaling-limit. See Figure \ref{fig13}.

We establish the subsequential driving term convergence for $Q^{-1}(\nu)$.
 \begin{proposition}\label{prop14}
For any $\mathbf{D}=(D,a,b)\in\mathcal{D}$ and $T>0$, let $\xi^{\delta} _{t}$ denote the driving function of $Q^{-1}(\nu)$ where $\nu$ is a bond percolation exploration process on the square lattice of mesh size $\delta>0$ in $\mathbf{D}$. Then for any sequence $(\delta_{k})$ with $\delta_{k}\searrow 0$ as $k\rightarrow \infty$,
 $\xi^{\delta_{k}} _{t}$ has a subsequence which converges uniformly in distribution to an $\epsilon$-semimartingale on $[0,T]$.
\end{proposition}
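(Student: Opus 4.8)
The plan is to realize $Q^{-1}(\nu)$ as a concatenation of $+\partial$CBP and $-\partial$CBP segments and then glue together the driving-term convergences supplied by Theorem \ref{coro3} and its mirror image for the $-\partial$CBP. Recall that a $+\partial$CBP coincides with $Q^{-1}$ of a SqP except at the vertices where the fine topology induced by the $\epsilon$-brick-wall lattice forbids the path from closing a loop at the corner of two sites; at precisely those vertices a $-\partial$CBP \emph{is} allowed to close the loop, and conversely. Accordingly I would introduce a sequence of stopping times $0=\sigma_{0}<\sigma_{1}<\sigma_{2}<\cdots$ marking the successive times at which $Q^{-1}(\nu)$ reaches such an obstructing vertex, and on $[\sigma_{2j},\sigma_{2j+1}]$ run a $+\partial$CBP, on $[\sigma_{2j+1},\sigma_{2j+2}]$ a $-\partial$CBP, each started from the current tip in the slit domain obtained from $D$ by removing the portion of the curve already drawn. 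The locality property of the SqP (Section \ref{sect2}), together with Lemmas \ref{lem1}--\ref{lem3} and the construction of the $\pm\partial$CBP in Section \ref{sect3}, guarantees that this alternating construction indeed has the law of $Q^{-1}(\nu)$.

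Next I would pass to the Loewner driving function. Writing $g_{t}$ for the Loewner maps of the whole curve and using the composition rule for Loewner chains, the driving function $\xi^{\delta}_{t}$ on the time interval corresponding to the $k$-th segment is the image, under the conformal map uniformizing the already-explored hull, of the driving function of that segment viewed as a $\pm\partial$CBP in its own slit domain. By Theorem \ref{coro3} (for the $+\partial$CBP) and the reflection remark at the end of Section \ref{sect3}, which yields the same statement for the $-\partial$CBP with driving function $-1$ times that of a $+\partial$CBP, each segment's driving function has, along any $\delta_{k}\searrow 0$, a subsequence converging uniformly in distribution to an $\epsilon$-semimartingale. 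The Carath\'{e}odory kernel theorem then transfers this convergence through the uniformizing maps, exactly as in (\ref{yoeq2}), and a diagonal argument over the segments extracts one subsequence that works simultaneously for all of them.

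Finally I would assemble the pieces: concatenating finitely many $\epsilon$-semimartingales across stopping times again gives an $\epsilon$-semimartingale, since the local-martingale parts glue to a local martingale and the finite $(1+\epsilon)$-variation parts glue to a process of finite $(1+\epsilon)$-variation, the decomposition being unique as noted at the start of Section \ref{section8}. To make this rigorous on a fixed horizon $[0,T]$ I need the number of alternations $\#\{j:\sigma_{j}\le T\}$ to be almost surely finite with tail uniformly controlled in $\delta$; this is where the argument has to work hardest. I would control it by the $\mathrm{hsiz}$/half-plane-capacity bookkeeping of Section \ref{notation}, since each obstructing vertex forces a definite move of the curve, so only boundedly many switches can occur before the capacity increases by a fixed amount, while the exponential winding estimate of Lemma \ref{turn1} keeps the accumulated coupling errors from the analogue of (\ref{yoeq2}) summable. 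The main obstacle is thus not a single estimate but precisely this uniform control of the switching together with the summability of the gluing errors; once it is in place, tightness follows from the uniform $L^{p}$ bounds already established (as in the proof of Theorem \ref{mel2}), and the limit is an $\epsilon$-semimartingale by uniqueness of the decomposition.
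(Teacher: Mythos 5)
Your segment decomposition is exactly the paper's: couple $Q^{-1}(\nu)$ with a $+\partial$CBP until the first obstructing vertex, switch to a $-\partial$CBP, and alternate; per segment, Theorem \ref{coro3} and the reflection remark give an $\epsilon$-semimartingale driving term. Up to that point you and the paper agree. But the assembly step you flag as "where the argument has to work hardest" is where the proposal breaks. You propose to control the number of switches $\#\{j:\sigma_{j}\le T\}$ uniformly in $\delta$ by $\mathrm{hsiz}$/capacity bookkeeping, claiming each obstructing vertex "forces a definite move of the curve". It does not: an obstructing vertex is one where the path is blocked from closing a loop of diameter $O(\delta)$, whose half-plane capacity contribution is only $O(\delta^{2})$. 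So the number of switches on $[0,T]$ can be of order $T/\delta^{2}$ and blows up as $\delta\searrow 0$; there is no hope of a uniform bound of the kind you ask for. Gluing countably many $\epsilon$-semimartingales with an unbounded number of switches is not automatically an $\epsilon$-semimartingale, and indeed that is exactly the problem the paper has to sidestep.

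The paper avoids the issue by never needing a bound on the number of segments. It proves \emph{whole-path} analogues of the two key estimates: Lemma \ref{boundaryCBP1}, which gives a uniform-in-$\delta$ exponential tail (hence all moments) of the winding $\widetilde{\mathcal{W}}^{\delta}$ of the \emph{entire} curve $Q^{-1}(\nu)$, and Lemma \ref{boundaryCBP2}, which gives the decomposition $\xi^{\delta}_{t}=\widetilde{\mathcal{W}}^{\delta}\widehat{X}^{\delta}_{t}+\widehat{Y}^{\delta}_{t}$ with $\sqrt{t-s}$ modulus for $\widehat{X}$ and $\widehat{Y}$ — again for the whole path, not per segment. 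These two lemmas feed directly into the method of Proposition \ref{prop13} and the Kolmogorov--Centsov estimate (Theorem \ref{KC}) applied to the concatenated process $\mathcal{R}^{*,\delta}_{t}=\mathcal{R}^{\rho^{\delta}_{t},\delta}_{t}$, producing a uniform bound on its $(1+\epsilon)$-variation regardless of how many switches occur. The segment-by-segment application of Theorem \ref{mel2} is used only to \emph{define} the candidate martingale and finite-variation pieces $\mathcal{H}^{*,\delta}$, $\mathcal{R}^{*,\delta}$; the \emph{convergence} and tightness come from the whole-path estimates. So the correct fix to your plan is to replace the per-segment gluing with the global winding bound of Lemma \ref{boundaryCBP1} and the global decomposition of Lemma \ref{boundaryCBP2}, exactly as the paper does, rather than trying to bound the number of alternations.
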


We first need a few lemmas in order to establish this proposition. As in the statement of the proposition, let $\xi^{\delta}_{t}$ denote the driving function of $Q^{-1}(\nu)$ where $\nu$ is a bond percolation exploration process on the square lattice of mesh size $\delta>0$ in $(D,a,b)$.

By (\ref{weq6}), we can write (using the notation in Section \ref{notation}),
\begin{equation}\xi_{t}^{\delta}=\frac{1}{2}\left(a_{1}(t)+b_{1}(t)\right)+\frac{1}{2}\big(\sum_{j=1}^{M}\rho_{j}(r_{j}-r_{j}(t))\big)
+\frac{1}{2}\big(\sum_{k=2}^{N(t)}\widetilde{L}_{k}(a_{k}(t)-b_{k}(t))\big),\label{yoeq1}\end{equation}
where  $(\widetilde{L}_{k})$ is the turning sequence of the $Q^{-1}(\nu)$.
We first need the following two lemmas.
\begin{lemma}\label{boundaryCBP1}
For $T>0$, let $\widetilde{N}^{\delta}\triangleq\inf\{n:t_{n}>t\}$ and
\[\widetilde{\mathcal{W}}^{\delta}\triangleq\max_{l=1,\ldots,\widetilde{N}^{\delta}}\sum_{j=2}^{l} \big|\widetilde{L}_{j}\big|.\]
Then
\[\mathbb{P}[\widetilde{\mathcal{W}}^{\delta}\geq l]<C_{1}e^{-C_{2}l}.\]
For constants $C_{1}$, $C_{2}$ not depending on $\delta$. In particular, for any $p>0$,
\[\mathbb{E}[(\widetilde{\mathcal{W}}^{\delta})^{p}]<\infty.\]
\end{lemma}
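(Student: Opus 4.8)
The plan is to adapt the proof of Lemma~\ref{turn1} to the present setting, using that $Q^{-1}(\nu)$ is, up to the fixed linear map $Q$, the interface of critical bond percolation on the square lattice, together with the Russo--Seymour--Welsh estimates, which are scale invariant and therefore yield constants independent of the mesh size $\delta$. Note that, compared with Lemma~\ref{turn1}, the statement is in fact simpler, since there is no additional conditioning.

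First I would reduce to a statement about the geometric winding of the SqP $\nu$ itself. Since $Q$ is a fixed invertible $\mathbb{R}$-linear map (in particular an orientation-preserving homeomorphism of $\mathbb{C}$), the winding number of $Q^{-1}(\nu)$ about any point coincides with that of $\nu$ about the image point, and consequently $\big|\sum_{j=2}^{l}\widetilde{L}_{j}\big|$ differs from a constant multiple of the total turning of the sub-arc of $\nu$ corresponding to its first $l$ vertices by at most an additive $O(1)$. Next, by (\ref{hsiz}) the hull generated by $\nu$ up to capacity time $T$ has $\mathrm{hsiz}$ bounded by a constant multiple of $T$, so the portion of $\nu$ traversed before step $\widetilde{N}^{\delta}$ is contained in a region $B_{T}\subset\mathbb{C}$ whose diameter depends only on $T$. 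Hence it suffices to show that the maximal winding of $\nu$ inside $B_{T}$ has exponential tails, uniformly in $\delta$.

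The heart of the argument is then the same spiralling estimate used in Lemma~\ref{turn1}. If $\widetilde{\mathcal{W}}^{\delta}\ge l$, then some sub-arc of $\nu$ inside $B_{T}$ has total turning of order $l$; since $\nu$ is a non-crossing path, it must wind around some region of the plane order $l$ times, i.e.\ it spirals. Such a spiral forces $\nu$ to traverse, one after another, at least $c\,l$ pairwise disjoint ``corridor boxes'' --- rectangular regions of aspect ratio bounded above and below and of Euclidean side length at least $\delta$ --- laid transversally across successive layers of the spiral. For the spiral to continue past a given box, the bond percolation configuration inside that box must realise a particular crossing event; by the RSW estimates for critical bond percolation on $\mathbb{Z}^{2}$, whose bounds are uniform over all scales, the complementary (``blocking'') event has probability at least some constant $c_{0}>0$, uniformly in $\delta$ and in the index of the box. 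Revealing the configuration corridor by corridor as we follow $\nu$, and using that the boxes are disjoint so that the fresh configuration in each is independent of all that has been explored before, we obtain $\mathbb{P}[\widetilde{\mathcal{W}}^{\delta}\ge l]\le(1-c_{0})^{cl}=C_{1}e^{-C_{2}l}$ with $C_{1},C_{2}$ independent of $\delta$. Summing the tail then gives $\mathbb{E}[(\widetilde{\mathcal{W}}^{\delta})^{p}]<\infty$ for every $p>0$, in fact uniformly in $\delta$.

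I expect the delicate point to be the geometric step: extracting from an arc of winding $\ge l$ the required family of $\Omega(l)$ disjoint corridor boxes of controlled shape, and identifying in each box the precise crossing event whose occurrence interrupts the spiral --- so that the RSW estimate applies with constants independent of the scale (hence of $\delta$), uniformly over the layers, even when the innermost layers of the spiral have shrunk down to the lattice scale. Once this combinatorial-geometric input is in place, the percolation estimate and the resulting moment bounds are routine.
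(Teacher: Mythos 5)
Your proposal is correct and follows essentially the same route as the paper, which simply invokes the spiralling argument from the second part of Lemma \ref{turn1}: a winding of order $l$ forces the path through order $l$ disjoint rectangles of controlled shape, and independence of the percolation configuration in those rectangles (with a uniformly positive blocking probability, which your RSW appeal supplies explicitly) gives the exponential tail. Your write-up in fact fills in more of the geometric and probabilistic detail than the paper does, and correctly reads $\sum_{j}|\widetilde{L}_{j}|$ as the intended winding $|\sum_{j}\widetilde{L}_{j}|$.
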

\begin{proof}
The proof is identical to the second part of the proof of Lemma \ref{turn1} using the independence of the process in disjoint rectangles.
\end{proof}
\begin{lemma}\label{boundaryCBP2}
We can write
\[\xi_{t}^{\delta}=\widetilde{\mathcal{W}}^{\delta} \widehat{X}_{t}^{\delta}+\widehat{Y}_{t}^{\delta},\]
where for $t>s$,
\begin{eqnarray*}
&&|\widehat{X}_{t}^{\delta}-\widehat{X}_{s}^{\delta}|\leq C\sqrt{t-s}
\\&&|\widehat{Y}_{t}^{\delta}-\widehat{Y}_{s}^{\delta}|\leq C\sqrt{t-s}
\end{eqnarray*}
for some constant $C>0$ not depending on $\delta$.
\end{lemma}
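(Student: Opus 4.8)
The plan is to rerun, in this unconditioned setting, the telescoping already performed for the $+$CBP in Proposition \ref{prop13} and Theorem \ref{mel2}. Starting from (\ref{yoeq1}), write $\xi_t^\delta=\widehat Y_t^\delta+S^\delta(t)$ with
\[\widehat Y_t^\delta\triangleq\tfrac12\big(a_1(t)+b_1(t)\big)+\tfrac12\sum_{j=1}^M\rho_j\big(r_j-r_j(t)\big),\qquad S^\delta(t)\triangleq\tfrac12\sum_{k=2}^{N(t)}\widetilde L_k\big(a_k(t)-b_k(t)\big),\]
and then exhibit $S^\delta(t)=\widetilde{\mathcal W}^\delta\widehat X_t^\delta$ for a process $\widehat X^\delta$ with the stated $\sqrt{\cdot}$-modulus. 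The natural choice is $\widehat X_t^\delta\triangleq S^\delta(t)/\widetilde{\mathcal W}^\delta$ whenever $\widetilde{\mathcal W}^\delta\ge1$ (which holds once two vertices have been visited, since $\widetilde L_2=\pm1$ as the SqP turns at every vertex), and $\widehat X_t^\delta\triangleq 0$ otherwise (in which case $S^\delta\equiv 0$ on $[0,T]$).

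The term $\widehat Y_t^\delta$ is easy: $a_1(t),b_1(t),r_1(t),\dots,r_M(t)$ are the images under $g_t=f_t^{-1}$ of points that do not depend on $t$ (prime ends over $\phi_{\mathbf D}^{-1}(Z_1)$, and the boundary points $r_{j,0}$). Writing $g_t=g_{s,t}\circ g_s$, where $g_{s,t}$ is the Loewner map of the hull $g_s(\gamma(s,t])$, which has half-plane capacity exactly $2(t-s)$, the standard bound $\sup_w|g_{s,t}(w)-w|\le C\,\mathrm{rad}\big(g_s(\gamma(s,t])\big)$ together with Proposition 3.76 of \cite{lawlerbook} and the half-plane-capacity parametrisation of $\gamma$ (exactly as used in the proof of Proposition \ref{prop13}) yields $|a_1(t)-a_1(s)|,|b_1(t)-b_1(s)|,|r_j(t)-r_j(s)|\le C\sqrt{t-s}$, with $C$ depending only on $\mathbf D$ and $T$; since $M$ and the $\rho_j$ are fixed, this gives $|\widehat Y_t^\delta-\widehat Y_s^\delta|\le C\sqrt{t-s}$.

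The main point — and the only place the argument needs real care — is to bound $S^\delta(t)-S^\delta(s)$ in such a way that the \emph{winding} $\widetilde{\mathcal W}^\delta$, and not the uncontrolled vertex count $N(t)-N(s)$, appears as the multiplicative factor; this is precisely what the double telescoping of Proposition \ref{prop13} accomplishes, so the hard part is really just the bookkeeping. Since successive preimage intervals nest, $[b_{k+1}(t),a_{k+1}(t)]\subseteq[b_k(t),a_k(t)]$, and consequently (by the Loewner equations, the gaps between consecutive preimages being nonincreasing, as in Proposition \ref{prop13}) the quantities $\Delta_j(t)\triangleq\big(a_j(t)-b_j(t)\big)-\big(a_{j+1}(t)-b_{j+1}(t)\big)\ge0$ are nonincreasing in $t$, so that $\Phi_j(s,t)\triangleq\Delta_j(s)-\Delta_j(t)\ge0$. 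Telescoping in the vertex index expresses $a_k(t)-b_k(t)$, respectively its increment over $[s,t]$, as a sum of $\Delta_j$'s, respectively of $-\Phi_j$'s, plus a single ``tip'' term; then, splitting $S^\delta(t)-S^\delta(s)$ into the contributions of $k\le N(s)$ and of $N(s)<k\le N(t)$, substituting, and exchanging the order of summation, every partial sum of the $\widetilde L_k$'s that occurs has modulus $\le 2\widetilde{\mathcal W}^\delta$, while the accompanying one-signed sums of $\Delta_j$'s and $\Phi_j$'s telescope back to terms of the form $\big(a_2(t)-b_2(t)\big)-\big(a_2(s)-b_2(s)\big)$, $\big(a_{N(s)}(t)-b_{N(s)}(t)\big)-\big(a_{N(s)}(s)-b_{N(s)}(s)\big)$, $a_{N(s)+1}(t)-b_{N(s)+1}(t)$ and $a_{N(t)}(t)-b_{N(t)}(t)$. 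Each of these is $\le C\sqrt{t-s}$ by the capacity/radius estimate above, the two absolute ``tip'' terms being handled by observing that $g_{t_{N(s)+1}}(Z_{N(s)+1})=\xi_{t_{N(s)+1}}$ and that the remaining hull $g_{t_{N(s)+1}}(\gamma(t_{N(s)+1},t])$ has capacity $\le 2(t-s)$. Hence $|S^\delta(t)-S^\delta(s)|\le C\,\widetilde{\mathcal W}^\delta\sqrt{t-s}$ uniformly in $\delta$, so $\widehat X^\delta=S^\delta/\widetilde{\mathcal W}^\delta$ satisfies $|\widehat X_t^\delta-\widehat X_s^\delta|\le C\sqrt{t-s}$, which is the claim.
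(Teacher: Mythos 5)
Your argument is correct and is essentially the paper's own proof: the paper disposes of this lemma by pointing back to the telescoping that produces (\ref{ppeq6}) and (\ref{kjeq2}) in the proof of Theorem \ref{mel2} (itself borrowed from Proposition \ref{prop13}), and your double telescoping in the vertex index, with the one‑signed increments $\Phi_j$ collapsing to boundary terms controlled via Proposition 3.76 of \cite{lawlerbook}, is exactly that computation carried out explicitly for the turning sequence $(\widetilde L_k)$ of $Q^{-1}(\nu)$. The only additions you make are housekeeping the paper leaves implicit (the degenerate case $\widetilde{\mathcal W}^\delta=0$ and the separate $\sqrt{t-s}$ bound for $\widehat Y^\delta$), so no further comment is needed.
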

\begin{proof}
This lemma follows from telescoping the second sum in (\ref{yoeq1}) as in the part of the proof of Theorem \ref{mel3} leading up to (\ref{ppeq6}) and (\ref{kjeq2}).
\end{proof}
\begin{proof}[Proof of Proposition \ref{prop14}]
Let $\nu$ be a SqP from $a$ to $b$ in $D$ and let $\overline{\nu}=Q^{-1}(\nu)$ which is a path on the shifted brick-wall lattice. Let $Z_{0},Z_{1},\ldots$ be the vertices of $\overline{\nu}$.  We use an iterative coupling method as in the proof of Theorem \ref{coro3}. Let $\nu_{0}$ be a $+\partial$CBP from $Q^{-1}(a)$ to $Q^{-1}(b)$ in $Q^{-1}(D)$ and denote its vertices by $\widetilde{Z}_{0},\widetilde{Z}_{1}$. Then by construction, we can couple $\overline{\nu}$ and $\nu_{0}$ until the first step $N_{0}$ such that $Z_{N_{0}}$ is free but $\widetilde{Z}_{N_{0}}$ is non-free.

Now let $\nu_{1}$ be a $-\partial$CBP from  $Z_{N_{0}}$ to $Q^{-1}(b)$ in $Q^{-1}(D)$ and for simplicity, we will slightly abuse the notation and write the vertices of $\nu_{1}$ as $\widetilde{Z}_{N_{0}},\widetilde{Z}_{N_{0}+1},\widetilde{Z}_{N_{0}+2},\ldots$ . By the discussion preceding the statement of the proposition, $\widetilde{Z}_{N_{0}}$ is a free vertex of $\nu_{1}^{*}$, a $-\partial$CBP on $D\setminus\gamma[0,t_{N_{0}}]$ from $Z_{N_{0}}$ to $b$ (since it is a non-free vertex of $\nu_{0}$). We then can couple $\nu_{1}$ with the subpath of $\overline{\nu}$ starting from $Z_{N_{0}}$ until the first step $N_{1}\geq N_{0}$ that $Z_{N_{1}}$ is free but $\widetilde{Z}_{N_{1}}$ is non-free.

We now let $\nu_{2}$ be a $+\partial$CBP from  $Z_{N_{1}}$ to $Q^{-1}(b)$ in $Q^{-1}(D)$ and as before, we write the vertices of $\nu_{1}$ as $\widetilde{Z}_{N_{1}},\widetilde{Z}_{N_{1}+1},\widetilde{Z}_{N_{1}+2},\ldots$ . As above we can couple $\nu_{2}$ with the subpath of $\overline{\nu}$ starting from $Z_{N_{1}}$ until the first step $N_{2}\geq N_{1}$ that $Z_{N_{2}}$ is free but $\widetilde{Z}_{N_{2}}$ is non-free. We proceed inductively, alternating the coupling with the $-\partial$CBP and the $+\partial$CBP.

This implies that the Loewner driving function of $\overline{\nu}$ satisfies
\[\xi ^{\delta}_{t}=\widetilde{\xi}^{\rho^{\delta}_{t},\delta}_{t},\]
where $\rho^{\delta}_{t}=\sup\{k:t_{N_{k}}<t\}$ and for $k=0,2,4,\ldots$, $\widetilde{\xi}^{k,\delta}_{t}$ is the driving function of a $+\partial$CBP from time $t_{N_{k}}$ to $t_{N_{k+1}}$; and for  $k=1,3,5,\ldots$, $\widetilde{\xi}^{k,\delta}_{t}$ is the driving function of a $-\partial$CBP from time $t_{N_{k}}$ to $t_{N_{k+1}}$.
We let $\Xi_{t}^{\delta}\triangleq \widetilde{\xi}^{\rho^{\delta}_{t},\delta}_{t}$.

By Lemmas \ref{boundaryCBP1} and \ref{boundaryCBP2}, we apply the same method as in the last part of the proof of Proposition \ref{prop13} to show that, for any sequence $(\delta_{j})$ with $\delta_{j}\searrow 0 $ as $j\rightarrow\infty$, there exists subsequence $(\delta_{n_{j}})$ such that $\xi^{\delta_{n_{j}}}_{\cdot}$ converges weakly. This implies that $\Xi_{\cdot}^{\delta_{n_{j}}}$ also converges weakly.

Now by Theorem \ref{mel2}, for any sequence $(\delta_{j})$ such that $\delta_{j}\searrow 0$ as $j\rightarrow \infty$,
we can find subsequence $(\delta_{n_{j}})$ such that $\widetilde{\xi}^{k,\delta_{n_{j}}}_{t}$ can be decomposed into
\[\widetilde{\xi}^{k,\delta_{n_{j}}}_{t}= \mathcal{H}^{k,\delta_{n_{j}}}_{t}+\mathcal{R}^{k,\delta_{n_{j}}}_{t}\]
where for each $k=0,1,2,\ldots$, $\mathcal{H}_{t}^{k,\delta_{n_{j}}}$  converges in distribution to a martingale and $\mathcal{R}_{t}^{k,\delta_{n_{j}}}$ converges in distribution to a finite $(1+\epsilon)$-variation process as $j\rightarrow \infty$.  Then we can write
\[\Xi_{t}^{\delta_{n_{j}}}=\mathcal{H}^{*,\delta_{n_{j}}}_{t}+\mathcal{R}^{*,\delta_{n_{j}}}_{t}\]
where
\[\mathcal{H}^{*,\delta}_{t}\triangleq \mathcal{H}^{\rho^{\delta}_{t},\delta}_{t}, \]
\[\mathcal{R}^{*,\delta}_{t}\triangleq \mathcal{R}^{\rho^{\delta}_{t},\delta}_{t}. \]
In particular, $\mathcal{H}^{*,\delta}_{t}$ and $\mathcal{R}^{*,\delta}_{t}$ are continuous.

 By Lemma \ref{boundaryCBP1}, the winding of $Q^{-1}(\nu)$ is uniformly $L^{p}$ bounded. By this fact and the version of the Kolmogorov-Centsov continuity theorem in the Appendix (Theorem \ref{KC}), for each $\delta_{n_{j}}$, we can find an almost surely finite random variable $B^{\delta_{n_{j}}}_{\epsilon}$ such that
\[\sup_{D}\sum_{i=0}^{N-1} |\mathcal{R}^{*,\delta_{n_{j}}}_{t_{i+1}}-\mathcal{R}^{*,\delta_{n_{j}}}_{t_{i}}|^{\frac{1}{1-\epsilon}}\leq B^{\delta_{n_{j}}}_{\epsilon}|\omega^{\delta_{n_{j}}}_{t}-\omega^{\delta_{n_{j}}}_{s}| \]
for some finite variation process $\omega^{\delta_{n_{j}}}_{t}$ where the supremum is taken over all partitions $D=\{s=t_{0}<t_{1}<\ldots<t_{N}=t\}$. Moreover, $B^{\delta_{n_{j}}}_{\epsilon}$ is $L^{p}$ bounded for any $p>1$ and hence there exists a subsequential weak limit $B_{\epsilon}$ as $j\rightarrow\infty$. Also, from the proof of Proposition \ref{prop13} and Theorem \ref{KC}, $\omega^{\delta_{n_{j}}}_{t}$ can be taken as the variation of $X^{\delta_{n_{k}}}_{t}$ which is bounded uniformly by the sum of finitely many monotonic increasing and decreasing functions. This implies that
$\omega^{\delta_{n_{j}}}_{t}$ converges almost surely to some finite variation process $\omega_{t}$ as $j\rightarrow\infty$. Hence, by passing to a further subsequence, $\mathcal{R}^{*,\delta_{n_{j}}}_{t}$
converges to a finite $(1+\epsilon)$-variation process as $j\rightarrow \infty$.

This also implies that
$\mathcal{H}^{*,\delta_{n_{j}}}_{t}$
also converges uniformly in distribution as $j\rightarrow\infty$ to the sum of martingale differences which is a martingale.
\end{proof}

For any sequence $\Delta=(\delta_{k})$ with $\delta_{k}\searrow 0$ as $k\rightarrow \infty$ such that $\xi^{\delta_{k}} _{t}$ converges to an $\epsilon$-semimartingale, we denote the limit by $V^{\Delta,\mathbf{D}}_{t}$.

Now consider $Q^{-1}(\nu)$. We approximate $Q^{-1}(\nu)$ by a smooth path $\nu^{\delta}$  by smoothing out the corners of the path in the interior of each site (see Figure \ref{fig14}). Mapping $\nu^{\delta}$ to $\mathbb{H}$, we get a curve $\gamma^{\delta}:[0,\infty)\mapsto\overline{\mathbb{H}}$  parametrized by half-plane capacity with chordal driving function $\xi^{\delta}(t)$ and  associated conformal map $g_{t}^{\delta}$ satisfying the chordal Loewner differential equation. Then let $g^{Q,\delta}_{t}$ be the conformal map of $\mathbb{H}\setminus Q\circ\gamma(0,t]$ onto $\mathbb{H}$ that are normalized hydrodynamically. Let $Q^{\delta}_{t}=g^{Q,\delta}_{t}\circ Q \circ (g^{\delta}_{t})^{-1}$.
Then $g^{Q,\delta}_{t}$ satisfies
\begin{equation}\label{rot1}\dot{g}_{t}^{Q,\delta}(z)=\frac{b_{Q,\delta}(t)}{g_{t}^{Q,\delta}(z)-Q^{\delta}_{t}(\xi^{\delta}(t))}\end{equation}
for some $b_{Q,\delta}(t)>0$.
Note that $Q^{\delta}_{t}$ can be extended to a quasiconformal mapping on a full neighbourhood of $\xi^{\delta}(t)$ in $\mathbb{C}$ by reflection.

\begin{lemma}\label{Qt}
 $Q^{\delta}_{t}(x+iy)$ has smooth partial derivatives with respect to $x$ and $y$ and differentiable with respect to $t$ at $z\in\mathbb{R}$ sufficiently close to $\xi^{\delta}(t)$.
\end{lemma}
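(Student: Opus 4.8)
The plan is to reduce the statement to the behaviour of the two uniformising maps near their slit tips and to exploit a cancellation between the square‑root boundary behaviour of one and the square behaviour of the other. Write $\psi_{t}:=(g^{\delta}_{t})^{-1}:\mathbb{H}\to\mathbb{H}\setminus\gamma^{\delta}(0,t]$ and $h_{t}:=g^{Q,\delta}_{t}$, so that $Q^{\delta}_{t}=h_{t}\circ Q\circ\psi_{t}$. Since $\nu^{\delta}$ is obtained by smoothing the corners of $Q^{-1}(\nu)$ in the interior of each site, we may take $\gamma^{\delta}$ to be a real‑analytic arc which, for $t>0$, meets $\partial\mathbb{H}$ only at its initial point and whose tip $\gamma^{\delta}(t)$ lies in the open half‑plane. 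The $\mathbb{R}$‑affine map $Q$ is real‑analytic, orientation preserving, satisfies $Q(\mathbb{R})=\mathbb{R}$ and $\overline{Q(z)}=Q(\bar z)$, and has $|\partial_{z}Q|>|\partial_{\bar z}Q|$; hence $Q\circ\gamma^{\delta}$ is again a real‑analytic arc with tip $Q(\gamma^{\delta}(t))\in\mathbb{H}$, and $Q^{\delta}_{t}$ is a quasiconformal self‑map of $\mathbb{H}$ carrying $\mathbb{R}$ (near $\xi^{\delta}(t)$) into $\mathbb{R}$ (near $\xi_{Q}(t):=Q^{\delta}_{t}(\xi^{\delta}(t))$), so it extends by Schwarz reflection to a full $\mathbb{C}$‑neighbourhood of each such real point, as already noted before the statement.

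First I would record the boundary regularity of $\psi_{t}$ and $h_{t}$ at the slit tips. By the reflection principle across the analytic side‑arcs of the slit, together with the classical square‑root description of a conformal map at a slit tip, $\psi_{t}$ extends analytically to a neighbourhood of $\xi^{\delta}(t)$ in $\mathbb{C}$ with $\psi_{t}(\xi^{\delta}(t))=\gamma^{\delta}(t)$, $\psi'_{t}(\xi^{\delta}(t))=0$ and $\psi''_{t}(\xi^{\delta}(t))\neq0$; symmetrically $h_{t}^{-1}$ extends analytically to a neighbourhood of $\xi_{Q}(t)$ with $h_{t}^{-1}(\xi_{Q}(t))=Q(\gamma^{\delta}(t))$, $(h_{t}^{-1})'(\xi_{Q}(t))=0$ and $(h_{t}^{-1})''(\xi_{Q}(t))\neq0$ (equivalently, $h_{t}$ has a square‑root branch at $Q(\gamma^{\delta}(t))$). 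Away from $\xi^{\delta}(t)$ each of $\psi_{t}$, $Q$, $h_{t}$ is real‑analytic (resp.\ real‑affine) in $(x,y)$, so $Q^{\delta}_{t}=h_{t}\circ Q\circ\psi_{t}$ is $C^{\infty}$ in $(x,y)$ on a neighbourhood of $\{z\in\mathbb{R}:0<|z-\xi^{\delta}(t)|\text{ small}\}$.

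The heart of the matter, and the step I expect to be the main obstacle, is the tip $\xi^{\delta}(t)$ itself. Set $u=w-\xi^{\delta}(t)$; then $\psi_{t}(w)-\gamma^{\delta}(t)=\beta u^{2}(1+O(u))$ with $\beta\neq0$, so $Q(\psi_{t}(w))-Q(\gamma^{\delta}(t))=(\partial_{z}Q)\beta u^{2}+(\partial_{\bar z}Q)\bar\beta\,\bar u^{2}+O(|u|^{3})$, a function of $(\operatorname{Re}u,\operatorname{Im}u)$ vanishing to order exactly $2$, which for real $u$ equals $u^{2}\bigl((\partial_{z}Q)\beta+(\partial_{\bar z}Q)\bar\beta+O(u)\bigr)$ with non‑zero leading coefficient because $|\partial_{z}Q|>|\partial_{\bar z}Q|$. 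Composing with $h_{t}$, which near $Q(\gamma^{\delta}(t))$ is the local inverse of a map with a simple critical point, converts this order‑$2$ vanishing into order‑$1$ vanishing; so along $\mathbb{R}$ the square of $\psi_{t}$ and the square root of $h_{t}$ cancel (the branch and sign being fixed by $Q^{\delta}_{t}(\mathbb{H})=\mathbb{H}$, $Q^{\delta}_{t}(\mathbb{R})\subset\mathbb{R}$), and $Q^{\delta}_{t}$ is $C^{\infty}$ on $\mathbb{R}$ through $\xi^{\delta}(t)$, real‑analytic on either side of it. Reflecting across $\mathbb{R}$ then produces the asserted partial derivatives in $x$ and $y$; the one delicate point is to carry out this cancellation \emph{uniformly} up to the moving tip, where the naive composition formula is only Lipschitz, so that one must work with the tangential derivative along $\mathbb{R}$ and its one‑sided normal counterpart rather than with a blind $C^{1}$‑estimate.

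Finally, for differentiability in $t$: parametrised by half‑plane capacity, $\gamma^{\delta}$ is a smooth function of $t$, hence the domains $\mathbb{H}\setminus\gamma^{\delta}(0,t]$ and $\mathbb{H}\setminus Q\circ\gamma^{\delta}(0,t]$ vary smoothly, and by smooth dependence of the solution of the chordal Loewner ODE and of the uniformising maps on the moving domains, $\psi_{t}$ and $h_{t}$ (and hence $Q^{\delta}_{t}(z)$) are differentiable in $t$, uniformly on neighbourhoods of $z$ bounded away from the tip. Combining this with the tip analysis of the previous paragraph (and with the fact that $\xi^{\delta}(t)$ itself moves differentiably in $t$, by Corollary~\ref{couple} applied to the smooth curve $\gamma^{\delta}$) gives differentiability of $Q^{\delta}_{t}(z)$ in $t$ at every $z\in\mathbb{R}$ sufficiently close to $\xi^{\delta}(t)$, completing the proof.
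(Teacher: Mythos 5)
Your route is genuinely different from the paper's, and considerably more explicit. The paper's proof is very short: it simply asserts that, because $\gamma^{\delta}$ is smooth, $Q^{\delta}_{t}$ can be extended to a smooth function for real $z$ near $\xi^{\delta}(t)$, with no decomposition and no analysis of the slit tip; and for $t$-differentiability it uses a purely algebraic recentering trick, namely setting $\overline{Q}_{t}(z)=Q^{\delta}_{t}(z+\xi^{\delta}(t))-Q^{\delta}_{t}(\xi^{\delta}(t))$, observing $\overline{Q}_{t}(0)=0$ so that $\dot{\overline{Q}}_{t}(0)=0$, and then transferring back. You instead factor $Q^{\delta}_{t}=h_{t}\circ Q\circ\psi_{t}$ with $\psi_{t}=(g^{\delta}_{t})^{-1}$ and $h_{t}=g^{Q,\delta}_{t}$, trace the square-root branch of $h_{t}$ against the simple critical point of $\psi_{t}$, and obtain $t$-differentiability from smooth dependence of the Loewner ODE on a smoothly varying domain. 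Both are legitimate; yours buys a concrete description of what actually happens at the tip, which the paper never exhibits, while the paper's recentering device cheaply reduces $t$-differentiability to known smoothness away from the tip plus the normalization $\overline{Q}_{t}(0)=0$.

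One substantive caution, which you flag but should state as the actual conclusion rather than a side remark. Since $Q$ is $\mathbb{R}$-affine but not conformal ($\partial_{\bar z}Q\neq0$), the quantity under the square root at the tip is a genuine quadratic form $Au^{2}+B\bar u^{2}$ with $A=(\partial_{z}Q)\,\psi_{t}''(\xi^{\delta}(t))/2$ and $B=(\partial_{\bar z}Q)\,\overline{\psi_{t}''(\xi^{\delta}(t))}/2$, both nonzero. Its square root is homogeneous of degree $1$ in $u$ but is not $\mathbb{R}$-linear unless $B=0$; equivalently, the Beltrami coefficient $\mu_{Q^{\delta}_{t}}(z)=\mu_{Q}\cdot\overline{\psi_{t}'(z)}/\psi_{t}'(z)$ has no limit as $z\to\xi^{\delta}(t)$. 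Thus $Q^{\delta}_{t}$ is not Fr\'echet differentiable (and certainly not $C^{\infty}$) at $\xi^{\delta}(t)$: only one-sided directional derivatives along the coordinate axes exist there, which is exactly what the subsequent expression of $\dot{Q}^{\delta}_{t}(\xi^{\delta}(t))$ in the paper actually uses. So the "cancellation" you describe gives smoothness along $\mathbb{R}$ through the tip and along the inward normal, but a blind $C^{1}$ or $C^{\infty}$ claim at the tip would be false; make the tangential/one-sided-normal statement the target. The paper's proof does not engage with this subtlety at all, so on this point your argument is strictly more careful, provided you sharpen the conclusion accordingly.
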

\begin{proof}
First note that since $\gamma^{\delta}$ is smooth, $Q^{\delta}_{t}$ can be extended to a smooth function for $x\in\mathbb{R}$ sufficiently close to for $\xi^{\delta}(t)$. Also, the fact that $\gamma^{\delta}$ is a smooth curve implies that $\xi^{\delta}(t)$ and $Q_{t}^{\delta}\circ \xi^{\delta}(t)$ are smooth as functions of $t$ as well. If we define $\overline{Q}_{t}(z)\triangleq Q^{\delta}_{t}(z+\xi^{\delta}(t))-Q^{\delta}_{t}(\xi^{\delta}(t))$, then $\overline{Q}_{t}(0)=0$ and hence
$\dot{\overline{Q}}_{t}(0)=0$. Since $Q^{\delta}_{t}(z)=\overline{Q}_{t}(z-\xi^{\delta}(t))+Q_{t}(\xi^{\delta}(t))$, this implies that $Q^{\delta}_{t}(z)$ is differentiable with respect to $t$ at $z=\xi^{\delta}(t)$.

\pagebreak
 \begin{figure}[hp]
 \begin{center}
\scalebox{0.5}{\includegraphics{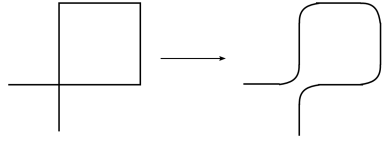}}
 \end{center}
\caption{Approximating a path on the lattice by a smooth path.} \label{fig14}
\end{figure}

Furthermore, from the Loewner differential equation and (\ref{rot1}), we have
\[\dot{Q}^{\delta}_{t}(\xi^{\delta}(t))=\frac{b_{Q,\delta}(t)}{Q^{\delta}_{t}(z)-Q^{\delta}_{t}(\xi^{\delta}(t))}-\frac{2}{z-\xi^{\delta}(t)}\frac{\partial}{\partial z}Q^{\delta}_{t}(z)\]

Hence we can write $\dot{Q}^{\delta}_{t}(\xi^{\delta}(t))$ in terms of the partial derivatives of $Q^{\delta}_{t}(x+iy)$ with respect to $x$ and $y$ evaluated at $z=\xi^{\delta}(t)$.
\end{proof}
By It\^{o}'s formula and Lemma \ref{Qt}, for subsequence $\Delta=(\delta_{k})$, $W^{\Delta,k,\mathbf{D}}_{t}\triangleq Q^{\delta_{k}}_{t}(V^{\Delta,\mathbf{D}}_{t})$ is an $\epsilon$-semimartingale. Also, since $\{Q_{t}^{\delta_{k}}\}$ forms a normal family, by passing to a further subsequence we can assume that $Q^{\delta_{k}}_{t}$ converges locally uniformly to a limit $Q_{t}$. We let $W^{\Delta,\mathbf{D}}_{t}\triangleq Q_{t}(V^{\Delta,\mathbf{D}}_{t})$. We write $W^{\Delta,k}_{t}=W^{\Delta,k,\mathbf{H}}_{t}$ and $W^{\Delta}_{t}=W^{\Delta,\mathbf{H}}_{t}$.
We also let $\xi_{t}^{\mathbf{D},\delta}$ denote the driving function of the SqP in $\mathbf{D}=(D,a,b)$ on the lattice of mesh-size $\delta$. Then note that for 
\begin{equation}\sup_{t\in[0,T]}|W_{t}^{\Delta,k,\mathbf{D}}-\xi_{t}^{\mathbf{D},\delta_{k}}|=o_{p}(1)\label{czeq1}\end{equation}
as $k\rightarrow \infty$. Here $o_{p}(1)$ denotes a random variable that converges in probability to $0$ as $k\rightarrow\infty$. Hence we can couple $W_{t}^{\Delta,k,\mathbf{D}}$ and $\xi_{t}^{\mathbf{D},\delta_{k}}$ such that
\[\sup_{t\in[0,T]}|W_{t}^{\Delta,k,\mathbf{D}}-\xi_{t}^{\mathbf{D},\delta_{k}}|\rightarrow 0 \text{ almost surely as } k\rightarrow\infty.\]

Let $M_{t}^{\Delta,k,\mathbf{D}}$ be the martingale part of $W_{t}^{\Delta,k,\mathbf{D}}$. We now adopt a localization argument since, in spite of (\ref{czeq1}), we cannot guarantee the uniform $L^{p}$ boundedness of $M_{t}^{\Delta,k,\mathbf{D}}$.
For each $N\in\mathbb{N}$, we define stopping times $\varphi^{\Delta,k\mathbf{D}}_{N}=\inf\{t:|W_{t}^{\Delta,k,\mathbf{D}}|\geq N\}$. Then $\varphi^{\Delta,k\mathbf{D}}_{N}$ converges to some stopping time $\varphi^{\Delta,\mathbf{D}}_{N}$ almost surely as $k\rightarrow\infty$. Then we can find a local martingale $M^{\Delta,\mathbf{D}}_{t}$ such that $M_{t\wedge\varphi_{N}^{\Delta,k,\mathbf{D}}}^{\Delta,k,\mathbf{D}}$ converges to $M_{t\wedge\varphi_{N}^{\Delta,\mathbf{D}}}^{\Delta,\mathbf{D}}$ since we trivially have uniform $L^{2}$ boundedness. As above, we write $M_{t}^{\Delta,k}=M_{t}^{\Delta,k,\mathbf{H}}$, $M_{t}^{\Delta}=M_{t}^{\Delta,\mathbf{H}}$ and 
$\varphi_{N}^{\Delta,k}=\varphi_{N}^{\Delta,k,\mathbf{H}}$,  $\varphi_{N}^{\Delta}=\varphi_{N}^{\Delta,\mathbf{H}}$.

We now apply the previous results to obtain convergence of the Loewner driving term of a SqP to $\sqrt{6}B_{t}$.

\begin{theorem}\label{th2}
For any  $T>0$ and $\mathbf{D}=(D,a,b)\in\mathcal{D}$. Let $\xi^{\mathbf{D},\delta}_{t}$ denote the driving function of the SqP in $(D,a,b)$ on the lattice of mesh size $\delta$. Then  for any sequence $(\delta_{k})$ with $\delta_{k}\searrow 0$ as $k\rightarrow\infty$, there is a subsequence $(\delta_{n_{k}})$ such that  $\xi^{\mathbf{D},\delta_{n_{k}}}_{t}$ converges uniformly in distribution to $\sqrt{6}B_{t}$ on $[0,T]$ as $k\rightarrow \infty$.
\end{theorem}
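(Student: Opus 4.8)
The plan is to identify the subsequential limit $W^{\Delta,\mathbf{D}}_{t}$, which is an $\epsilon$-semimartingale by Proposition \ref{prop14} and the discussion following it, as $\sqrt{6}B_{t}$. I would first settle the case $\mathbf{D}=(\mathbb{H},0,\infty)$ and then deduce the general case: once the SqP in $(\mathbb{H},0,\infty)$ is shown to converge to SLE$_{6}$, the locality property of the SqP (Section \ref{sect2}), which couples the SqP in any domain having the origin on its boundary with the SqP in $(\mathbb{H},0,\infty)$ up to the first exit of $D\cap\mathbb{H}$, together with the conformal invariance of SLE$_{6}$, propagates the conclusion to all $(D,a,b)\in\mathcal{D}$ (as in Camia--Newman \cite{CN07}). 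So fix $\mathbf{D}=(\mathbb{H},0,\infty)$ and write $W_{t}:=W^{\Delta,(\mathbb{H},0,\infty)}_{t}=M_{t}+A_{t}$, with $M_{t}$ the continuous local martingale part and $A_{t}$ the continuous finite-$(1+\epsilon)$-variation part of the decomposition.

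I would then record two invariances of $W$. \emph{Brownian scaling}: the SqP in $(\mathbb{H},0,\infty)$ on the lattice of mesh $\delta$, dilated by $\lambda>0$, is exactly the SqP on the lattice of mesh $\lambda\delta$; since half-plane capacity scales by $\lambda^{2}$ and Loewner driving functions transform accordingly, passing to the limit gives $(W_{\lambda^{2}t})_{t}\overset{d}{=}(\lambda W_{t})_{t}$ for every $\lambda>0$. \emph{Conformal Markov property}: at a stopping time $s$, the spatial Markov property of critical percolation says that, conditionally on the explored hull $\gamma[0,s]$, the remainder of the SqP is a fresh SqP in $\mathbb{H}\setminus\gamma[0,s]$ started from $\gamma(s)$; the locality property ensures that near the tip this fresh SqP is coupled with a fresh SqP in a half-plane, and carrying this through the conformal map $g_{s}$ one obtains that $W_{s+u}-W_{s}$ is, conditionally on $\mathcal{F}_{s}$, an independent copy of $W_{u}$ (the capacity parametrization makes the internal times agree). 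It is here that the $\epsilon$-semimartingale decomposition is essential: one must also check that the finite-$(1+\epsilon)$-variation part does not spoil the independence of increments in the limit. Hence $W$ is a continuous process with stationary independent increments; in particular $W_{T}$ equals in law a sum of $n$ i.i.d.\ copies of $W_{T/n}\overset{d}{=}n^{-1/2}W_{T}$, so $W_{T}$ is infinitely divisible.

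To finish I would argue as follows. A continuous process with stationary independent increments is a continuous L\'{e}vy process, hence $W_{t}=bt+\sigma B_{t}$ for some $b\in\mathbb{R}$, $\sigma\geq 0$ and a standard Brownian motion $B$; by uniqueness of the $\epsilon$-semimartingale decomposition, $A_{t}=bt$ and $M_{t}=\sigma B_{t}$. Brownian scaling forces $b\lambda^{2}=\lambda b$ for all $\lambda>0$, so $b=0$ and $W_{t}=\sigma B_{t}$; thus the scaling limit of the SqP in $\mathbb{H}$ is the chordal Loewner chain driven by $\sigma B_{t}$, i.e.\ SLE$_{\sigma^{2}}$. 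Finally, the locality property of the SqP passes to this limit, so SLE$_{\sigma^{2}}$ has the locality property; by the Loewner-flow computation of Lawler--Schramm--Werner \cite{LSW01a} (tracking $g^{A}_{t}\circ g_{t}$ for a compact $\mathbb{H}$-hull $A$ bounded away from $0$ and forcing the It\^{o} drift produced by the change of coordinates to vanish) this happens if and only if $\sigma^{2}=6$. Therefore $W_{t}=\sqrt{6}B_{t}$, and the reduction of the first paragraph completes the proof.

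The main obstacle is the conformal Markov property: upgrading the discrete spatial Markov property of percolation to genuine stationary independent increments of the continuum limit $W$. At finite mesh the driving-function increments are \emph{not} independent, since the slit domain $\mathbb{H}\setminus\gamma[0,s]$ is geometrically wild near the tip and carries a conformally distorted copy of the lattice; one must simultaneously use the spatial Markov property, the locality coupling (to flatten the slit domain near the tip), the conformal covariance of the limit (which itself has to be extracted from locality rather than assumed), and the $\epsilon$-semimartingale control of the finite-variation part, all while taking the mesh $\delta\searrow0$ in the correct order relative to the localization. Threading this is the technical heart of the argument.
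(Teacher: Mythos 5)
Your outline matches the paper's strategy at the top level: both arguments identify the subsequential limit by combining an infinite-divisibility/L\'{e}vy--Khintchine step (forcing the limit to have the form $bt+\sigma B_{t}$), an argument killing the drift (you use Brownian scaling of the lattice model, the paper uses the left--right symmetry of the SqP in $\mathbb{H}$; both are legitimate in principle, though with scaling you must additionally check that the subsequential limits along $(\delta_{k})$ and along $(\lambda\delta_{k})$ coincide), and the Lawler--Schramm--Werner locality computation to force $\sigma^{2}=6$ --- the paper carries out exactly this computation by applying It\^{o}'s formula to the locality maps $\Phi^{\mathbf{D},\delta_{k}}_{t}$, whose drift term is proportional to $\kappa_{\Delta}/2-3$.

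The genuine gap is the step you yourself flag but do not resolve: the claim that, conditionally on $\mathcal{F}_{s}$, $W_{s+u}-W_{s}$ is an independent copy of $W_{u}$. The discrete domain Markov property only says that the continuation is a fresh SqP in the slit domain $\mathbb{H}\setminus\gamma[0,s]$; to convert its driving function, after mapping by $g_{s}$, into a copy of the driving function of a SqP in a half-plane you need conformal invariance of the limit, which is the conclusion rather than a hypothesis --- locality only flattens the domain near the tip and for a short time, and does not by itself deliver this. The paper accordingly never proves independent increments of $W$. Instead it fixes a partition $0=s_{0}<\cdots<s_{N}=t$, uses the locality coupling to write the increment of the martingale part over $[s_{i-1},s_{i}]$ as $Q^{k}_{i}+\epsilon_{i}'$, where the $Q^{k}_{i}$ are genuinely i.i.d.\ (they are built from the SqP in $\mathbb{H}$ itself, merely reparametrized) and the errors $\epsilon_{i}'$ are controlled using the normalizations $(\Phi^{\mathbf{D},\delta}_{0})'(0)=1$ and $\dot{\sigma}_{\delta}(0)=1$, normal-family compactness, the It\^{o} isometry and the Burkholder--Davis--Gundy inequality, so that $\sum_{i}\epsilon_{i}'\rightarrow 0$ in probability as the mesh of the partition shrinks; an analogous decomposition using Young's inequality handles the finite-$(1+\epsilon)$-variation part separately (this is where the uniqueness of the $\epsilon$-semimartingale decomposition is actually used). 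The limit is then infinitely divisible as the limit of a uniformly asymptotically negligible triangular array, which is what licenses the L\'{e}vy--Khintchine conclusion. Without this (or an equivalent) mechanism your second paragraph does not go through, and since everything downstream rests on it, the proposal is incomplete precisely at its load-bearing step.
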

\begin{proof}
Let $(Z_{k} )$ denote the vertices of a SqP $\gamma$ parametrized by the half-plane capacity in $\mathbb{H}$ from $0$ to $\infty$ on the square lattice of mesh size $\delta>0$. Now take any $\mathbf{D}=(D,a,b)\in\mathcal{D}$ such that $\mathrm{diam}(D)>2\delta$ and consider the SqP $\gamma^{\mathbf{D}}$ in $D$ from $a$ to $b$ on the lattice of mesh size $\delta$. By translation, we can assume $a=0$ since the driving function $\xi^{\mathbf{D},\delta}_{t}$ does not change under translation. Similarly, we can rotate $D$ by a multiple of $\pi/2$ radians about 0 without changing the driving function. Hence, without loss of generality, we can assume that $D\cap\mathbb{H}\neq \emptyset$.
 Let $(Z_{k}^{\mathbf{D}})$ denote the vertices of a  SqP in $D$ from $a$ to $b$ on the square lattice of mesh size $\delta>0$. For each $\delta>0$, define a stopping time \[T^{\mathbf{D}}_{\delta}\triangleq\inf\{j: \mathrm{dist}( Z_{j} ,\partial (D\cap \mathbb{H})\setminus\mathbb{R})<2\delta\}.\]
Then $T^{\mathbf{D}}_{\delta}\not\equiv 0$. By the locality property, we can couple the two processes $(Z_{k} )$ and $(Z_{k}^{\mathbf{D}} )$ such that $Z_{k} =Z_{k}^{\mathbf{D}} $ for $k=0,\dots,T^{\mathbf{D}}_{\delta}$. This implies that we can couple  a time-change of the paths (since the time parametrizations of the two curves are different) up to a stopping time $\tau_{\delta}^{\mathbf{D}}$ i.e. for some increasing function $\sigma_{\delta}(t)$,
\[\gamma(\sigma^{\mathbf{D}}_{\delta}(t))=\phi_{\mathbf{D}}\circ\gamma^{\mathbf{D}}(t) \text{ for } t\in[0,\tau_{\delta}^{\mathbf{D}}].\]
Since \[g_{t}(\gamma(t))=\xi^{\delta}_{t}\text{ and } g_{t}^{\mathbf{D}}(\gamma^{\mathbf{D}}(t))=\xi_{t}^{\mathbf{D},\delta},\]
this implies that
\begin{equation}\xi_{t}^{\mathbf{D},\delta}=\Phi^{\mathbf{D},\delta} _{t}(\xi^{\delta}_{\sigma^{\mathbf{D}}_{\delta}(t)}),\label{ppe1}\end{equation}
where \[\Phi^{\delta}_{t}(z)=g^{\mathbf{D}}_{t}\circ \phi_{\mathbf{D}}\circ g_{\sigma^{\mathbf{D}}_{\delta}(t)}(r^{\mathbf{D}}_{\delta}z),\]
and $r^{\mathbf{D}}_{\delta}>0$ is chosen in such a way that
\begin{equation}\left(\Phi^{\mathbf{D},\delta}_{0}\right)'(0)=1.\label{coeq1}\end{equation}
Note that the Schwarz reflection principle implies that $\Phi^{\mathbf{D},\delta}_{t}$ can be extended analytically to a neighbourhood of $\xi_{\sigma_{\delta}(t)}$ in $\mathbb{C}$.
  Also, by (4.15) in \cite{lawlerbook} note that $\sigma^{\mathbf{D}}_{\delta}(t)$ satisfies
\begin{equation}\dot{\sigma}^{\mathbf{D}}_{\delta}(t)=\left(\Phi^{\mathbf{D},\delta}_{t}\right)'(\xi^{\delta}_{\sigma^{\mathbf{D}}_{\delta}(t)})^{2}.\label{ppe2}\end{equation}
so $\dot{\sigma}_{\delta}^{\mathbf{D}}(0)=1$. Moreover, since \[\{\Phi_{t}^{\mathbf{D},\delta}\}_{t\in[0,\tau_{\delta}^{\mathbf{D}}],D\in\mathcal{D},\delta>0}\] forms a normal family (by Montel's theorem), we can assume that
$\sigma_{\delta}^{\mathbf{D}}(u)=u+o(u)$ as $u\searrow 0$ where $o(u)$ does not depend on $\mathbf{D}$ or $\delta$.

Also, by Proposition 4.40 in \cite{lawlerbook}, \begin{equation}\dot{\Phi}_{t}^{\mathbf{D},\delta}(\xi^{\delta}_{\sigma^{\mathbf{D}}_{\delta}(t)})=-3(\Phi_{t}^{\mathbf{D},\delta})''(\xi^{\delta}_{\sigma^{\mathbf{D}}_{\delta}(t)}).\label{ppe3}\end{equation}

For any sequence $\Delta=(\delta_{k})$ such that $W^{\Delta,k}_{t}$ is defined, we consider
$\Phi_{t}^{\mathbf{D},\delta_{k}}(W^{\Delta,k}_{t})$. We can write
$dW^{\Delta,k}_{t}=X^{\Delta,k}_{t}dB_{t}+dY^{\Delta,k}_{t}$ where we can assume that $X^{\Delta,k}_{t}$ is right continuous at $t=0$ and $Y^{\Delta,k}_{t}$ is a finite $(1+\epsilon)$ variation process. Continuity of $\Phi^{\mathbf{D},\delta_{k}}_{t}$ and its derivatives and $Q^{\delta_{k}}_{t}$ along with (\ref{czeq1}) implies that from (\ref{ppe1}), (\ref{ppe2}) and (\ref{ppe3}), we get
\begin{equation}W^{\Delta,k,\mathbf{D}}_{t}=\Phi^{\mathbf{D},\delta_{k}} _{t}(W^{\Delta,k}_{\sigma^{\mathbf{D}}_{\delta_{k}}(t)})+ o_{p}(1),\label{weq3}\end{equation}
\[\dot{\sigma}^{\mathbf{D}}_{\delta_{k}}(t)=e^{ 2o_{p}(1)}\cdot(\Phi^{\mathbf{D},\delta_{k}}_{t})'(W^{\Delta,k}_{\sigma^{\mathbf{D}}_{\delta_{k}}(t)})^{2},\]
\[\dot{\Phi}_{t}^{\mathbf{D},\delta_{k}}(W^{\Delta,k}_{\sigma^{\mathbf{D}}_{\delta_{k}}(t)})=(-3+ o_{p}(1))(\Phi_{t}^{\mathbf{D},\delta_{k}})''(W^{\Delta,k}_{\sigma^{\mathbf{D}}_{\delta_{k}}(t)}).\]
where $o_{p}(1)$ denotes a random variable that converges in probability to $0$ as $k\rightarrow \infty$.
By It\^{o}'s formula,
\begin{eqnarray}&&
\Phi^{\mathbf{D},\delta_{k}} _{t}(W^{\Delta,k}_{\sigma^{\mathbf{D}}_{\delta_{k}}(t)})-\Phi_{0}^{\mathbf{D},\delta_{k}}(W^{\Delta,k}_{0}) \label{weq4}
\\&=&\int_{0}^{t}e^{-o_{p}(1)}X_{t}^{\Delta,k}\sqrt{\dot{\sigma}^{\mathbf{D}}_{\delta_{k}} (s)}dB_{\sigma^{\mathbf{D}}_{\delta_{k}} (s)}+\int_{0}^{t} (\Phi_{s}^{\mathbf{D},\delta_{k}})'(W^{\Delta,k}_{\sigma^{\mathbf{D}}_{\delta_{k}}(s)})dY^{\Delta,k}_{\sigma^{\mathbf{D}}_{\delta_{k}}(s)}\nonumber\\&&+\int_{0}^{t}e^{-2 o_{p}(1)}\left(\frac{(X_{\sigma^{\mathbf{D}}_{\delta_{k}}(s)}^{\Delta,k})^{2}}{2}-3+ o_{p}(1)\right)\frac{(\Phi_{s}^{\mathbf{D},\delta_{k}})''(W^{\Delta,k}_{\sigma^{\mathbf{D}}_{\delta_{k}}(s)})}{(\Phi_{s}^{\mathbf{D},\delta_{k}})'(W^{\Delta,k}_{\sigma^{\mathbf{D}}_{\delta_{k}}(s)})^{2}}ds\nonumber
\\&=&\int_{0}^{t}e^{-o_{p}(1)}X_{\sigma^{\mathbf{D}}_{\delta_{k}}(s)}^{\Delta,k}d\widetilde{B}_{s}+\int_{0}^{t} (\Phi_{s}^{\mathbf{D},\delta_{k}})'(W^{\Delta,k}_{\sigma^{\mathbf{D}}_{\delta_{k}}(s)})dY^{\Delta,k}_{\sigma^{\mathbf{D}}_{\delta_{k}}(s)}\nonumber\\&&+\int_{0}^{t}e^{-2 o_{p}(1)}\left(\frac{(X_{\sigma^{\mathbf{D}}_{\delta_{k}}(s)}^{\Delta,k})^{2}}{2}-3+ o_{p}(1)\right)\frac{(\Phi_{s}^{\mathbf{D},\delta_{k}})''(W^{\Delta,k}_{\sigma^{\mathbf{D}}_{\delta_{k}}(s)})}{(\Phi_{s}^{\mathbf{D},\delta_{k}})'(W^{\Delta,k}_{\sigma^{\mathbf{D}}_{\delta_{k}}(s)})^{2}}ds,\nonumber\end{eqnarray}
where
\[\widetilde{B}_{t}\triangleq\int_{0}^{t}\sqrt{\dot{\sigma}^{\mathbf{D}}_{\delta_{k}} (s)}dB_{\sigma_{\delta_{k}} (s)}\]
is also a standard 1-dimensional Brownian motion. Also, since $\sigma^{\mathbf{D}}_{\delta_{k}} (t)$ is locally Lipschitz,
\[\widetilde{Y}^{\Delta,k}_{t}\triangleq \int_{0}^{t} (\Phi_{s}^{\mathbf{D},\delta_{k}})'(W^{\Delta,k}_{\sigma^{\mathbf{D}}_{\delta_{k}}(s)})dY^{\Delta,k}_{\sigma^{\mathbf{D}}_{\delta_{k}}(s)}\]
is well-defined as a Young integral  (see \cite{LQ}). Since the integrand has modulus of continuity $\alpha<\frac{1}{2}$ given by Theorem \ref{mel2}, $\Phi^{\delta_{k}}_{t}$ is smooth and $Y^{\Delta,k}_{t}$ is of finite $(1+\epsilon)$-variation for sufficiently small $\epsilon>0$. Moreover,  $\widetilde{Y}^{\Delta,k}_{t}$ is of finite $(1+\epsilon)$-variation.

Now, consider $(\mathbb{H},0,\infty)$. Conditioned on $\gamma[0,s]$, the curve $\gamma(s+t)$ is identically distributed to the SqP on \[\mathbf{H}_{s}\triangleq (\mathbb{H}\setminus\gamma(0,s],\gamma(s),\infty).\] By (\ref{weq3}) and (\ref{weq4}), the martingale part of $W^{\Delta,k,\mathbf{H}_{s}}_{t}$ is
\[M^{\Delta,k,\mathbf{H}_{s}}_{t}\triangleq\int_{0}^{t}e^{- o_{p}(1)}X^{\Delta,k}_{\sigma^{\mathbf{H}_{s}}_{\delta_{k}}(u)}d\widetilde{B}_{u}+o_{p}(1)\]
However, we also have  $W^{\Delta,k,\mathbf{H}_{s}}_{t}$ is identically distributed to $W^{\Delta,k}_{t+s}-W^{\Delta,k}_{s}$ which is the driving function of $g_{s}(\gamma(s+t))$.
Hence for all $s,h>0$, $M^{\Delta,k}_{t+s}-M^{\Delta,k}_{s}$ conditioned on $\mathcal{F}_{s}$ has the same distribution as
\[\int_{0}^{t}e^{- o_{p}(1)}X^{\Delta,k}_{\sigma^{\mathbf{H}_{s}}_{\delta_{k}}(u)}d\widetilde{B}_{u}+o_{p}(1).\]
Thus for any partition,
\[\mathcal{P}_{k}=\{0=s_{0}<s_{1}<s_{2}<\ldots<s_{N-1}<s_{N}=t\wedge\varphi_{N}^{\Delta,k}\}\]
the distribution of \[ M^{\Delta,k}_{t}=\int_{0}^{t}X^{\Delta,k}_{u}dB_{u}\] is given by the convolution product of the distributions of
\[\widetilde{M}^{\Delta,k}_{s_{i}}-\widetilde{M}^{\Delta,k}_{s_{i-1}}=\int_{0}^{s_{i}-s_{i-1}}e^{- o_{p}(1)}X^{\Delta,k}_{\sigma^{\mathbf{H}_{s_{i-1}}}_{\delta_{k}}(u)}d\widetilde{B}^{i}_{u}\] conditioned on $\mathcal{F}_{s_{i-1}}$, where $\widetilde{B}_{t}^{i}$ are independent Brownian motions. By (\ref{coeq1}), for each $i=1,\ldots N$, we have $\dot{\sigma}^{\mathbf{H}_{s_{i-1}}}_{\delta_{k}}(0)=1$. Hence for small $u>0$, using the right continuity of $X_{t}^{\Delta,k}$ at $t=0$, we have
\begin{equation}X^{\Delta,k}_{\sigma^{\mathbf{H}_{s_{i-1}}}_{\delta_{k}}(u)}=X^{\Delta,k}_{u+o(u)},\label{coeq2}\end{equation}
where $o(u)$ does not depend on $i$.
We let
\[ Q_{i}^{k}\triangleq \int_{0}^{s_{i}-s_{i-1}} e^{- o_{p}(1)}X^{\Delta,k}_{u}d\widetilde{B}^{i}_{u}.\]
\[\epsilon'_{i}\triangleq \int_{0}^{s_{i}-s_{i-1}} e^{- o_{p}(1)}(X^{\Delta,k}_{\sigma^{\mathbf{H}_{s_{i-1}}}_{\delta_{k}}(u)}-X^{\Delta,k}_{u})d\widetilde{B}^{i}_{u}.\]
Hence, $\widetilde{M}^{\Delta,k}_{s_{i}}-\widetilde{M}^{\Delta,k}_{s_{i-1}}$ conditioned on $\mathcal{F}_{s_{i-1}}$ has distribution $Q^{k}_{i}+\epsilon'_{i}$ where $\{Q^{k}_{i}\}$ are i.i.d.  random variables. Since $\sum_{i=1}^{N}\epsilon'_{i}$ is the sum of martingale differences, we can apply the Burkholder-Davis-Gundy inequality, to show that
\[ \mathbb{E}\Big[\max_{k=1,\ldots,N}\Big|\sum_{i=1}^{k}\epsilon'_{i}\Big|^{2}\Big]\leq C\sum_{i=1}^{N}\mathbb{E}\Big[\int_{0}^{s_{i}-s_{i-1}}e^{-2o_{p}(1)}(X^{\Delta,k}_{\sigma^{\mathbf{H}_{s_{i-1}}}_{\delta_{k}}(u)}-X^{\Delta,k}_{u})^{2}du\Big] .\]
Then, It\^{o} isometry and the Burkholder-Davis-Gundy inequality implies
\[\sum_{i=1}^{N} \mathbb{E}\Big[\int_{0}^{s_{i}-s_{i-1}}(X^{\Delta,k}_{u})^{2}du\Big]=C_{1} \sum_{i=1}^{N} \mathbb{E}[(M^{\Delta,k}_{s_{i}}-M^{\Delta,k}_{s_{i-1}})^{2}]\leq C_{1}\mathbb{E}[(M^{\Delta,k}_{t\wedge\varphi_{N}^{\Delta,k}})^{2}]<\infty.\]
 Similarly, 
\begin{eqnarray*}&&\sum_{i=1}^{N} \mathbb{E}\Big[\int_{0}^{s_{i}-s_{i-1}}\Big(X^{\Delta,k}_{\sigma^{\mathbf{H}_{s_{i-1}}}_{\delta_{k}}(u)}\Big)^{2}du\Big]
\\&=&\sum_{i=1}^{N} \mathbb{E}\Big[\int_{0}^{(\sigma^{\mathbf{H}_{s_{i-1}}}_{\delta_{k}})^{-1}(s_{i}-s_{i-1})}(X^{\Delta,k}_{u})^{2}(\dot{\sigma}^{\mathbf{H}_{s_{i-1}}}_{\delta_{k}}(u))^{-1}du\Big]
\\&\leq& 3C_{2}\sum_{i=1}^{N} \mathbb{E}\Big[\int_{0}^{s_{i}-s_{i-1}}(X^{\Delta,k}_{u})^{2}du\Big]
< 3C_{2}\mathbb{E}[(M^{\Delta,k}_{t\wedge\varphi_{N}^{\Delta,k}})^{2}]<\infty,
\end{eqnarray*}
for some constant $C_{2}>0$ not depending on $N$. Here we have used the fact that $\sigma^{\mathbf{H}_{s_{i-1}}}_{\delta_{k}}(u)=u+o(u)$ with $o(u)$ not depending on $i$ which implies that we can cover, the sum of integrals from $0$ to $(\sigma^{\mathbf{H}_{s_{i-1}}}_{\delta_{k}})^{-1}(s_{i}-s_{i-1})$ with 3 times the integrals from $0$ to $s_{i}-s_{i-1}$ for $|\mathcal{P}_{k}|$ sufficiently small; and (\ref{coeq1}) and (\ref{ppe2}), since $\{\Phi_{t}^{\mathbf{D},\delta}\}$ forms a normal family, the rate of convergence is uniform which implies that 
\[(\dot{\sigma}^{\mathbf{H}_{s_{i-1}}}_{\delta_{k}}(u))^{-1}<C_{2}\]

Thus by (\ref{coeq2}), using the dominated convergence theorem, we get \[\sum_{i=1}^{N} \mathbb{E}\Big[\int_{0}^{s_{i}-s_{i-1}}e^{2o_{p}(1)}(X^{\Delta,k}_{\sigma^{\mathbf{H}_{s_{i-1}}}_{\delta_{k}}(u)}-X^{\Delta,k}_{u})^{2}du\Big]\rightarrow 0 \text{ as } |\mathcal{P}_{k}|\searrow 0.\]
Hence by the Markov inequality, we have
\[ \sum_{i=1}^{N}\epsilon'_{i}\rightarrow 0 \text{ in probability as } |\mathcal{P}_{k}|\searrow 0.\]
In particular, $\{Q_{i}^{k}+\epsilon'_{i} \}$ are uniformly asymptotically negligible.
This means that $M^{\Delta}_{t}$ is a continuous, infinitely divisible process that is also a local martingale. Hence by the L\'{e}vy-Khintchine theorem, we must have $M^{\Delta}_{t}=\sqrt{\kappa_{\Delta}}B_{t}$ for some $\kappa_{\Delta}\in\mathbb{R}$ and for all $t<\varphi_{N}^{\Delta}$. Since this is true for all $N\in\mathbb{N}$, we must have
$M^{\Delta}_{t}=\sqrt{\kappa_{\Delta}}B_{t}$ for all $t$.

Hence, we have
\begin{eqnarray}&&
\Phi^{\mathbf{D},\delta_{k}} _{t}(W^{\Delta,k}_{\sigma^{\mathbf{D}}_{\delta_{k}}(t)})-\Phi_{0}^{\mathbf{D},\delta_{k}}(W^{\Delta,k}_{0}) \label{hoeq1}
\\&=&\int_{0}^{t}e^{-o_{p}(1)}\sqrt{\kappa_{\Delta}}d\widetilde{B}_{s}+\int_{0}^{t} (\Phi_{s}^{\mathbf{D},\delta_{k}})'(W^{\Delta,k}_{\sigma^{\mathbf{D}}_{\delta_{k}}(s)})dY^{\Delta,k}_{\sigma^{\mathbf{D}}_{\delta_{k}}(s)}\nonumber\\&&+\int_{0}^{t}e^{-2 o_{p}(1)}\left(\frac{\kappa_{\Delta}}{2}-3+ o_{p}(1)\right)\frac{(\Phi_{s}^{\mathbf{D},\delta_{k}})''(W^{\Delta,k}_{\sigma^{\mathbf{D}}_{\delta_{k}}(s)})}{(\Phi_{s}^{\mathbf{D},\delta_{k}})'(W^{\Delta,k}_{\sigma^{\mathbf{D}}_{\delta_{k}}(s)})^{2}}ds.\nonumber\end{eqnarray}
Then since $W_{t}^{\Delta,k,\mathbf{H}_{s}}$ is identically distributed to $W_{t+s}^{\Delta,k}-W_{s}^{\Delta,k}$, we have $Y_{t+s}^{\Delta,k}-Y_{s}^{\Delta,k}$ conditioned on $\mathcal{F}_{s}$ has the same distribution as
\begin{eqnarray}\int_{0}^{t} &&(\Phi_{s}^{\mathbf{H}_{s},\delta_{k}})'(W^{\Delta,k}_{\sigma^{\mathbf{H}_{s}}_{\delta_{k}}(s)})dY^{\Delta,k}_{\sigma^{\mathbf{H}_{s}}_{\delta_{k}}(s)} \label{pkeq1} \\
&& \ +\int_{0}^{t}e^{-2 o_{p}(1)}\left(\frac{\kappa_{\Delta}}{2}-3+ o_{p}(1)\right)\frac{(\Phi_{s}^{\mathbf{H}_{s},\delta_{k}})''(W^{\Delta,k}_{\sigma^{\mathbf{H}_{s}}_{\delta_{k}}(s)})}{(\Phi_{s}^{\mathbf{H}_{s},\delta_{k}})'(W^{\Delta,k}_{\sigma^{\mathbf{H}_{s}}_{\delta_{k}}(s)})^{2}}ds. \nonumber
\end{eqnarray}
Thus for any partition,
\[\mathcal{P}=\{0=s_{0}<s_{1}<s_{2}<\ldots<s_{N-1}<s_{N}=t\},\]
the distribution of
\[\mathcal{Y}_{t}^{\mathcal{P}}\triangleq \sum_{i=1}^{N}\int_{0}^{s_{i}-s_{i-1}} (\Phi_{s}^{\mathbf{H}_{s_{i-1}},\delta_{k}})'(W^{\Delta,k}_{\sigma^{\mathbf{H}_{s_{i-1}}}_{\delta_{k}}(s)})dY^{\Delta,k}_{\sigma^{\mathbf{H}_{s_{i-1}}}_{\delta_{k}}(s)}\]
is given by the convolution product of the conditional distributions given $\mathcal{F}_{s_{i-1}}$ of
\[\int_{0}^{s_{i}-s_{i-1}} (\Phi_{s}^{\mathbf{H}_{s_{i-1}},\delta_{k}})'(W^{\Delta,k}_{\sigma^{\mathbf{H}_{s_{i-1}}}_{\delta_{k}}(s)})dY^{\Delta,k}_{\sigma^{\mathbf{H}_{s_{i-1}}}_{\delta_{k}}(s)}.\]
We let
\[R^{k}_{i}\triangleq Y^{\Delta,k}_{s_{i}-s_{i-1}},\]
\begin{eqnarray*}\epsilon''_{i}&\triangleq& \int_{0}^{s_{i}-s_{i-1}} (\Phi_{s}^{\mathbf{H}_{s_{i-1}},\delta_{k}})'(W^{\Delta,k}_{\sigma^{\mathbf{H}_{s_{i-1}}}_{\delta_{k}}(s)})dY^{\Delta,k}_{\sigma^{\mathbf{H}_{s_{i-1}}}_{\delta_{k}}(s)}-Y^{\Delta,k}_{s_{i}-s_{i-1}} \\
&=&\int_{0}^{s_{i}-s_{i-1}} \Big[(\Phi_{s}^{\mathbf{H}_{s_{i-1}},\delta_{k}})'(W^{\Delta,k}_{\sigma^{\mathbf{H}_{s_{i-1}}}_{\delta_{k}}(s)})\dot{\sigma}^{\mathbf{H}_{s_{i-1}}}_{\delta_{k}}(s)-1\Big] dY^{\Delta,k}_{s}
\\&&+\int_{s_{i}-s_{i-1}}^{\sigma_{\delta_{k}}^{\mathbf{H}_{s_{i-1}}}(s_{i}-s_{i-1})} (\Phi_{s}^{\mathbf{H}_{s_{i-1}},\delta_{k}})'(W^{\Delta,k}_{\sigma^{\mathbf{H}_{s_{i-1}}}_{\delta_{k}}(s)})\dot{\sigma}^{\mathbf{H}_{s_{i-1}}}_{\delta_{k}}(s) dY^{\Delta,k}_{s}.\end{eqnarray*}
Hence
\[\int_{0}^{s_{i}-s_{i-1}} (\Phi_{s}^{\mathbf{H}_{s_{i-1}},\delta_{k}})'(W^{\Delta,k}_{\sigma^{\mathbf{H}_{s_{i-1}}}_{\delta_{k}}(s)})dY^{\Delta,k}_{\sigma^{\mathbf{H}_{s_{i-1}}}_{\delta_{k}}(s)}\]
conditioned on $\mathcal{F}_{s_{i-1}}$ has the same as that of $R^{k}_{i}+\epsilon''_{i}$ where $\{R^{k}_{i}\}$ are i.i.d. random variables.

Then Young's inequality (\cite{LQ},\cite{Young}), states that:
\begin{quote} If $f$ is a function of finite $p$-variation, and $g$ is a function of finite $q$-variation with $\frac{1}{p}+\frac{1}{q}>1$, then
\[ \Big|\int_{0}^{1} f dg\Big|\leq C_{p,q}(|f(0)|+V_{p}(f))V_{q}(g)\]
where $V_{p}$ denotes the corresponding $p$-variation.
\end{quote}
We let, 
\[I_{i}=(\Phi_{s}^{\mathbf{H}_{s_{i-1}},\delta_{k}})'(W^{\Delta,k}_{\sigma^{\mathbf{H}_{s_{i-1}}}_{\delta_{k}}(s)})\dot{\sigma}^{\mathbf{H}_{s_{i-1}}}_{\delta_{k}}(s)-1.\]
Hence, applying Young's inequality with $q=1+\epsilon$, since the $1+\epsilon$ variation of $Y^{\Delta,k}_{t}$ from $0$ to $T$ is finite,
\begin{eqnarray*}\Big|\sum_{i=1}^{N}\epsilon''_{i}\Big|&\leq&C_{p,1+\epsilon}V_{1+\epsilon}(Y_{s}^{\Delta,k},0,t)\max_{i=1,\ldots,N}V_{p}(I_{i})
\\&\leq& K\max_{i=1,\ldots,N}V_{p}(I_{i}) 
\\&\leq& K \max_{i=1,\ldots,N} ||I_{i}||_{\infty}
\end{eqnarray*}
for some constant $K>0$. Here $V_{1+\epsilon}(Y_{s}^{\Delta,k},0,t)$ denotes the $1+\epsilon$ variation of $Y_{s}^{\Delta,k}$ from $0$ to $t$.

Then $||I_{i}||_{\infty}$ converges to 0  as $|\mathcal{P}|\searrow 0$ almost surely by (\ref{coeq1}) and (\ref{ppe2}) and since $\{\Phi_{t}^{\mathbf{D},\delta}\}$ forms a normal family, the rate of convergence is uniform; also, $\sigma^{\mathbf{H}_{s_{i-1}}}_{\delta_{k}}(u)=u+o(u)$ with $o(u)$ not depending on $i$.

Hence, $\{R_{k}^{i}+\epsilon''_{i}\}$ are uniformly asymptotically negligible. This implies that as $|\mathcal{P}|\searrow 0$, $\mathcal{Y}^{\mathcal{P}}_{t}$
converges to a continuous infinitely divisible process $\mathcal{Y}_{t}$ almost surely. Hence by the L\'{e}vy-Khintchine theorem and the fact that $\mathcal{Y}_{t}$ is of finite $(1+\epsilon)$-variation, we must have
\[\mathcal{Y}_{t}=b_{\Delta}t\]
for some $b_{\Delta}\in\mathbb{R}$.

In particular, by (\ref{pkeq1}),
\begin{eqnarray} Y^{\Delta,k}_{t}&=&\lim_{|\mathcal{P}|\searrow0}\sum_{i=1}^{N} Y^{\Delta,k}_{s_{i}}-Y^{\Delta,k}_{s_{i-1}}\nonumber\\
&=& b_{\Delta}t+\lim_{|P|\searrow 0}\sum_{i=1}^{N}\int_{0}^{s_{i}-s_{i-1}}e^{-2 o_{p}(1)}\left(\frac{\kappa_{\Delta}}{2}-3+ o_{p}(1)\right)\nonumber\\
&& \quad \quad \quad \quad \quad \quad \quad\times\frac{(\Phi_{s}^{\mathbf{H}_{s_{i-1}},\delta_{k}})''(W^{\Delta,k}_{\sigma^{\mathbf{H}_{s_{i-1}}}_{\delta_{k}}(s)})}{(\Phi_{s}^{\mathbf{H}_{s_{i-1}},\delta_{k}})'(W^{\Delta,k}_{\sigma^{\mathbf{H}_{s_{i-1}}}_{\delta_{k}}(s)})^{2}}ds.\label{bneq1} \end{eqnarray}
and hence is an element in Cameron-Martin space since the integrand is uniformly bounded for all $i$. and so it satisfies the Novikov condition.
Hence, using a Girsanov transformation, we can find a change of measure that makes  $W^{\Delta,k}_{t}=M^{\Delta,k}_{t}$. Since $M^{\Delta}_{t+s}-M^{\Delta}_{s}$, which is identically distributed to $M_{t}^{\Delta,\mathbf{H}_{s}}$ by construction, does not depend on $\gamma[0,s]$, this implies that we must have $\kappa_{\Delta}=6$ by (\ref{hoeq1}). Hence under our new measure, we must have
\[M^{\Delta,k}_{t}=\sqrt{6}B_{t}+o_{p}(1).\]
Hence under our original measure,
\begin{eqnarray*}&&
\Phi^{\mathbf{D},\delta_{k}} _{t}(W^{\Delta,k}_{\sigma^{\mathbf{D}}_{\delta_{k}}(t)})-\Phi_{0}^{\mathbf{D},\delta_{k}}(W^{\Delta,k}_{0})
\\&=&\int_{0}^{t}e^{-o_{p}(1)}\sqrt{\kappa_{\Delta}}d\widetilde{B}_{s}+\int_{0}^{t} (\Phi_{s}^{\mathbf{D},\delta_{k}})'(W^{\Delta,k}_{\sigma^{\mathbf{D}}_{\delta_{k}}(s)})dY^{\Delta,k}_{\sigma^{\mathbf{D}}_{\delta_{k}}(s)}\nonumber\\&&+\int_{0}^{t}e^{-2 o_{p}(1)}\left(o_{p}(1)\right)\frac{(\Phi_{s}^{\mathbf{D},\delta_{k}})''(W^{\Delta,k}_{\sigma^{\mathbf{D}}_{\delta_{k}}(s)})}{(\Phi_{s}^{\mathbf{D},\delta_{k}})'(W^{\Delta,k}_{\sigma^{\mathbf{D}}_{\delta_{k}}(s)})^{2}}ds.\nonumber\end{eqnarray*}

Then, by (\ref{bneq1}), we must have as $k\rightarrow \infty$, $Y_{t}^{\Delta,k}\rightarrow b_{\Delta}t$. By symmetry of the SqP in $\mathbb{H}$, we must have $b_{\Delta}=0$. We deduce that we must have
\[W^{\Delta}_{t}=\sqrt{6}B_{t}.\]
Hence, for any $\mathbf{D}=(D,a,b)$,
\[W^{\Delta,\mathbf{D}}_{t}=\sqrt{6}B_{t}\]
for $t\in[0,\tau^{D}]$.

To identify $W_{t}^{\Delta,\mathbb{D}}$ for $t>\tau^{\mathbf{D}}$, we can condition on $\gamma^{\mathbf{D}}[0,\tau^{\mathbf{D}}]$ and consider
\[\mathbf{D}'=(D\setminus \gamma^{\mathbf{D}}[0,\tau^{\mathbf{D}}],\gamma^{\mathbf{D}}(\tau^{\mathbf{D}}),\infty).\]
By repeating this argument inductively and using a Skorokhod embedding argument, we can deduce that
\[W^{\Delta,\mathbf{D}}_{t}=\sqrt{6}B_{t} \text{ for } t\in[0,\infty).\]
\end{proof}
\begin{corollary}
For any  $T>0$ and $\mathbf{D}=(D,a,b)\in\mathcal{D}$. Let $\xi^{\mathbf{D},\delta}_{t}$ denote the driving function of the SqP in $(D,a,b)$ on the lattice of mesh size $\delta$. Then  $\xi^{\mathbf{D},\delta}_{t}$ converges uniformly in distribution to $\sqrt{6}B_{t}$ on $[0,T]$ as $\delta\searrow0$.
\end{corollary}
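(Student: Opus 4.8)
The plan is to deduce the full convergence from the subsequential convergence already established in Theorem \ref{th2} via the standard subsequence principle for weak convergence. Recall that weak convergence of probability measures on the Polish space $C([0,T])$ (equipped with the uniform norm) is metrizable, for instance by the L\'evy--Prokhorov metric; consequently a sequence of $C([0,T])$-valued random variables converges in distribution if and only if every subsequence admits a further subsequence converging in distribution to the \emph{same} limit.

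First I would fix $\mathbf{D}=(D,a,b)\in\mathcal{D}$ and $T>0$, and regard $\xi^{\mathbf{D},\delta}_{\cdot}$ as a random element of $C([0,T])$. Given any sequence $(\delta_{k})$ with $\delta_{k}\searrow 0$, Theorem \ref{th2} produces a subsequence $(\delta_{n_{k}})$ along which $\xi^{\mathbf{D},\delta_{n_{k}}}_{\cdot}$ converges uniformly in distribution to $\sqrt{6}B_{\cdot}$ on $[0,T]$. The crucial observation is that the limiting law $\sqrt{6}B_{\cdot}$ is independent of the chosen sequence or subsequence: it is the very same object in every application of Theorem \ref{th2}. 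Hence every subsequence of $(\xi^{\mathbf{D},\delta}_{\cdot})_{\delta}$ possesses a further subsequence converging in distribution to $\sqrt{6}B_{\cdot}$.

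By the subsequence characterization quoted above, this forces the entire family $\xi^{\mathbf{D},\delta}_{\cdot}$ to converge uniformly in distribution to $\sqrt{6}B_{\cdot}$ as $\delta\searrow 0$, which is precisely the assertion of the corollary. I do not anticipate any genuine obstacle: the only points requiring (minor) care are to confirm that the relevant mode of convergence — weak convergence in $C([0,T])$ with the sup-norm topology — is indeed metric convergence, so that the subsequence principle applies verbatim, and that the tightness needed for subsequential limits to exist (already invoked inside the proof of Theorem \ref{th2}, and ultimately stemming from the uniform $L^{p}$ bounds in Lemmas \ref{boundaryCBP1}--\ref{boundaryCBP2} together with Proposition \ref{prop14}) holds uniformly in $\delta$. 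With these remarks in place the statement follows immediately.
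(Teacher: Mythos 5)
Your proof is correct, and it rests on the same underlying idea as the paper's argument — the subsequence characterization of convergence — but you implement it more directly and more cleanly. You work straightaway in the Polish space $C([0,T])$ with the sup-norm topology, note that weak convergence there is metrizable (e.g.\ by the L\'evy--Prokhorov metric), observe that Theorem~\ref{th2} supplies from every sequence $\delta_k\searrow 0$ a further subsequence converging weakly to the \emph{fixed} limit law of $\sqrt{6}B_{\cdot}$, and conclude by the subsequence principle. The paper instead goes through a more circuitous route: it first establishes pointwise-in-$t$ convergence of the one-dimensional marginals (via a subsequence argument for each fixed $t$), then diagonalizes over rational $t$, invokes the Skorokhod representation theorem to pass to almost sure pointwise convergence on a common probability space, and finally runs a contradiction argument to upgrade pointwise to uniform convergence on $\mathbb{Q}\cap[0,T]$, appealing to continuity of the driving functions and of Brownian motion to extend to all of $[0,T]$. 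Your version avoids the detour through finite-dimensional marginals and the Skorokhod coupling (which in the paper is applied somewhat informally across a continuum of $\delta$'s), and it makes the logical structure — subsequential convergence to a unique limit implies full convergence — transparent. The one point you correctly flag but should make explicit is that the subsequence principle presupposes that a convergent further subsequence actually exists for every subsequence; Theorem~\ref{th2} supplies exactly this, with tightness ultimately coming from the uniform moment bounds (Lemmas~\ref{boundaryCBP1} and \ref{boundaryCBP2}, Proposition~\ref{prop14}) used in its proof. With that noted, your argument is a valid and arguably tidier substitute for the paper's.
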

\begin{proof}
For each $t\in[0,T]$, suppose that a sequence $(\delta_{k})$ with $\delta_{k}\searrow 0$ as $k\rightarrow\infty$ is such that $\xi_{t}^{\mathbf{D},\delta_{k}}$ converges in distribution to some function $U_{t}$. Then, by Theorem \ref{th2}, there exists subsequence $(\delta_{n_{k}})$ such that $\xi_{t_{n_{k}}}^{\mathbf{D},\delta}$ converges in distribution to $\sqrt{6}B_{t}$. This implies that $U_{t}=\sqrt{6}B_{t}$. Since this is true for every subsequence $(\delta_{k})$, this implies that $\xi_{t}^{\mathbf{D},\delta_{k}}$ converges in distribution to $\sqrt{6}B_{t}$. Hence, via a standard diagonalization argument, for $t\in\mathbb{Q}\cap [0,T]$, $\xi_{t}^{\mathbf{D},\delta}$ converges pointwise in distribution to $\sqrt{6}B_{t}$. Hence, by the Skorokhod representation theorem, 
we can define a probability space such that for $t\in\mathbb{Q}\cap [0,T]$, $\xi^{\mathbf{D},\delta}_{t}$ converges pointwise to $\sqrt{6}B_{t}$ almost surely.

Then on this probability space,  for any sequence $\xi^{\mathbf{D},\delta_{k}}_{t}$ with $\delta_{k}\searrow 0$ as $k\rightarrow\infty$, by Theorem \ref{th2}, we can find subsequence $\xi^{\mathbf{D},\delta_{n_{k}}}_{t}$ with $\delta_{n_{k}}\searrow 0$ as $k\rightarrow\infty$ that converges uniformly to $\sqrt{6}B_{t}$ on $\mathbb{Q}\cap[0,T]$ almost surely.  Suppose for contradiction that $\xi^{\mathbf{D},\delta}_{t}$ does not converge uniformly to $\sqrt{6}B_{t}$ almost surely on $\mathbb{Q}\cap [0,T]$. Then we have some $\epsilon>0$ and points $t_{n_{k}}\in\mathbb{Q}\cap [0,T]$ such that as $k\rightarrow \infty$,
\[|\xi^{\mathbf{D},\delta_{n_{k}}}_{t_{n_{k}}}-\sqrt{6}B_{t_{n_{k}}}|\geq \epsilon.\]
This implies that $\xi^{\mathbf{D},\delta_{n_{k}}}_{t}$ does not converge uniformly to $\sqrt{6}B_{t}$  which is a contradiction. Hence we get the desired result by continuity of $\xi_{t}^{\mathbf{D},\delta}$ and $\sqrt{6}B_{t}$.
\end{proof}
\section{Obtaining curve convergence from driving term convergence}
Let $\gamma $ be the SqP in $\mathbf{D}=(D,a,b)$ on the lattice mesh-size $\delta>0$ and let
$\Gamma(t)$ be the trace of chordal SLE$_{6}$ in $\mathbf{D}$.
Theorem \ref{th2} does not imply strong curve convergence i.e. that the law of $\gamma[0,\infty]$ converges weakly to the law of $\Gamma[0,\infty]$ with respect to the metric $\rho_{\mathbf{D}}$ given in (\ref{metric}). In order to get this convergence and prove Theorem \ref{main}, we can either use a similar method of calculating multi-arm estimates as in \cite{werner07} or apply Corollary 1.6 in \cite{SS10}. We will focus on the latter method.

To this end, it suffices to show that the radial driving function with respect to any internal point of the curve $\gamma $ and show this converges to $\sqrt{6}B_{t}$ (which is the radial driving function of chordal SLE$_{6}$ with respect to any internal point by Proposition 6.22 in  \cite{lawlerbook}).
This can be done by applying a formula similar to (\ref{weq6}) for the radial driving function and applying the same method mutatis mutandis. We obtain this formula as follows: consider $\mathbf{D}=(D,a,b)\in\mathcal{D}^{\mathbb{L}}$ where $\mathbb{L}$ is either the shifted brick-wall lattice or square lattice of mesh size $\delta$. Fix a point $x\in D$ not on the lattice. Then we can find a unique conformal map $\phi_{\mathbf{D}}$ which maps the unit disc $\mathbb{D}=\{z:|z|<1\}$ conformally onto $D$ with $\phi_{\mathbf{D}}(0)=x$ and $\phi_{\mathbf{D}}'(0)=1$. Then by the Schwarz-Christoffel formula, we can write $\phi_{\mathbf{D}}$
\begin{equation}\phi_{\mathbf{D}}'(z)^{2}=R\prod_{j=1}^{M}(z-e^{ir_{j}})^{\rho_{j}}\label{aeq2}\end{equation}
for some $e^{ir_{j}}\in \partial \mathbb{D}$, $\rho_{j}\in\mathbb{R}$, $M\in\mathbb{N}$ and $R\neq 0$.

Now, let $\nu$  be a simple path on the lattice from $a$ to $b$ in $D$, $(Z_{k})$ denote the vertices of $\nu$. Let $\gamma:[0,T_{x}]\rightarrow\mathbb{D}$  be the parametrization of $\nu$ by capacity such that $\phi_{\mathbf{D},x}^{-1}(\gamma[0,T_{x}])=\nu$. Here, parameterizing by capacity means that if we denote by $g_{t}$ the conformal maps of $D_{t}=\mathbb{D}\setminus\gamma(t)$ onto $\mathbb{D}$ normalized such that $g_{t}(0)=0$ and $g_{t}'(0)>0$, then we have
\[g_{t}'(0)=e^{t}.\]
Note that the above $g_{t}$ satisfies the radial Loewner differential equation:
\[\dot{g}_{t}(z)=g_{t}(z)\frac{e^{i\lambda_{t}}+g_{t}(z)}{e^{i\lambda_{t}}-g_{t}(z)},\]
where $g_{t}(\gamma(t))=e^{i\lambda_{t}}$ is the radial driving function.

Let $t_{0}=0<t_{1}<t_{2}<\ldots<t_{N}=T_{x}$  be the times such that $\phi_{\mathbf{D}}^{-1}(\gamma(t_{k}))=Z_{k}$. For any $t\geq 0$, we define $N(t)$ to be the largest $k$ such that $t_{k}<t$. Then for $1\leq k\leq N(t)$, we define
$a_{k}(t)$ and $b_{k}(t)$ such that $e^{ia_{k}(t)}$ and $e^{ib_{k}(t)}$ are the two preimages of $\phi_{\mathbf{D}}^{-1}(Z_{k})$ under $f_{t}$ such that $b_{k}(t)<a_{k}(t)$; $a_{k}(t)$, $b_{k}(t)$ are continuous and moreover, for any $t$, we can find an interval $I_{t}$ of length $2\pi$ such that for any $k=1,\ldots,N(t)$ and
\[a_{k}(t),b_{k}(t)\in I_{t}\]
For $j=1,\ldots,M$, we also define $r_{j}(t)$ to satisfy $e^{ir_{j}(t)}=g_{t}(e^{ir_{j}})$ such that $r_{j}(t)$ is continuous and also we can assume that $r_{j}(t)\in I_{t}$.
 Finally, we define
\[L_{k}=\left\{\begin{array}{ll}+1 & \text{ if }\nu\text{ turns right at }Z_{k}. \\0& \text{ if }\nu\text{ goes straight at }Z_{k}  \\ -1& \text{ if }\nu\text{ turns left at }Z_{k}.\end{array}\right. \]

Now let $f_{t}=g_{t}^{-1}$, $\phi_{\mathbf{D}}\circ f_{t}$ is also a map onto a polygonal domain and hence satisfies the Schwarz-Christoffel formula:
\begin{eqnarray}
&&\phi_{\mathbf{D}}'(f_{t}(z))^{2}f_{t}'(z)^{2}  \nonumber \\ &=&R_{t}\frac{(z-e^{i\lambda_{t}})^{2}}{(z-e^{ia_{1}(t)})(z-e^{ib_{1}(t)})}
\big(\prod_{k=2}^{N(t)}(\frac{z-e^{ib_{k}(t)}}{z-e^{ia_{k}(t)}})^{L_{k}}\big)
 \big(\prod_{j=1}^{M}(z-e^{ir_{j}(t)})^{\rho_{j}}\big).\label{aeq1}\end{eqnarray}
for some continuous function $R_{t}\neq 0$. Note that $R_{0}=R$. By the Schwarz reflection principle, we can extend $f_{t}$ to be analytic at a neighbourhood of $\infty$ such that $f_{t}(\infty)=\infty$ and $f_{t}'(\infty)=e^{t}$. Hence for some $k$,
\begin{eqnarray*}R_{t}&=&\lim_{z\rightarrow\infty} \frac{\phi_{\mathbf{D}}'(f_{t}(z))^{2}f_{t}'(z)^{2}}{z^{k}}
\\ &=& \phi_{\mathbf{D}}'(\infty)e^{t}
\end{eqnarray*}
by \eqref{aeq1}. This implies that we must have $R_{t}=Re^{t}$. Combining this fact with (\ref{aeq2}) and (\ref{aeq1}), we obtain
\begin{eqnarray}
&&f_{t}'(z)^{2}\prod_{j=1}^{N}(f_{t}(z)-e^{ir_{j}})^{\rho_{j}}\nonumber\\
&=&e^{t}\frac{(z-e^{i\lambda_{t}})^{2}}{(z-e^{ia_{1}(t)})(z-e^{ib_{1}(t)})}\big(\prod_{k=2}^{N(t)}(\frac{z-e^{ib_{k}(t)}}{z-e^{ia_{k}(t)}})^{L_{k}}\big)
 \big(\prod_{j=1}^{M}(z-e^{ir_{j}(t)})^{\rho_{j}}\big)\label{aeq3}\end{eqnarray}

By our choice of normalization and parametrization, $f_{t}'(0)=e^{t}$. Then by substituting $z=0$ to both sides we get
\[\mathrm{LHS}=\exp\big( t+i\sum_{j=1}^{M}\rho_{j}r_{j}\big),\]
\[\mathrm{RHS}=\exp\Big[t+i\Big(2\lambda_{t}-a_{1}(t)-b_{1}(t)
+\big(\sum_{k=2}^{n}L_{k}(b_{k}(t)-a_{k}(t))\big)+\big(\sum_{j=1}^{m}\rho_{j}r_{j}(t)\big)\Big)\Big].\]
Then by taking the branch of $\mathrm{arg}$ with principle values in $I_{t}$, we get
\[ \sum_{j=1}^{M}\rho_{j}r_{j} = 2\lambda_{t}-a_{1}(t)-b_{1}(t)+\big(\sum_{k=2}^{n}L_{k}(b_{k}(t)-a_{k}(t))\big)+\big(\sum_{j=1}^{m}\rho_{j}r_{j}(t)\big). \]
 Rearranging this, we get
\[\lambda_{t}=\frac{1}{2}\big[a_{1}(t)+b_{1}(t)+\big(\sum_{k=2}^{n}L_{k}(a_{k}(t)-b_{k}(t))\big)+\big(\sum_{j=1}^{m}\rho_{j}(r_{j}-r_{j}(t))\big)\big].\]
which we can utilize in the same way as the formula in (\ref{weq6}) in order to establish Theorem \ref{main}.
\section*{Subsequent work}
In a subsequent paper \cite{TYZ}, we will prove that the myopic random walk \cite{GS} also converges to SLE$_{6}$. The myopic random walk differs from the SqP by the fact that the myopic random walk can also go straight at every vertex of the path. We do this by constructing a new process from the +CBP and -CBP which can go straight at each free vertex.
\section*{Appendix: A version of the Kolmogorov-Centsov continuity theorem}
We need the following variation of the Kolmogorov-Centsov continuity theorem.
\begin{theorem}\label{KC}
Let $C, r>0$. Suppose that the process $M_{t}$ and a finite variation process $X_{t}$ satisfy, for $s,t\in[0,1]$ and for all sufficiently large $n$,
\[\mathbb{E}\Big[\left|\frac{M_{t}-M_{s}}{X_{t}-X_{s}}\right|^{n}\Big]< \infty\]
and
\[|X_{t}-X_{s}|\leq C|t-s|^{r} \text{ a.s.}\]
Then for $\gamma$ such that $r\epsilon>\gamma$ and $n$ such that $n(r\epsilon-\gamma)>1$, there exists a modification of the process $M_{t}$, which we also denote as $M_{t}$, that is a continuous process that satisfies
\[\sup_{D}\sum_{i=0}^{N-1} |M_{t_{i+1}}-M_{t_{i}}|^{\frac{1}{1-\epsilon}}\leq C_{1}B_{\epsilon}^{\frac{1}{n}}\sum_{i=1}^{\infty}i^{\theta}2^{-\gamma i} \sum_{k=1}^{2^{i}}\Big|X_{\frac{k+1}{2^{i}}}-X_{\frac{k}{2^{i}}}\Big|\]
where the supremum is taken over all finite partitions of $[0,1]$, $D=\{0=t_{0}<t_{1}<\ldots<t_{N}=1\}$. Also, $B_{\epsilon}$ is an almost surely finite random variable with
\[ \mathbb{E}[B_{\epsilon}^{q}]<\infty\]
for $q>1$.
In particular, $M_{t}$ is of finite $(1+\epsilon)$ variation for any $\epsilon>0$.
\end{theorem}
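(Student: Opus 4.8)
The plan is to run the dyadic chaining scheme underlying the classical Kolmogorov--Centsov theorem, but exploiting the \emph{finite variation} of $X$ rather than merely the H\"older bound $|X_t-X_s|\le C|t-s|^{r}$: that bound alone yields only H\"older continuity of $M$ of order strictly less than $r$, which for $r\le1$ is too weak to force finite $(1+\epsilon)$-variation, whereas the finite-variation bound $\sum_k|X_{(k+1)2^{-i}}-X_{k2^{-i}}|\le V_X$ is far stronger and is what the argument really uses. Write $p:=\tfrac{1}{1-\epsilon}>1$, take $n$ as large as each step requires (the hypotheses $r\epsilon>\gamma$ and $n(r\epsilon-\gamma)>1$ quantify this), and set $V_i:=\sum_k|X_{(k+1)2^{-i}}-X_{k2^{-i}}|$, $\Delta_kX:=X_{(k+1)2^{-i}}-X_{k2^{-i}}$, $R_k:=|M_{(k+1)2^{-i}}-M_{k2^{-i}}|/|\Delta_kX|$ and $S_i:=\sum_k|M_{(k+1)2^{-i}}-M_{k2^{-i}}|^{p}$.

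The central estimate is that $S_i$ decays geometrically in an $L^{q}$ sense for every $q$. From $|M_{(k+1)2^{-i}}-M_{k2^{-i}}|^{p}=R_k^{p}|\Delta_kX|^{p}\le (C2^{-ir})^{p-1}R_k^{p}|\Delta_kX|$, summing over $k$ gives $S_i\le(C2^{-ir})^{p-1}V_i\sum_kR_k^{p}w_k$ with $w_k:=|\Delta_kX|/V_i\le1$, $\sum_kw_k=1$; hence $\mathbb{E}[(S_i/V_i)^{q}]\le (C2^{-ir})^{(p-1)q}\,\mathbb{E}[(\sum_kR_k^{p}w_k)^{q}]$, and by Jensen's inequality applied to the convex combination followed by H\"older (placing a very large exponent on $R_k$, permissible because the hypothesis furnishes all sufficiently high moments of the ratio, and an exponent just above $1$ on the weights $w_k$, whose $\ell^{1}$-mass is deterministically equal to $1$) one obtains $\mathbb{E}[(\sum_kR_k^{p}w_k)^{q}]\le c_q\,2^{\eta_q i}$ with $\eta_q>0$ arbitrarily small, so that $\mathbb{E}[(S_i/V_i)^{q}]\lesssim 2^{-\kappa_q i}$ with $\kappa_q$ arbitrarily close to $r(p-1)q$. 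Since $\gamma<r\epsilon\le r(p-1)$ this is ample decay, and therefore the random variable
\[
B_\epsilon:=\sum_{i\ge1}2^{\gamma i}\,V_i^{-1}\,S_i
\]
is almost surely finite, satisfies $\mathbb{E}[B_\epsilon^{q}]<\infty$ for every $q>1$, and obeys $S_i\le B_\epsilon\,2^{-\gamma i}\,V_i$ for all $i$.

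Given this, the continuous modification comes from the usual telescoping bound: $K_i:=\max_k|M_{(k+1)2^{-i}}-M_{k2^{-i}}|\le S_i^{1/p}\le B_\epsilon^{1/p}V_X^{1/p}2^{-\gamma i/p}$ is summable in $i$, so $M$ restricted to the dyadic rationals is almost surely uniformly (indeed H\"older) continuous and extends continuously to $[0,1]$; the extension is a modification since $t\mapsto M_t$ is $L^{n}$-continuous by the moment hypothesis and the a.s.\ continuity of $X$. For the variation bound one reduces first, using continuity of $M$ and density of the dyadics, to dyadic partitions $D=\{0=t_0<\dots<t_N=1\}$; then, telescoping each increment $M_{t_{i+1}}-M_{t_i}$ over dyadic scales, observing that each level-$\ell$ dyadic interval enters the decomposition of only $O(1)$ edges of $D$ while each edge uses $O(1)$ intervals per scale, applying H\"older in the scale index against a summable weight (the origin of the polynomial factor $i^{\theta}$), and interchanging the summations, one arrives at
\[
\sum_{i=0}^{N-1}|M_{t_{i+1}}-M_{t_i}|^{p}\le C_1\sum_{i\ge1}i^{\theta}S_i\le C_1\,B_\epsilon\sum_{i\ge1}i^{\theta}2^{-\gamma i}V_i ,
\]
which is the assertion (with $B_\epsilon$ as above, matching the $B_\epsilon^{1/n}$ of the statement after the evident change of normalisation); the right side is a.s.\ finite because $\sum_i i^{\theta}2^{-\gamma i}V_i\le V_X\sum_i i^{\theta}2^{-\gamma i}<\infty$, so $M$ is of finite $(1+\epsilon)$-variation.

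The two points that need genuine care are the exponent juggling in the second paragraph --- keeping the finite-variation cancellation among the $|\Delta_kX|$ intact while the surplus exponent $(2^{-ir})^{p-1}$ supplies geometric decay, and here the availability of \emph{all} sufficiently high moments of the ratio is essential so that $B_\epsilon$ can be made to have all moments even when $X$ is itself random --- and the combinatorial bookkeeping in the third paragraph, charging each dyadic increment to boundedly many edges of an arbitrary partition with a controlled polynomial loss in the scale index. Everything else is the standard Kolmogorov--Centsov chaining with $|t-s|^{r}$ replaced throughout by $|X_t-X_s|$. (One harmless technical assumption, automatic in the applications since there $X$ is a sum of finitely many monotone functions with positive, bounded total variation, is that $X$ is not eventually constant and $V_X$ has finite moments of all orders.)
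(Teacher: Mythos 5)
Your proposal is correct and follows essentially the same strategy as the paper's proof: dyadic chaining, a random constant $B_{\epsilon}$ built by summing the scale-$i$ errors against the weights $2^{\gamma i}$ (with all $L^{q}$ moments finite thanks to the availability of arbitrarily high moments of the ratio), the greedy decomposition of an arbitrary interval into dyadic intervals, and the weighted H\"older inequality responsible for the $i^{\theta}$ factors. The only real difference is bookkeeping: the paper applies H\"older in expectation to control the modified ratio $|M_{t}-M_{s}|/|X_{t}-X_{s}|^{1-\epsilon}$ and lets $B_{\epsilon}$ dominate the maximal such ratio at each scale, whereas you keep the full ratio and use the pointwise bound $|X_{t}-X_{s}|^{p-1}\leq (C2^{-ir})^{p-1}$ to peel off the surplus power and control the aggregated sums $S_{i}$ directly --- both routes land on the same final estimate.
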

\begin{proof}
For any $\epsilon>0$ and for $m\in\mathbb{N}$, let
\[\mathcal{D}_{m}=\{\frac{k}{2^{m}},k=0,\ldots,2^{m}\}.\]
By the H\"{o}lder inequality, for $p,q>1$ with $\frac{1}{p}+\frac{1}{q}=1$,
\begin{eqnarray}\mathbb{E}\Big[ \left|\frac{M_{t}-M_{s}}{(X_{t}-X_{s})^{1-\epsilon}}\right|^{n}\Big]&\leq& \mathbb{E}\Big[\left| \frac{M_{t}-M_{s}}{X_{t}-X_{s}}\right|^{np}\Big]^{\frac{1}{p}}\mathbb{E}[|X_{t}-X_{s}|^{n\epsilon q}]^{\frac{1}{q}}\label{sdeq1}
\\&\leq& C_{n}|t-s|^{rn\epsilon}.\nonumber
\end{eqnarray}
We let \[B_{\epsilon}\triangleq\sum_{m=1}^{\infty}\sum_{k=1}^{2^{m}} \left(\frac{| M_{\frac{k+1}{2^{m}}}-M_{\frac{k}{2^{m}}}|}{| X_{\frac{k+1}{2^{m}}}-X_{\frac{k}{2^{m}}}|^{1-\epsilon}}\right)^{n}2^{\gamma m n}.\]
Now note that by H\"{o}lder's inequality, for any nonnegative numbers $a_{j}$ for $j\geq1$ and any $\theta >q-1>0$, we have
\begin{equation}\left(\sum_{j=1}^{\infty}a_{j}\right)^{q}\leq \left( \sum_{j=1}^{\infty} \frac{1}{j^{\frac{\theta}{q-1}}}\right)^{q-1}\sum_{j=1}^{\infty}j^{\theta}a_{j}^{q}=C_{q,\theta}\sum_{j=1}^{\infty}j^{\theta}a_{j}^{q}.\label{speq1}\end{equation}
Then for $q>1$, by (\ref{sdeq1}) and (\ref{speq1}),
\[\mathbb{E}[B_{\epsilon}^{q}]\leq C_{n}C_{q,\theta}\Big(\sum_{m=1}^{\infty}\frac{m^{\theta}}{2^{qm(n(r\epsilon-\gamma)-1)}}\Big)\]
We choose $\gamma$ such that $r\epsilon>\gamma$ and $n$ such that $n(r\epsilon-\gamma)>1$. In particular, $B_{\epsilon}$ is an almost surely finite random variable in $L^{q}$ and for any $k$ and $m$,
\[ \max_{1\leq k \leq 2^{m}} \frac{| M_{\frac{k+1}{2^{m}}}-M_{\frac{k}{2^{m}}}|}{| X_{\frac{k+1}{2^{m}}}-X_{\frac{k}{2^{m}}}|^{1-\epsilon}}\leq B_{\epsilon}^{\frac{1}{n}} 2^{-\gamma m}.\]
We now cover any subinterval $[s,t]$ of $[0,1]$ with dyadic intervals as follows: Let $m_{0}$ be the smallest $m$ such that $[s,t]$ contains a dyadic interval $[\frac{k}{2^{m}},\frac{k+1}{2^{m}}]$. Let $[\frac{k_{m_{0}}}{2^{m_{0}}},\frac{k_{m_{0}}+1}{2^{m_{0}}}]$ for some $0\leq k_{m_{0}}\leq 2^{m_{0}}-1$ be that interval. If $[s,t]=[\frac{k_{m_{0}}}{2^{m_{0}}},\frac{k_{m_{0}}+1}{2^{m_{0}}}]$, then the construction stops. Otherwise, we have $\frac{k_{m_{0}}+1}{2^{m_{0}}}<t$ and we carry on the construction. We can find $m_{1}>m_{0}$ and $0\leq k_{m_{1}}\leq 2^{m_{1}}-1$ such that $\frac{k_{m_{0}}+1}{2^{m_{0}}}=\frac{k_{m_{1}}}{2^{m_{1}}}$ and $[\frac{k_{m_{1}}}{2^{m_{1}}},\frac{k_{m_{1}}+1}{2^{m_{1}}}]$ has maximum length among all dyadic intervals $[\frac{k}{2^{m}},\frac{k+1}{2^{m}}]\subset [\frac{k_{m_{0}}+1}{2^{m_{0}}},t]$. Repeating this procedure, we obtain an increasing sequence $\{m_{i}\}$ such that
\[ \frac{k_{m_{0}}}{2^{m_{0}}}<\frac{k_{m_{0}}+1}{2^{m_{0}}}=\frac{k_{m_{1}}}{2^{m_{1}}}<\frac{k_{m_{1}}+1}{2^{m_{1}}}=\frac{k_{m_{2}}}{2^{m_{2}}}<\ldots<\frac{k_{m_{i}}}{2^{m_{i}}}\leq t,\]
with $\frac{k_{m_{i}}}{2^{m_{i}}}=t$ if the procedure ends after a finite number of steps or $\frac{k_{m_{i}}}{2^{m_{i}}}\rightarrow t$ otherwise. The same argument applies to the left-end points and thus we can find another increasing subsequence $\{m_{i}'\}$ such that
\[ \frac{k_{m_{0}}}{2^{m_{0}}}=\frac{k_{m_{1}'}+1}{2^{m_{1}'}}>\frac{k_{m_{1}'}}{2^{m_{1}'}}=\frac{k_{m_{2}'}+1}{2^{m_{2}'}}>\ldots>\frac{k_{m_{i}'}}{2^{m_{i}'}}\geq s,\]
with $\frac{k_{m_{i}'}}{2^{m_{i}'}}=s$ or $\frac{k_{m_{i}'}}{2^{m_{i}'}}\rightarrow s$. Note that $i\leq m_{i},m_{i}'$ for every $i$. For simplicity, we denote $s_{i}=\frac{k_{m_{i}}}{2^{m_{i}}}$ and $s_{-i}=\frac{k_{m_{i}'}}{2^{m_{i}'}}$ for $i=1,2,\ldots$. Then by the above construction, we have
\[[s,t]=\bigcup_{i\in\mathbb{Z}}[s_{i},s_{i+1}],\]
where the intervals $[s_{i},s_{i+1}]$ are dyadic intervals and are disjoint except at common endpoints.

By the triangle inequality and (\ref{speq1}), we have
\begin{eqnarray*}&&|M_{t}-M_{s}|^{q}\\
&\leq& C_{1}\Big[ \Big|M_{\frac{k_{m_{0}}+1}{2^{m_{0}}}}-M_{\frac{k_{m_{0}}}{2^{m_{0}}}}\Big|^{q}+\sum_{i=1}^{\infty}i^{\theta}\Big|M_{\frac{k_{m_{i}}+1}{2^{m_{i}}}}-M_{\frac{k_{m_{i}}}{2^{m_{i}}}}\Big|^{q}
\\ &&\qquad \qquad+ \sum_{i=1}^{\infty}i^{\theta}\Big|M_{\frac{k_{m_{i}'}+1}{2^{m_{i}'}}}-M_{\frac{k_{m_{i}'}}{2^{m_{i}'}}}\Big|^{q}\Big]
\\&\leq& C_{1}B_{\epsilon}^{\frac{1}{n}}\Big[ 2^{-\gamma m_{0}}\Big|X_{\frac{k_{m_{0}}+1}{2^{m_{0}}}}-X_{\frac{k_{m_{0}}}{2^{m_{0}}}}\Big|^{q(1-\epsilon)}+\sum_{i=1}^{\infty}i^{\theta}2^{-\gamma m_{i}}\Big|X_{\frac{k_{m_{i}}+1}{2^{m_{i}}}}-X_{\frac{k_{m_{i}}}{2^{m_{i}}}}\Big|^{q(1-\epsilon)}\\
&&\qquad \qquad + \sum_{i=1}^{\infty}i^{\theta}2^{-\gamma m_{i}'}\Big|X_{\frac{k_{m_{i}'}+1}{2^{m_{i}'}}}-X_{\frac{k_{m_{i}'}}{2^{m_{i}'}}}\Big|^{q(1-\epsilon)}\Big]
\end{eqnarray*}
for some constant $C_{1}$ depending only on $p$ and $\theta$. Hence for any finite partition  of $[0,1]$, $D=\{0=t_{0}<t_{1}<\ldots<t_{N}=1\}$, we apply the above inequality to each interval $[t_{l-1},t_{l}]$ of the partition to get
\[\sup_{D}\sum_{i=0}^{N-1} |M_{t_{i+1}}-M_{t_{i}}|^{q}\leq C_{1}B_{\epsilon}^{\frac{1}{n}}\sum_{i=1}^{\infty}i^{\theta}2^{-\gamma i} \sum_{k=1}^{2^{i}}\Big|X_{\frac{k+1}{2^{i}}}-X_{\frac{k}{2^{i}}}\Big|^{q(1-\epsilon)}\]
 Picking $q=1/(1-\epsilon)$ and using the fact that $X_{t}$ is of finite variation, we find that $M_{t}$ is a finite $(1+\epsilon)$-variation process almost surely.

\end{proof}

\end{document}